\providecommand{\U}[1]{\protect\rule{.1in}{.1in}}
\newtheorem{theorem}{Theorem}[section]
\theoremstyle{plain}
\newtheorem{corollary}[theorem]{Corollary}
\newtheorem{definition}[theorem]{Definition}
\newtheorem{lemma}[theorem]{Lemma}
\newtheorem{proposition}[theorem]{Proposition}
\numberwithin{equation}{section}
\begin{document}
\title[Tetrahedra and Hilbert Spaces ]{Tetrahedra in Hyperbolic Space and Hilbert Spaces with Pick Kernels\medskip}
\author{Richard Rochberg}
\keywords{hyperbolic space, tetrahedra, complete Pick property, moduli, vertex angles}
\subjclass{46C05, 51M10}
\date{}
\maketitle
\tableofcontents

\begin{abstract}
We study of the relation between the geometry of sets in complex hyperbolic
space and Hilbert spaces with complete Pick kernels. We focus on the geometry
associated with assembling sets into larger sets and of assembling Hilbert
spaces into larger spaces. Model questions include describing the possible
triangular faces of a tetrahedron in $\mathbb{CH}^{n}$ and describing the
three dimensional subspaces of four dimensional Hlbert spaces with Pick
kernels. Our novel technical tool is a complex analog of the cosine of a
vertex angle.

\end{abstract}

\section{Introduction and Summary}

We begin with an informal overview, precise statements are in the later sections.

This work is part of a program introduced in \cite{ARSW} and \cite{Ro} of
studying the close relation between the geometry of finite dimensional
reproducing kernel Hilbert spaces (RKHS) with the complete Pick property (CPP)
and the geometry of finite sets in complex hyperbolic space, $\mathbb{CH}%
^{n},$ and real hyperbolic space, $\mathbb{RH}^{n}.$ We focus on assembly
questions, questions about constructing a set or space from designated smaller
parts. The following two questions, which are essentially equivalent, are
specific examples.

\textbf{\noindent Question 1: }Given four triangles in complex hyperbolic
space, $\mathbb{CH}^{n},$ is there a tetrahedron in hyperbolic space with
faces congruent to those triangles?

In Euclidean space of any dimension the necessary and sufficient condition for
three positive numbers to be the side lengths of a triangle is that they
satisfy the triangle inequality and if that holds then those lengths determine
the triangle up to congruence. On the other hand, given four Euclidean
triangles with matching side lengths there might not be a tetrahedron with
faces congruent to those triangles, or, what is the same thing, edges the same
lengths as the sides of those triangles. For instance the numbers $\left\{
4,4,4,4,4,7\right\}  $ are not the edge lengths of a tetrahedron. A necessary
and sufficient condition for there to be a tetrahedron is the nonnegativity of
the determinant of the associated Cayley-Menger matrix, a matrix with entries
constructed using the side lengths. This is an elegant condition but it is an
inequality for a sixth degree polynomial in the lengths, \cite{WD}.

The complexity of that answer results from the choice of side lengths as data.
If instead we select a vertex and use the lengths of sides meeting at that
vertex and the angular structure at the vertex, either vertex angles or
dihedral angles, as data then the necessary and sufficient conditions are
simpler and also hold for tetrahedra in real hyperbolic space. They are
presented in Section \ref{43}.

In real or complex hyperbolic space the necessary and sufficient condition for
three lengths to be the side lengths of a triangle is that they satisfy the
strong triangle inequality, STI, (\ref{STI}) \cite{Ro}. In $\mathbb{CH}^{1}$
and in $\mathbb{RH}^{n}$, $n\geq1,$ those lengths determine the congruence
class of the the triangle, but that is not true in $\mathbb{CH}^{n}$, $n>1.$
However it was shown by Brehm \cite{Br} that in $\mathbb{CH}^{n}$ the three
side lengths together with a fourth number, for instance the angular
invariant, suffice to specify the congruence class of a triangle, and he gives
the explicit conditions those four numbers must satisfy, we recall the details
in Theorem \ref{2}. In Theorem \ref{reduction} we see that the congruence
class of a tetrahedron in $\mathbb{CH}^{n}$ is determined by nine numbers;
Question 1 asks for the constraints those numbers must satisfy.

Questions about point sets in $\mathbb{CH}^{n}$ are equivalent to questions
about Hilbert spaces with reproducing kernels which have the complete Pick
property. In Section \ref{equivalence} we show that Question 1 is equivalent
to the following: .

\textbf{\noindent Question 2: }Given four three dimensional RKHS with the CPP,
is there a four dimensional space with the CPP whose regular three dimensional
subspaces are rescalings of the given spaces$?$

This work began as an effort to better understand an example due to Quiggin
which shows that natural necessary conditions in Question 2 are not
sufficient. That example is analyzed in Section \ref{quiggen}.

We will describe sets in $\mathbb{CH}^{n}$ using two functionals which are
invariant under hyperbolic isometries, the pseudohyperbolic distance between
pairs of points, $\delta(a,b),$ and $\operatorname*{kos}_{a}(b,c),$ a
functional of triples of points which is a complex analog of the cosine of the
vertex angle at $a.$ The "same" functionals are also defined for tuples of
kernel functions in any RKHS and are invariant under rescaling of the space.
In a RKHS $\delta(a,b)$ is related to the angle between kernel functions and
$\operatorname*{kos}_{a}(b,c)$ describes the geometry of the projection of one
kernel function onto the linear span two others. Accepting the guidance of
Klein's Erlangen Program, these invariant quantities should be regarded as
geometric descriptors of sets in $\mathbb{CH}^{n}$ and of RKHS. In Theorem
\ref{general} we use those functionals to give geometric descriptions of
finite sets in $\mathbb{CH}^{n}$ and of a class of Hilbert spaces. Using those
descriptions we build associated moduli spaces and to study assembly questions.

Here is an overview of the contents. In the next section we give background
information about hyperbolic geometry and about Hilbert spaces with
reproducing kernels; in the section after that we recall results connecting
those topics. In Section 4 we discuss various geometric roles of the
functional $\operatorname*{kos}.$ In Section 5 we describe finite sets $X$ in
$\mathbb{CH}^{n},$ and also an associated class of Hilbert spaces, using a
version of spherical coordinates with values of $\operatorname*{kos}$
substituting for cosines of angles. That description involves the positive
semidefiniteness of a matrix $A,$ $A\succcurlyeq0,$ which has the form $A=$
$\left(  \operatorname*{kos}_{1}(i,j)\right)  .$ Submatrices of $A$ encode the
geometry of subsets of $A$ and Sylvester's criterion lets us recast the fact
that $A\succcurlyeq0$ as statements about those submatrices. We use those
facts to relate the geometry of $X$ to the geometry of its subsets. In Section
6 we specialize the results from Section 5 to four point sets and four
dimensional spaces and answer Questions 1 and 2. In both cases a crucial part
of the answer is a condition of the form $\det A\geq0.$ \ If our set $X$ is
inside a copy of $\mathbb{RH}^{k}$ inside $\mathbb{CH}^{n}$ then our results
specialize as results about sets in real hyperbolic space. There is then a
fundamental simplification, the value of $\operatorname*{kos}$ at vertex
simplifies to the cosine of the vertex angle. We develop that theme in detail
for four point sets in Section \ref{rhk} where we give conditions on the
vertex angles and on the dihedral angles at a vertex of a real hyperbolic
tetrahedron. The brief final section contains a few remarks.

\section{Background}

\subsection{Hyperbolic Geometry\label{hg}}

Our background reference for complex hyperbolic geometry is \cite{Go}. We will
use the ball model of complex hyperbolic space $\mathbb{CH}^{n}.$ In that
model the manifold for $\mathbb{CH}^{n}$ is the unit ball, $\mathbb{B}%
_{n}\subset\mathbb{C}^{n},$ and the geometry is determined by the transitive
group of biholomorphic automorphisms of the ball, $\operatorname*{Aut}%
\mathbb{B}_{n}$. \ For each $\alpha\in\mathbb{B}_{n}$ there is a unique
involution $\phi_{a}\in\operatorname*{Aut}\mathbb{B}_{n}$ which satisfies
$\phi_{a}(a)=0.$ The group $\operatorname*{Aut}\mathbb{B}_{n}$ is generated by
those involutions together with the unitary maps. We will say two sets $Z,$
$W\subset$ $\mathbb{CH}^{n}$ are \textit{congruent}, $Z\sim W,$ if there is a
$\phi\in\operatorname*{Aut}\mathbb{B}_{n}$ with $\phi(Z)=W.$ If the sets are
ordered then, absent other comment, we suppose that $\phi$ respects the
ordering. Congruence is an equivalence relation and we are particularly
interested in congruence equivalence classes.

The pseudohyperbolic metric $\delta$ on $\mathbb{CH}^{n}$ is defined by
$\forall\alpha,\beta\in$ $\mathbb{CH}^{n}$
\begin{equation}
\delta(\alpha,\beta)=\left\vert \phi_{\alpha}(\beta)\right\vert =\left\vert
\phi_{\beta}(\alpha)\right\vert . \label{delta}%
\end{equation}
For $n=1$ the formula is $\delta(\alpha,\beta)=\left\vert \frac{\alpha-\beta
}{1-\bar{\beta}\alpha}\right\vert .$ It satisfies the \textit{strong triangle
inequality}, STI; for $x,y,z\in\mathbb{B}_{n}$
\begin{equation}
\frac{\left\vert \delta(x,z)-\delta(z,y)\right\vert }{1-\delta(x,z)\delta
(z,y)}\leq\delta(x,y)\leq\frac{\delta(x,z)+\delta(z,y)}{1+\delta
(x,z)\delta(z,y)}. \label{STI}%
\end{equation}
Equivalently, $\delta$ is the distance on $\mathbb{B}_{n}$ which satisfies
$\delta(0,z)=\left\vert z\right\vert $ for $z\in\mathbb{B}_{n}$ and is
$\operatorname*{Aut}\mathbb{B}_{n}$ invariant \cite{DW}. It is not a length
metric; the length metric it generates is the Bergman metric for the ball
normalized to agree with the Euclidean metric to second order at the origin.

The space $\mathbb{CH}^{n}$ contains various totally geodesic submanifolds of
interest here. These include the classical geodesics which are totally
geodesic copies of $\mathbb{RH}^{1},$ and also includes totally geodesic
copies of $\mathbb{CH}^{1},$ sometimes called \textit{complex geodesics.
}Every pair of points is contained in a unique classical geodesic which is in
turn contained in a unique complex geodesic. There are also totally geodesic
copies of the real hyperbolic plane $\mathbb{RH}^{2}$ inside $\mathbb{CH}%
^{n}.$ In particular set $\mathbb{R}^{2}=\left\{  \left(  x,y,0,...,0\right)
:x,y\in\mathbb{R}\right\}  \subset$ $\mathbb{C}^{n}$ and $BK_{2}%
=\mathbb{R}^{2}\cap\mathbb{CH}^{n}.$ That intersection is a totally
geodesically embedded submanifold whose induced geometry is that of the
Beltrami-Klein model of $\mathbb{RH}^{2}$ with constant curvature $-1/4.$ The
geometry of $BK_{2}$ is not conformal with the Euclidean geometry of the
containing $\mathbb{R}^{2}.$ However the two geometries are conformally
equivalent at the origin, in particular angles with vertex at the origin have
the same size in both geometries. There is a useful picture of $BK_{2}$ in
\cite[pg. 83]{Go} and there is a discussion of the geometry of that model
(although the version with curvature $-1)$ as well as the more familiar
Poincar\'{e} disk model in Appendices C and B of \cite{J}. The automorphic
images of $BK_{2}$ are also totally geodesically embedded submanifolds of real
dimension two, and they, together with the complex geodesics, are the only such.

Similar statements hold for totally geodesic submanifolds of higher dimension.
In particular there are higher dimensional analogs of $BK_{2}$ and at the
origin the induced geometry on $BK_{r}$ is conformal with the Euclidean
geometry of the containing $\mathbb{R}^{r}.$ More information on these topics
is in \cite{Go} and \cite{BI}.

If $G$ is a complex geodesic and $x\in\mathbb{CH}^{n}$ we define the metric
projection of $x$ onto $G,$ $P_{G}x,$ to be that point in $G$ which is closest
to $x$ in the pseudohyperbolic metric.

For $z,w\in\mathbb{B}_{r}$ we define the \textit{kernel function} $k$ by
\begin{equation}
k_{z}(w)=k(w,z)=\frac{1}{1-\left\langle \left\langle w,z\right\rangle
\right\rangle }. \label{kernel}%
\end{equation}
(We write $\left\langle \left\langle \cdot,\cdot\right\rangle \right\rangle $
for the inner product on $\mathbb{C}^{n}$ to distinguish from the inner
products on the general Hilbert spaces we consider.)

There is a fundamental identity which describes the interaction of the
involutive automorphisms with the kernel functions \cite{Ru}: for any
$y,z,w\in\mathbb{B}_{r}$%

\begin{equation}
k(\phi_{y}(z),\phi_{y}(w))=\frac{k\left(  y,y\right)  ^{1/2}}{k(z,y)}%
\frac{k\left(  y,y\right)  ^{1/2}}{k(y,w)}k(z,w). \label{basic}%
\end{equation}

By a triangle in $\mathbb{CH}^{n}$ we mean an ordered set of three distinct
points called vertices, or those vertices together with the sides, the
geodesic segments connecting the vertices. The length of a side is the
$\delta$ distance between the corresponding vertices. Similarly for
tetrahedra. We do not require any nondegeneracy on the sets, for instance
three points could be in a single geodesic.

\subsection{Hilbert Spaces with Reproducing Kernels\label{HSRK}}

Our background references for Hilbert spaces are \cite{AM}, \cite{Ro},
\cite{ARSW2}. Except for the spaces $DA_{r}$ introduced later in this section,
all the Hilbert spaces in this paper are finite dimensional.

An $n$ - dimensional reproducing kernel Hilbert space, RKHS, is an $n$ -
dimensional Hilbert space $H$ together with a distinguished basis of vectors
called \textit{reproducing kernels, }$RK(H)=\left\{  h_{i}\right\}  _{i=1}%
^{n}. $ For any $v\in H$ we write $\hat{v}$ for its normalized version;
$\hat{v}=v/\left\Vert v\right\Vert .$ For $h_{i},h_{j}\in RK(H)$ we write
$h_{ij}$ for their inner product and $\widehat{h_{ij}}$ for the inner product
of their normalizations.%
\[
h_{ij}=\left\langle h_{i},h_{j}\right\rangle ,\ \ \ \widehat{h_{ij}%
}=\left\langle \widehat{h_{i}},\widehat{h_{j}}\right\rangle =\left\langle
\frac{h_{i}}{\left\Vert h_{i}\right\Vert },\frac{h_{j}}{\left\Vert
h_{j}\right\Vert }\right\rangle .
\]
The Gram matrix of $H$ is the matrix $\operatorname*{Gr}(H)=\left(
h_{ij}\right)  _{i,j=1}^{n}.$

A \textit{regular subspace} of $H$ is a subspace $J$ spanned by a subset $S$
of $RK(H)$ and regarded as a RKHS by setting $RK(J)=S.$

We now recall the Drury-Arveson spaces, $DA_{r}$ (sometimes denoted $H_{r}%
^{2}$ and sometimes called the Hardy spaces$),$ some basic references are
\cite{Ar}, \cite{AM}, and \cite{Sh}. With $k_{z}(\cdot)$ the functions from
(\ref{kernel}), $DA_{r}$ is the infinite dimensional RKHS with kernel
functions $\left\{  k_{z}:z\in\mathbb{B}_{r}=\mathbb{CH}^{r}\right\}  $ and
inner product given by $\left\langle k_{s},k_{t}\right\rangle =k_{s}(t).$ We
are particularly interested in finite dimensional regular subspaces of
$DA_{r}$. For $Z=\left\{  z_{j}\right\}  _{j=1}^{n}\subset$ $\mathbb{CH}^{r}$
let $DA_{r}(Z)$ be the regular subspace of $DA_{r}$ spanned by the kernel
functions $\left\{  k_{z_{i}}\right\}  _{i=1}^{n}.$ We abbreviate them by
$\left\{  k_{i}\right\}  $ and set $k_{ij}=\left\langle k_{i},k_{j}%
\right\rangle .$

If $r^{\prime}>r$ there are natural inclusions of $\mathbb{C}^{r}$,
$\mathbb{B}_{r}$ and $\mathbb{CH}^{r}$ into the corresponding objects with
$r^{\prime}.$ These inclusions interact in harmless ways with the
constructions we have discussed and will discuss. For instance, given
$Z\subset\mathbb{B}_{r},$ the natural inclusion of $\mathbb{B}_{r}$ into
$\mathbb{B}_{r^{\prime}}$ takes $Z$ to a set $Z^{\prime}\subset\mathbb{B}%
_{r^{\prime}}.$ There is then an obvious natural map between $DA_{r}(Z)$ and
$DA_{r^{\prime}}(Z^{\prime})$ which preserves all the structure of interest
here. Going forward we will identify such pairs of sets and of spaces and drop
the subscripts on $DA_{r}$ and $DA_{r}(Z).$ In particular when we consider a
finite set $X$ in some $\mathbb{CH}^{n}$ we will generally suppose $n$ is
sufficiently large and not be more specific.

\subsubsection{Rescaling\label{rescaling}}

Given finite dimensional RKHS, $G,H$ with $RK(H)=\left\{  h_{\alpha}\right\}
_{\alpha\in A},$ $RK(G)=\left\{  g_{\beta}\right\}  _{\beta\in B}$ we say $G $
is a \textit{rescaling }of $H$ if there is a one to one map $\theta$ of $A $
onto $B$ and a nonvanishing complex valued function $\gamma$ defined on $A $
such that for all $\alpha_{1},\alpha_{2}$ in $A.$
\begin{equation}
\left\langle h_{\alpha_{1}},h_{\alpha_{2}}\right\rangle =\left\langle
\gamma(\alpha_{1})g_{\theta(\alpha_{1})},\gamma(\alpha_{2})g_{\theta
(\alpha_{2})}\right\rangle =\gamma(\alpha_{1})\overline{\gamma(\alpha_{2}%
)}\left\langle g_{\theta(\alpha_{1})},g_{\theta(\alpha_{2})}\right\rangle
\end{equation}

\noindent If $A$ and $B$ are ordered then, unless we specify otherwise, we
suppose that $\theta$ is monotone increasing. If $G_{H}$ is a regular subspace
of $G$ and $H\sim G_{H}$ we will write $H\leadsto G$ or $H\leadsto
G_{H}\subset G$ and will say that we have a \textit{rescaling of }%
$H$\textit{\ into }$G$\textit{\ }with image\textit{\ }$G_{H}.$

Rescaling is an equivalence relation between spaces $H$ and $G;$ we denote it
by $H\sim G.$ We are especially interested in rescaling equivalence classes.

\subsubsection{Assuming Irreducibility\textbf{\ }}

In \cite[pg. 79]{AM} a RKHS $H$ is called \textit{irreducible }if no two
elements of $RK(H)$ are parallel and no two are orthogonal. Irreducibility is
preserved by rescaling and is inherited by regular subspaces.

With our definition of finite dimensional RKHS the first condition is
automatic. The second is equivalent to requiring that neither $H$ not any of
its regular subspaces can be written as a nontrivial orthogonal direct sum of
two RKHS. It is also equivalent to the requirement that no entry of
$\operatorname*{Gr}(H)$ is zero.

All the spaces we consider will be assumed irreducible and we abbreviate that
class by RKHSI. For those spaces the definitions of invariants in this section
proceed without exceptions.

If $H$ is a finite dimensional space in RKHSI we will write $H\in\mathcal{RK}$.

\subsubsection{The CPP}

The CPP is a property which some $H\in\mathcal{RK}$ have and some do not.
General information about it is in \cite{AM}, \cite{Sh}, and \cite{ARSW2}. We
will see in Theorem \ref{reduction} that the CPP characterizes the class of
Hilbert spaces for which Question 2 can be studied by focusing on Question 1,
and for our purposes here we could have used the statements of that theorem as
our definition of the CPP. However we briefly recall the classical definition.

A multiplier operator on $H\in\mathcal{RK}$ is a linear operator $M$ whose
adjoint $M^{\ast}$ is diagonalized by the kernel functions of $H$. Passing
from a regular subspace $J$ of $H$ to the larger $H$ corresponds to an
enlargement of the set of kernel functions. Hence the adjoint $M_{J}^{\ast}$
of any multiplier $M_{J}$ on $J$ can easily be extended to an operator
$M_{\mathtt{ext}}^{\ast}$ on $H$ which is diagonalized by the kernel functions
of $H.$ By virtue of that structure $M_{\mathtt{ext}}^{\ast}$ will be the
adjoint of a multiplier operator $M_{\mathtt{ext}}$ on $H,$ and it is then
immediate that $\left\Vert M_{\mathtt{ext}}\right\Vert \geq\left\Vert
M_{J}\right\Vert $. If $H\in\mathcal{RK}$ and for any regular subspace $J$ of
$H$ and any multiplier $M_{J}$ on $J$ we can select $M_{\mathtt{ext}}^{\ast}$
so that $\left\Vert M_{\mathtt{ext}}\right\Vert =\left\Vert M_{J}\right\Vert $
then $H$ is said to have the Pick property. If additionally it is true that
matrices of multiplier operators acting on spaces of tuples of elements of $J$
always have such an extension with equal norm then $H$ is said to have the
complete Pick property, CPP.

If $H\in\mathcal{RK}$ has the CPP then we write $H\in\mathcal{CPP}$.

If $H\in\mathcal{CPP}$ then it is straightforward that any regular subspace of
$H$ also has the CPP. Also it is not hard to see that if $H\sim J$ then
$J\in\mathcal{CPP}$. We will use both facts without mention.

It is fundamental that the spaces $DA$ all have the CPP and hence so do the
finite dimensional regular subspaces $DA(Z).$ In the other direction we see in
Theorem \ref{reduction} that if $H\in\mathcal{CPP}$ then there is a
$Z\subset\mathbb{CH}^{n}$ with $H\sim DA(Z)$. Furthermore, for finite $X,W$ we
have $X\sim W$ if and only if $DA(X)\sim DA(W)$; one direction follows from
earlier remarks and both directions are contained in Theorem \ref{reduction}.

\subsection{Invariants\label{define}}

In this section we suppose $H\in\mathcal{RK}$ and introduce several functions
of tuples of elements from $RK(H).$ The values of the functionals are
determined by entries of the Gram matrix of $H$ and hence are unchanged if $H$
is a regular subspace of some $H^{\prime}\in\mathcal{RK}$ and we regarded the
functionals as acting on elements of $RK(H^{\prime}).$ Also, it will be clear
from their formulas that these particular functionals are invariant under
rescaling of $H.${}

Such functionals can also be regarded as functionals of tuples of points in
$\mathbb{CH}^{n}$ using the following scheme. If, for instance, $F$ is defined
on pairs of kernel functions then for $x,y\in\mathbb{CH}^{n}$ we define $F$
acting on $x,y$ by selecting a finite $X$ containing both points, regarding
$F$ as acting on $RK(DA(X))$ and setting $F(x,y)=F(k_{x},k_{y}).$ By the
previous remarks this definition does not depend on the choice of $X$ and we
will not mention $X$ again. (We introduced $X$ only to avoid using the
infinite dimensional space $DA.)$ \ We noted that automorphisms of
$\mathbb{CH}^{n}$ induce rescalings of the $DA(X)$ and that the functionals we
will consider are rescaling invariant. Hence the functionals, regarded as
acting on tuples of points in $\mathbb{CH}^{n},$ are invariant under
automorphisms; they depend only on the congruence class of the tuple of
arguments. Going forward we will use the same names and notation for the
functionals acting on a $RK(H)$ and for the induced functionals acting on
points in $\mathbb{CH}^{n}.$

$\mathbf{\delta:}$ For $H\in\mathcal{RK}$ and $h_{1},$ $h_{2}\in RK(H)$ we
define $\delta_{H}$ by
\begin{equation}
\delta_{H}(i,j)=\delta_{H}(h_{i},h_{j})=\sqrt{1-\frac{\left\vert
h_{ij}\right\vert ^{2}}{h_{ii}h_{jj}}}=\sqrt{1-|\widehat{h_{ij}}|^{2}}.
\label{distance}%
\end{equation}
This function is a metric on $RK(H)$ \cite[Lemma 9.9]{AM}, \cite{ARSW}.
Clearly $\delta_{H}$ is invariant under rescaling.

The metric $\delta_{DA}$ on $\mathbb{B}_{n}$ equals the pseudohyperbolic
metric $\delta$. To see this use the definition of $k$ to rewrite
(\ref{basic}) as, for $w,x,y,z\in\mathbb{CH}^{n},$
\begin{equation}
1-\frac{k(w,y)k(y,z)}{k(y,y)k(w,z)}=\left\langle \left\langle \phi_{y}%
(z),\phi_{y}(w)\right\rangle \right\rangle . \label{xxx}%
\end{equation}
Taking $z=w$ we see%
\begin{equation}
\delta^{2}(y,w)=\left\vert \phi_{y}(w)\right\vert ^{2}=1-\frac{\left\vert
k(y,w)\right\vert ^{2}}{k(y,y)k(w,w)}=\delta_{DA}^{2}(y,w). \label{2.13}%
\end{equation}
The first equality in (\ref{2.13}) is the definition of $\delta$, the second
is the special case of (\ref{xxx}), the third is the definition of
$\delta_{DA}^{2}.$ Alternatively, for any $z\in\mathbb{B}_{n}$ note that
$\delta(0,z)=\left\vert z\right\vert =\delta_{DA}(0,z)$ and both $\delta$ and
$\delta_{DA}$ are invariant under automorphisms; hence the two metrics are equal.

Going forward we will write $\delta$ for the various $\delta_{DA(Z)}$ and for
the pseudohyperbolic metric. Using (\ref{2.13}) we can express $\delta$ in
terms of coordinates; for $y,w\in\mathbb{B}_{n}$
\begin{equation}
\delta^{2}(y,w)=1-\frac{(1-\left\langle \left\langle y,y\right\rangle
\right\rangle )(1-\left\langle \left\langle w,w\right\rangle \right\rangle
)}{\left\vert 1-\left\langle \left\langle y,w\right\rangle \right\rangle
\right\vert ^{2}}. \label{2.1111}%
\end{equation}

$\mathbf{\alpha:}$ For $H\in\mathcal{RK}$ we define the \textit{angular
invariant} $\alpha$ by, for $k_{1},$ $k_{2},$ $k_{3}\in RK(H)$
\begin{equation}
\alpha(1,2,3)=\alpha(k_{1},k_{2},k_{3})=-\arg\left\langle k_{1},k_{2}%
\right\rangle \left\langle k_{2},k_{3}\right\rangle \left\langle k_{3}%
,k_{1}\right\rangle =-\arg k_{12}k_{23}k_{31}. \label{def ang}%
\end{equation}
\textit{(}In general situations care is needed in selecting a branch of
$\arg.$ However for $k\in RK(DA(Z)),$ $\operatorname{Re}k>0$ and that lets us
avoid problems.) Notice that $\alpha$ satisfies a cocycle identity; if $k_{4}$
is a fourth kernel function then
\begin{equation}
\alpha(1,2,3)-\alpha(2,3,4)+\alpha(3,4,1)-\alpha(4,1,2)=0. \label{kkocycle}%
\end{equation}
We discuss the geometry associated with $\alpha$ in Section \ref{area}. More
about this invariant is in \cite{Go}, \cite{CO}, \cite{C}, \cite{M}.

$\mathbf{kos:}$ For $H\in\mathcal{RK}$ we define $\operatorname*{kos}$, a
functional of triples of kernel functions. For $k_{1},$ $k_{2},$ $k_{3}\in
RK(H)$, $k_{1}\neq k_{2},$ $k_{3},$ set
\begin{equation}
\operatorname*{kos}\nolimits_{k_{1}}(k_{2},k_{3})=\operatorname*{kos}%
\nolimits_{1}(2,3)=\frac{1}{\delta_{12}\delta_{13}}\left(  1-\frac
{k_{21}k_{13}}{k_{11}k_{23}}\right)  , \label{def kos}%
\end{equation}
and note the symmetry $\operatorname*{kos}\nolimits_{1}(2,3)=\overline
{\operatorname*{kos}\nolimits_{1}(3,2)}.\ $

If $H=DA(X)$ for some $X\subset\mathbb{CH}^{n}$ then $\operatorname*{kos}$ is
related to the geometry of $X.$ Recall that for $x\in\mathbb{CH}^{n}$
$\phi_{x}$ is the ball involution which interchanges $x$ and $0.$ We can use
(\ref{xxx}) and (\ref{2.13}) to obtain
\begin{equation}
\operatorname*{kos}\nolimits_{x}(y,z)=\frac{1}{\delta(x,y)\delta
(x,z)}\left\langle \left\langle \varphi_{x}\left(  y\right)  ,\varphi
_{x}\left(  z\right)  \right\rangle \right\rangle =\left\langle \left\langle
\widehat{\varphi_{x}\left(  y\right)  },\widehat{\varphi_{x}\left(  z\right)
}\right\rangle \right\rangle . \label{first kos}%
\end{equation}
In particular, if $x_{1}$ is at the origin then $\delta(x_{1},w)=\left\vert
w\right\vert $ and $\phi_{x_{1}}$ is the identity. In that case
\begin{equation}
\operatorname*{kos}\nolimits_{x_{1}}(y,z)=\left\langle \left\langle \frac
{y}{\left\vert y\right\vert },\frac{z}{\left\vert z\right\vert }\right\rangle
\right\rangle =\left\langle \left\langle \widehat{y},\widehat{z}\right\rangle
\right\rangle . \label{simple kos}%
\end{equation}
(Although $\operatorname*{kos}$ is invariant under automorphisms of
$\mathbb{B}_{n} $ this formula is an inhomogenous representation and is not invariant.)

If the vectors $y$ and $z$ were in $\mathbb{R}^{n}$ this would be the inner
product of unit vectors in $\mathbb{R}^{n}$ and hence would equal the cosine
of the angle between the segments $0y$ and $0z$. That is the source of the
name $\operatorname*{kos}.$

If $\dim H=n$ then for $1\leq s\leq n$ we define the $(n-1)\times(n-1)$
matrices%
\begin{align}
\operatorname*{KOS}(H,s)  &  =\operatorname*{KOS}(\operatorname*{Gr}%
(H),s)=\left(  \operatorname*{kos}\nolimits_{s}(i,j)\right)  _{\substack{i,j=1
\\i,j\neq s}}^{n},\label{KOS}\\
\operatorname*{MQ}(H,s)  &  =\left(  \delta_{si}\delta_{sj}\operatorname*{kos}%
\nolimits_{s}(i,j)\right)  _{\substack{i,j=1 \\i,j\neq s}}^{n}=\left(
1-\frac{k_{is}k_{sj}}{k_{ij}k_{ss}}\right)  _{\substack{i,j=1 \\i,j\neq
s}}^{n} \label{MQx}%
\end{align}
We also write $\operatorname*{KOS}(X,s)$ for $\operatorname*{KOS}(DA(X),s).$

$\qquad\mathbf{X:}$\textbf{\ }In \cite{KR} Kor\'{a}nyi and Reimann introduced
a functional of $4$-tuples of points in $\partial\mathbb{B}_{n},$ the ideal
boundary of $\mathbb{CH}^{n}.$ Their definition extends in a natural way to an
automorphism invariant functional on $4$-tuples of points in $\mathbb{CH}%
^{n},$ and more generally to a rescaling invariant functional of $4$-tuples of
kernel functions in a RKHSI.

\begin{definition}
Suppose $H\in\mathcal{RK}$. Given $\left\{  k_{i}\right\}  _{i=1}^{4}\subset
RK(H)$ the Kor\'{a}nyi-Reimann cross ratio, $\mathbf{X}(k_{1},k_{2}%
,k_{3},k_{4}),$ is defined as%
\[
\mathbf{X}(k_{1},k_{2},k_{3},k_{4})=\frac{k_{31}k_{42}}{k_{32}k_{14}}.
\]

\end{definition}

Although $\mathbf{X}$ is often used when describing and studying the geometry
of sets in $\mathbb{CH}^{n}$, for instance \cite{KR}, \cite{Go}, \cite{CG},
and the invariants we discussed earlier in this section can be defined using
$\mathbf{X,}$ we will not use $\mathbf{X}$ below.

$\operatorname*{Gr}\mathbf{(H):}$ Having discussed several invariants we
should emphasize that for $H\in\mathcal{RK}$ the Gram matrix,
$\operatorname*{Gr}(H)$, is not invariant under rescaling of $H.$ However
$\operatorname*{Gr}(H)$ is stable under passage from $H$ to a regular subspace
$J;$ $\operatorname*{Gr}(J)$ is the submatrix of $\operatorname*{Gr}(H)$
obtained by retaining all and only the rows and columns of $\operatorname*{Gr}%
(H)$ which are built using reproducing kernels of $H$ which are also in $J.$

\subsection{Matrix Notation\label{notation}}

For an $n\times n$ matrix $A$ we write $A\succ0$ if $A$ is positive definite
and $A\succcurlyeq0$ if it is positive semidefinite. We say $B$ is a
\textit{principal submatrix} of $A$ if it is obtained from $A$ by removing
certain rows and also the corresponding columns. Note that if $H\in$
$\mathcal{RK}$ then $J$ is a regular subspace of $H$ if and only
$\operatorname*{Gr}(J)$ is a principal submatrix of $\operatorname*{Gr}(H)$.
We denote the set of all principal submatrices of $A$ by $\mathcal{PS}%
{\large (A).}$ If $B\in\mathcal{PS}{\large (A)}$ and the rows and columns
retained in $B$ are, for some specified $k\leq n,$ those with indices $j,$
$1\leq j\leq k$ then $B$ is said to be a \textit{leading} principal submatrix.
The determinants of those matrices are called \textit{principal minors} and
\textit{leading principal minors }respectively.

\section{The $\mathcal{CPP}$ and Point Sets in $\mathbb{CH}^{n}$}

We will often use a model triangle $\Gamma$ or a model tetrahedron $\Delta$
which have convenient coordinates. Our model triangle is $\Gamma
\subset\mathbb{CH}^{2}$: \
\begin{align}
&  \Gamma=\left\{  x_{1},x_{2},x_{3}\right\}  =\left\{
(0,0\},(a,0),(x,b)\right\}  ,\label{tri}\\
&  a>0,b\geq0\text{, \ }x\in\mathbb{C},\nonumber\\
&  0<a,\text{ }\left\vert x\right\vert ^{2}+b^{2}<1.\nonumber
\end{align}
Our model tetrahedron is $\Delta\subset\mathbb{CH}^{3}$:
\begin{align}
&  \Delta=\left\{  x_{1},x_{2},x_{3},x_{4}\right\}  =\left\{
(0,0,0),(a,0,0),(x,b,0),(y,z,c)\right\}  ,\label{tetra}\\
&  a>0,\text{ }b,c\geq0,\text{ }x,y,z\in\mathbb{C}\text{, \ }\nonumber\\
&  \left\vert a\right\vert ^{2},\text{ }\left\vert x\right\vert ^{2}%
+\left\vert b\right\vert ^{2},\text{ }\left\vert y\right\vert ^{2}+\left\vert
z\right\vert ^{2}+\left\vert c\right\vert ^{2}<1\nonumber
\end{align}

To a three dimensional $H\in\mathcal{RK}$ with $RK(H)=\left\{  h_{i}\right\}
_{i=1}^{3}$ we associate the following data sets:
\begin{align*}
S  &  =\left\{  |\widehat{h_{12}}|,\text{ }|\widehat{h_{23}}|\text{,
}|\widehat{h_{13}}|\text{, }\mathbb{\alpha}_{123}\right\}  ,\\
S^{\prime}  &  =\left\{  \delta_{12},\text{ }\delta_{13},\text{ }\delta
_{23}\text{, }\mathbb{\alpha}_{123}\right\}  ,\\
S^{\prime\prime}  &  =\{\delta_{12},\delta_{13},\operatorname*{kos}%
\nolimits_{1}(2,3)\}.
\end{align*}
And for convenience we set%
\begin{equation}
\Gamma_{abc}=\left\vert \frac{\widehat{h_{ab}}\widehat{h_{bc}}}%
{\widehat{h_{ca}}}\right\vert =\sqrt{\frac{(1-\delta_{H}^{2}(a,b))\,(1-\delta
_{H}^{2}(b,c))}{(1-\delta_{H}^{2}(c,a))}.} \label{ij}%
\end{equation}

\noindent All these quantities are unchanged by rescaling $H.$ Also note that
the $\Gamma^{\prime}s$ can be computed using entries from $\operatorname*{Gr}%
(H),$ or directly from the data in $S$ or in $S^{\prime}.$

The following describes three point sets $X$ in $\mathbb{CH}^{2}$ and the
associated $DA(X)$ spaces.

\begin{theorem}
[\cite{Br} \cite{AM} \cite{Ro}]\label{2}\label{triangle} Given a three
dimensional $H\in\mathcal{RK}$ the following are equivalent:

\begin{enumerate}
\item $H\in\mathcal{CPP}$.

\item There is a three point set $X$ in $\mathbb{CH}^{2}$ with $H\sim DA(X).$

\item There is a $\Gamma$ as in (\ref{tri}) with $H\sim DA(\Gamma).$

\item Let $J$ be the regular subspace of $H$ spanned by $\{h_{1},$ $h_{2}\}.$
Let $M$ be the multiplier on $J$ of norm one specified by the action of its
adjoint,
\begin{align*}
M^{\ast}h_{1}  &  =0,\\
M^{\ast}h_{2}  &  =\delta_{H}(h_{1},h_{2})h_{2};
\end{align*}
then $M$ extends to a multiplier of norm one on $H.$

\item $\operatorname*{KOS}(H,1)\succcurlyeq0$.

\item
\begin{equation}
_{.\text{ }}\left\vert \operatorname*{kos}\nolimits_{1}(2,3)\right\vert \leq1.
\label{kos estimate}%
\end{equation}

\item $S$ and the $\Gamma$'s defined from $S$ using (\ref{ij}) satisfy%
\begin{equation}
\Gamma_{123}+\Gamma_{231}+\Gamma_{312}\leq2\cos\mathbb{\alpha}_{123}.
\label{aaa}%
\end{equation}

\item $S^{\prime}$ and the $\Gamma$'s defined from $S^{\prime}$ using
(\ref{ij}) satisfy (\ref{aaa}).
\end{enumerate}

Furthermore $X$ sits inside a complex geodesic if and only if $\det
\operatorname*{KOS}(H,1)=0,$ or equivalently $\left\vert \operatorname*{kos}%
\nolimits_{1}(2,3)\right\vert =1,$ or the $b$ coordinate of $\Gamma$ in (3)
equals $0.$

Conversely, given

\qquad data $S$ and $\Gamma$'s defined from $S$ using (\ref{ij}) such that
(\ref{aaa}) holds,

or

\qquad data $S^{\prime}$ and $\Gamma$'s defined from $S^{\prime}$ using
(\ref{ij}) such that (\ref{aaa}) holds,

or

\qquad data $S^{\prime\prime}$ $f$or which (\ref{kos estimate}) holds,

there is triangle $X$ in $\mathbb{CH}^{2},$ unique up to congruence, which has
those parameters.
\end{theorem}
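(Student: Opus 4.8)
The plan is to show that each of the three hypotheses amounts to the single statement that the data $S''=\{\delta_{12},\delta_{13},\operatorname{kos}_1(2,3)\}$ satisfies (\ref{kos estimate}), and then to realize exactly this data by a model triangle of the form (\ref{tri}). First I would record the dictionary among the data sets. The equivalence of $S$ and $S'$ is immediate from $\delta_{ij}^2=1-|\widehat{h_{ij}}|^2$. To reach $S''$ I would rewrite (\ref{def kos}) in normalized entries, obtaining
\[
\operatorname{kos}_1(2,3)=\frac{1}{\delta_{12}\delta_{13}}\left(1-\frac{\widehat{h_{21}}\widehat{h_{13}}}{\widehat{h_{23}}}\right)=\frac{1}{\delta_{12}\delta_{13}}\bigl(1-\Gamma_{312}\,e^{i\alpha_{123}}\bigr),
\]
where the modulus of the fraction is $\Gamma_{312}$ by (\ref{ij}) and its argument is $\alpha_{123}$ by (\ref{def ang}). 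This exhibits $\operatorname{kos}_1(2,3)$ as a function of $S'$ (hence of $S$), and conversely $\delta_{12},\delta_{13}$ belong to $S''$ while $\alpha_{123}$ and the remaining modulus are recovered from $\operatorname{kos}_1(2,3)$. Squaring $|\operatorname{kos}_1(2,3)|\le1$, clearing the denominator $\delta_{12}^2\delta_{13}^2=(1-|\widehat{h_{12}}|^2)(1-|\widehat{h_{13}}|^2)$, and using $\Gamma_{123}\Gamma_{231}\Gamma_{312}=\sqrt{(1-\delta_{12}^2)(1-\delta_{13}^2)(1-\delta_{23}^2)}$ turns (\ref{kos estimate}) into the constraint (\ref{aaa}); this is the data-level form of the already established equivalence $(6)\Leftrightarrow(7)\Leftrightarrow(8)$. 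Consequently it suffices to construct, and then to show unique, a triangle realizing prescribed $S''$ data with $|\operatorname{kos}_1(2,3)|\le1$.

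For existence I would build the model directly. Place $x_1=(0,0)$, so that by (\ref{simple kos}) the value $\operatorname{kos}_{x_1}(y,z)$ is the Hermitian inner product $\langle\langle\widehat y,\widehat z\rangle\rangle$ of unit vectors. I then seek $x_2,x_3$ with $|x_2|=\delta_{12}$ and $|x_3|=\delta_{13}$, which forces the two origin-distances, and with $\langle\langle\widehat{x_2},\widehat{x_3}\rangle\rangle=\operatorname{kos}_1(2,3)$, which forces the remaining invariant. Taking $x_2=(\delta_{12},0)$ gives $\widehat{x_2}=(1,0)$; a unit vector $\widehat{x_3}=(p,q)$ has $\langle\langle\widehat{x_2},\widehat{x_3}\rangle\rangle=\overline p$, so I set $p=\overline{\operatorname{kos}_1(2,3)}$ and $q=\sqrt{1-|\operatorname{kos}_1(2,3)|^2}\ge0$, which is real precisely because (\ref{kos estimate}) holds. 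Then
\[
x_3=\delta_{13}(p,q)=\bigl(\delta_{13}\,\overline{\operatorname{kos}_1(2,3)},\,\delta_{13}\sqrt{1-|\operatorname{kos}_1(2,3)|^2}\bigr)
\]
is a model triangle $\Gamma$ as in (\ref{tri}) with $a=\delta_{12}$, $x=\delta_{13}\overline{\operatorname{kos}_1(2,3)}$ and $b=\delta_{13}\sqrt{1-|\operatorname{kos}_1(2,3)|^2}$; the admissibility condition $|x|^2+b^2=\delta_{13}^2<1$ is automatic. By construction the two origin-distances are correct, and (\ref{simple kos}) returns the prescribed $\operatorname{kos}_1(2,3)$, so $\Gamma$ realizes the data.

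For uniqueness, let $X=\{y_1,y_2,y_3\}$ be any triangle with the same $S''$ data. I would apply $\phi_{y_1}\in\operatorname{Aut}\mathbb{B}_n$ to move $y_1$ to the origin; since $\delta$ and $\operatorname{kos}$ are automorphism invariant, the images $y_2',y_3'$ satisfy $|y_2'|=\delta_{12}$, $|y_3'|=\delta_{13}$ and, by (\ref{simple kos}), $\langle\langle\widehat{y_2'},\widehat{y_3'}\rangle\rangle=\operatorname{kos}_1(2,3)$. The stabilizer of the origin is the unitary group; choosing a unitary $U$ with $U\widehat{y_2'}=e_1$ makes the first coordinate of $U\widehat{y_3'}$ equal to $\overline{\operatorname{kos}_1(2,3)}$, and a further unitary fixing $e_1$ rotates the orthogonal part of $U\widehat{y_3'}$, of norm $\sqrt{1-|\operatorname{kos}_1(2,3)|^2}$, to the positive $e_2$ direction. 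After this automorphism $X$ coincides with the model $\Gamma$ built above, so $X\sim\Gamma$; as the parameters $a,x,b$ of $\Gamma$ are determined by the data, the congruence class is unique. The degenerate case $|\operatorname{kos}_1(2,3)|=1$ gives $b=0$ and places $X$ in the complex geodesic through the first coordinate axis, consistent with the statement that $X$ lies in a complex geodesic exactly when $|\operatorname{kos}_1(2,3)|=1$.

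The main obstacle I anticipate is the bookkeeping in the reduction step: verifying that, after squaring and clearing denominators, (\ref{kos estimate}) is exactly (\ref{aaa}) requires the identity $\Gamma_{123}\Gamma_{231}\Gamma_{312}=\sqrt{(1-\delta_{12}^2)(1-\delta_{13}^2)(1-\delta_{23}^2)}$ together with careful tracking of which edge plays which role in each $\Gamma$. The geometric steps are comparatively routine once (\ref{simple kos}) is in hand, but uniqueness does require checking that the residual unitary freedom after normalizing $\widehat{x_2}$ can always be used to fix the phase of the $e_2$-component of $\widehat{x_3}$, so that the data determine the model with no remaining ambiguity.
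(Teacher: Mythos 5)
Your realization-and-uniqueness argument (place $x_1$ at the origin, use (\ref{simple kos}) to read $\operatorname{kos}_1(2,3)$ as an inner product of unit vectors, normalize with the unitary stabilizer of $0$) is correct and is essentially the specialization to triangles of the paper's proof of Theorem \ref{general}; note that the paper itself offers no proof of Theorem \ref{triangle} beyond citing \cite{Br}, \cite{AM}, \cite{Ro}, so you are attempting more than the text does. But what you prove is only a fragment of the statement. You never address the equivalences that carry the analytic weight: that $|\operatorname{kos}_1(2,3)|\leq 1$ implies $H\in\mathcal{CPP}$ (condition (1)), and the norm-one multiplier extension in condition (4) -- these require the Pick-interpolation machinery (McCullough--Quiggin necessity and sufficiency) that the paper imports by citation. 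Even condition (2) needs one more step than you supply: you construct a model $\Gamma$ realizing the $S''$ data of $H$, but to conclude $H\sim DA(\Gamma)$ you must argue that $(\delta_{12},\delta_{13},\operatorname{kos}_1(2,3))$ determine a three-dimensional irreducible RKHS up to rescaling (true, since the phases of the normalized Gram entries can be rescaled away leaving only the moduli and the cocycle $\alpha_{123}$, all recoverable from $S''$), and this is never stated.

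There is also a concrete error at the step you yourself flagged as the main obstacle. Carrying out the squaring honestly, with $\Gamma_{123}\Gamma_{312}=|\widehat{h_{12}}|^{2}$, $\Gamma_{231}\Gamma_{312}=|\widehat{h_{13}}|^{2}$, one finds that $|\operatorname{kos}_1(2,3)|\leq 1$ is equivalent not to (\ref{aaa}) but to
\begin{equation*}
\Gamma_{123}+\Gamma_{231}+\Gamma_{312}\leq 2\cos\alpha_{123}+\Gamma_{123}\Gamma_{231}\Gamma_{312},
\end{equation*}
and the product term -- precisely the quantity $\Gamma_{123}\Gamma_{231}\Gamma_{312}=|\widehat{h_{12}}\widehat{h_{23}}\widehat{h_{31}}|$ appearing in your own identity -- cannot be discarded. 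Indeed (\ref{aaa}) as printed fails for genuine triangles: for $X=\{(0,0),(\tfrac12,0),(0,\tfrac12)\}\subset\mathbb{CH}^{2}$ one computes $\Gamma_{123}=\Gamma_{231}=\tfrac34$, $\Gamma_{312}=1$, $\alpha_{123}=0$, so the left side is $\tfrac52>2$, yet $DA(X)\in\mathcal{CPP}$ and $\operatorname{kos}_1(2,3)=0$; the corrected inequality holds since $2+\tfrac9{16}\geq\tfrac52$. So the printed (\ref{aaa}) is apparently a typo in the statement, and your claim that the computation ``turns (\ref{kos estimate}) into the constraint (\ref{aaa})'' is false as written: a careful execution of your bookkeeping step would have produced the extra term and exposed the discrepancy, which a blind proof must either derive in corrected form or explicitly flag rather than assert away.
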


\begin{proof}
All of this is in the references mentioned except for the statement about $X$
being in a complex geodesic. That fact is implicit in the proof of Theorem 16
in \cite{Ro}.
\end{proof}

Thus each of $S,$ $S^{\prime},$ or $S^{\prime\prime}$ could be used to
describe a triangle. The equivalence of using $S$ or $S^{\prime}$ is clear.
Passing between $S^{\prime}$ and $S^{\prime\prime}$ is described in Section
\ref{invariants}.

Some aspects of the previous theorem extend to larger sets and spaces. The
next result is an amalgam of the fact that up to rescaling the $H\in
\mathcal{CPP}$ are exactly the spaces $DA(X)\ $for $X$ a finite set in
$\mathbb{CH}^{n}$ \cite{AM}, the fact mentioned earlier that automorphisms of
the ball induce rescalings of spaces\ $DA(X),$ and the description of
congruence classes of finite sets in $\mathbb{CH}^{n}$ given in \cite{BE},
\cite{G}, \cite{HS}, and \cite{Ro}.

\begin{theorem}
\label{reduction}An $n$ dimensional $H\in\mathcal{RK}$ satisfies
$H\in\mathcal{CPP}$ if and only if there is an $X=\left\{  x_{i}\right\}
_{1}^{n}\subset\mathbb{CH}^{n-1}\ with$ $H\sim DA(X).$

$Given$ an $n$ dimensional $H^{\prime}\in\mathcal{CPP}$ with $H^{\prime}\sim
DA(X^{\prime})$ for $X^{\prime}=\left\{  x_{i}^{\prime}\right\}  _{1}^{n}$ the
following are equivalent:

\begin{enumerate}
\item $H$ $\sim H^{\prime}.$

\item $X\sim X^{\prime}.$

\item DA($X)\sim DA(X^{\prime}).$

\item All the triangles of $X$ are congruent to the triangles of $X^{\prime}
$; i.e. for $1\leq$ $i,$ $j,$ $k\leq n$ there is a ball automorphisms taking
$\{x_{i},$ $x_{j},$ $x_{k}\}$ to $\{x_{i}^{\prime},$ $x_{j}^{\prime},$
$x_{k}^{\prime}\}.$

\item The triangles of $X$ which have one vertex at $x_{1}$ are congruent to
the corresponding triangles of $X^{\prime}$; for $1<i<j\leq n$ the triangles
$\{x_{1},$ $x_{i},$ $x_{j}\}$ and $\{x_{1}^{\prime},$ $x_{i}^{\prime},$
$x_{j}^{`\prime}\}$ are congruent.

\item The regular three dimensional subspaces of $H$ are rescalings of the
corresponding regular three dimensional subspaces of $H^{\prime}.$

\item The regular three dimensional subspaces of $H$ which contain $k_{x_{i}}
$ are rescalings of the corresponding three dimensional subspaces of
$H^{\prime}.$
\end{enumerate}
\end{theorem}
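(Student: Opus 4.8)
The plan is to establish the opening ``if and only if'' first and then the chain $(1)$--$(7)$, organizing the latter around a single genuinely geometric step. For the equivalence $H\in\mathcal{CPP}\iff H\sim DA(X)$ with $X\subset\mathbb{CH}^{n-1}$, the backward direction is the elementary fact recorded in Section \ref{HSRK}: each $DA$ has the CPP, the CPP passes to regular subspaces, and it is a rescaling invariant. For the forward direction I would cite the characterization of irreducible complete Pick spaces in \cite{AM}, which gives $H\sim DA(W)$ for some finite $W$; the dimension bookkeeping then follows by moving one point of $W$ to the origin with a ball involution and applying a unitary, since the remaining $n-1$ points span a complex-linear subspace of dimension at most $n-1$ and so may be placed inside $\mathbb{CH}^{n-1}$ without disturbing the rescaling class.

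For the equivalences I would first clear away the formal implications. Transitivity of $\sim$, applied to $H\sim DA(X)$ and $H'\sim DA(X')$, gives $(1)\Leftrightarrow(3)$. A ball automorphism carrying $X$ to $X'$ induces, via (\ref{basic}), a rescaling of $DA(X)$ onto $DA(X')$ and restricts to every subconfiguration; this yields $(2)\Rightarrow(3)$, $(2)\Rightarrow(4)$, $(3)\Rightarrow(6)$, together with the trivial restrictions $(4)\Rightarrow(5)$ and $(6)\Rightarrow(7)$. The bridge between the geometric items $(4),(5)$ and the Hilbert-space items $(6),(7)$ is the three-point case: a regular three-dimensional subspace of $DA(X)$ is a $DA(\{x_i,x_j,x_k\})$, and by Theorem \ref{2} two of these are rescalings of one another exactly when the corresponding triangles are congruent. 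Applying this to all triples gives $(4)\Leftrightarrow(6)$, and to all triples through $x_1$ gives $(5)\Leftrightarrow(7)$.

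The substantive step, which I expect to be the main obstacle, is the reconstruction $(5)\Rightarrow(2)$: that congruence of only the triangles sharing the vertex $x_1$ already forces $X\sim X'$. Here I would normalize by automorphisms so that $x_1=x_1'=0$, which is harmless because it preserves both $\sim$-classes and the hypothesis, and reduces the problem to comparing the vectors $\{x_i\}_{i\ge2}$ and $\{x_i'\}_{i\ge2}$ in $\mathbb{B}_{n-1}$. With $x_1=0$, formula (\ref{simple kos}) identifies the invariants $S''$ of the triangle $\{x_1,x_i,x_j\}$ with the lengths $\delta(0,x_i)=\left\vert x_i\right\vert$, $\delta(0,x_j)=\left\vert x_j\right\vert$ and $\operatorname*{kos}\nolimits_{0}(x_i,x_j)=\langle\langle \widehat{x_i},\widehat{x_j}\rangle\rangle$, so hypothesis $(5)$ says precisely that these norms and normalized inner products agree for the two tuples. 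Multiplying the normalized inner products back by the norms shows the full Gram matrices $(\langle\langle x_i,x_j\rangle\rangle)_{i,j\ge2}$ coincide. Since a finite family of vectors in $\mathbb{C}^{n-1}$ is determined up to a unitary by its Gram matrix, and a unitary fixes the origin and restricts to an element of $\operatorname*{Aut}\mathbb{B}_{n-1}$, we conclude $X\sim X'$. This is the reconstruction content drawn from \cite{BE}, \cite{G}, \cite{HS}, \cite{Ro}, and it closes the loop $(5)\Rightarrow(2)\Rightarrow(3)\Rightarrow(6)\Rightarrow(7)\Rightarrow(5)$, into which $(1)$ and $(4)$ are folded by $(1)\Leftrightarrow(3)$ and $(4)\Leftrightarrow(6)$.
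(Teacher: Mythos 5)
Your proposal is correct and takes essentially the route the paper intends: the theorem is stated there as an amalgam of the Agler--McCarthy characterization of complete Pick spaces from \cite{AM}, the fact that ball automorphisms induce rescalings of the spaces $DA(X)$ via (\ref{basic}), and the description of congruence classes by the triangles through a base point from \cite{BE}, \cite{G}, \cite{HS}, \cite{Ro}, and these are exactly your ingredients. Your Gram-matrix reconstruction for $(5)\Rightarrow(2)$ is precisely the argument the paper itself gives later in proving part (3) of Theorem \ref{general}, so nothing needs repair.
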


A consequence is

\begin{corollary}
\label{allthesame}There are bijections between the class of $n$ dimensional
$H\in\mathcal{CPP}$ modulo rescaling, the class of $n$ dimensional
$\mathcal{RK}$ of the form $DA(X)$ modulo rescaling, and the class of $n$
point sets $X $ in $\mathbb{CH}^{k}$ modulo congruence. The correspondences
respect inclusions between spaces and between sets.
\end{corollary}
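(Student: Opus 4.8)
The plan is to deduce the corollary directly from Theorem~\ref{reduction}, which has already done all the substantive work. The statement asserts three bijections and a compatibility with inclusions. The three classes are: (a) $n$-dimensional $H\in\mathcal{CPP}$ modulo rescaling; (b) $n$-dimensional spaces of the form $DA(X)$ modulo rescaling; and (c) $n$-point sets $X$ in $\mathbb{CH}^{k}$ modulo congruence. Since every $DA(X)$ has the CPP and is itself in $\mathcal{RK}$, class (b) is a subclass of class (a); conversely the first assertion of Theorem~\ref{reduction} says that every $H\in\mathcal{CPP}$ satisfies $H\sim DA(X)$ for some $X$. Hence the two rescaling-classes (a) and (b) coincide, giving the first bijection as an identity on equivalence classes.

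\textbf{Constructing the maps between (b) and (c).} I would define the candidate map $\Phi$ from congruence classes of $n$-point sets to rescaling classes of $DA$-spaces by $\Phi([X]) = [DA(X)]$. The content to check is that $\Phi$ is well defined and bijective. Well-definedness is exactly the implication $(2)\Rightarrow(3)$ of Theorem~\ref{reduction} (with $H = DA(X)$, $H' = DA(X')$): if $X\sim X'$ then $DA(X)\sim DA(X')$, so the class $[DA(X)]$ does not depend on the chosen representative. Injectivity is the reverse implication $(3)\Rightarrow(2)$: if $DA(X)\sim DA(X')$ then $X\sim X'$, so distinct congruence classes map to distinct rescaling classes. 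Surjectivity onto class (b) is immediate, since every element of (b) is by definition $[DA(X)]$ for some $X$. Composing with the identification of (a) and (b) gives all three bijections at once; the equivalences $(1)\Leftrightarrow(2)\Leftrightarrow(3)$ in Theorem~\ref{reduction} are precisely what make the triangle of maps commute.

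\textbf{The compatibility with inclusions.} Here I would argue that passing to a subset $Y\subset X$ corresponds on the space side to passing to a regular subspace. By the definition of $DA(X)$ as the span of $\{k_{x_i}\}$, a subset $Y\subset X$ gives $DA(Y)$ as the regular subspace of $DA(X)$ spanned by the corresponding kernels; and by the remark in Section~\ref{notation}, regular subspaces of $H$ correspond exactly to principal submatrices of $\operatorname{Gr}(H)$, hence to subsets on the geometric side. One must also note that rescaling and congruence both respect this restriction: a rescaling of $H$ restricts to a rescaling of each regular subspace (the function $\gamma$ simply restricts to the index subset), and an automorphism taking $X$ to $X'$ restricts to one taking $Y$ to the corresponding $Y'$. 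These observations show the bijections commute with the inclusion maps on both sides.

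\textbf{The main obstacle} is not any single hard inequality---Theorem~\ref{reduction} supplies all the nontrivial equivalences---but rather a bookkeeping issue about the dimension parameter $k$ in class (c). The corollary writes ``$n$ point sets $X$ in $\mathbb{CH}^{k}$'' without pinning $k$, whereas Theorem~\ref{reduction} produces $X\subset\mathbb{CH}^{n-1}$. I would invoke the earlier discussion in Section~\ref{HSRK} that the natural inclusions $\mathbb{CH}^{r}\hookrightarrow\mathbb{CH}^{r'}$ preserve all structure of interest and that we ``identify such pairs of sets'', so that the congruence class of an $n$-point set is independent of the ambient $\mathbb{CH}^{k}$ once $k$ is large enough (indeed $k=n-1$ suffices). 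Making this stabilization explicit---that a congruence class in $\mathbb{CH}^{k}$ for $k\geq n-1$ is well defined and matches the one in $\mathbb{CH}^{n-1}$---is the one place where a small amount of care beyond quoting Theorem~\ref{reduction} is needed, and I would phrase it by appealing to the automorphism-invariance of the defining invariants $\delta$ and $\operatorname{kos}$, which depend only on the Gram data and not on the ambient dimension.
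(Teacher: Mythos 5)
Your proposal is correct and takes the same route as the paper, which states the corollary as an immediate consequence of Theorem \ref{reduction} with no separate proof. The details you supply---using $(2)\Leftrightarrow(3)$ for well-definedness and injectivity of $[X]\mapsto[DA(X)]$, the subset/regular-subspace correspondence for compatibility with inclusions, and the ambient-dimension stabilization from Section \ref{HSRK}---are exactly the bookkeeping the paper leaves implicit.
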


If we use $S,$ $S^{\prime},$ or $S^{\prime\prime}$ to characterize the
triangles in (4) then we obtain $O(n^{3})$ real numbers which describe $X$ up
to congruence. However that list has repetitions and redundancies. Restricting
to the triangles listed in (5) and adjusting for the fact that some side
lengths are listed twice produces a list of $\left(  n-1\right)  ^{2} $
independent real parameters which determine $X$ up to congruence. That number
is optimal; triangles are determined by 4 parameters, tetrahedra by 9. There
are also constraints on the parameters, the inequality (\ref{aaa}) for
triangles is the simplest example. We give analogous constraints for
tetrahedra in Theorem \ref{Q 1} below. Also, noting the previous corollary,
the same parameters (or, perhaps, similarly named parameters) describe spaces
$DA(X)$ and spaces in $\mathcal{CPP}$.

\subsection{Assembly and Coherence}

It may be that we have several spaces $\left\{  J_{i}\right\}  $ and a
rescaling of each with a $H_{i}$ contained in $H;$ $J_{i}\leadsto H_{i}\subset
H.$ If that holds then there are \textit{coherence conditions }connecting the
$\left\{  J_{i}\right\}  $ with each other and the $\left\{  J_{i}\right\}  $
are said to be a \textit{coherent set} of spaces. Informally the conditions
are that subspaces of various $J_{i}$ which are rescalings of the same
subspace of $H$ must be rescalings of each other. Rather than give a detailed
definition we will describe the situation in detail in the context of Question 2.

For any four dimensional $H\in\mathcal{CPP}$ with $RK(H)=\left\{
h_{i}\right\}  _{i=1}^{4}$ denote the four regular three dimensional subspaces
$\left\{  H_{i}\right\}  _{i=1}^{4}$ by
\[
RK(H_{i})=\left\{  h_{r}:1\leq r\leq4,\text{ }r\neq i\right\}  .
\]
Suppose we have four three dimensional spaces $\left\{  J_{i}\right\}
_{i=1}^{4}\subset\mathcal{RK}$. Question 2 asks for necessary and sufficient
conditions on the $\left\{  J_{i}\right\}  _{i=1}^{4}$ to insure that there is
an $H\in\mathcal{CPP}$ with for $1\leq i\leq4,$ $H_{i}\sim J_{i}.$ If there
are such rescalings then we suppose the indices on the $j_{ir}$ have been
chosen so that for each $i,r$ the rescaling of $H_{i}$ and $J_{i}$ matches the
kernel function $h_{r}$ of $H_{i}$ with the kernel function $j_{ir}$ of
$J_{i}.$ Now notice that, given the existence of the rescalings there must be
relationships between some subspaces of the various $\left\{  J_{i}\right\}
.$ The relationships are all of the same form, we will describe one particular
case. The space $J_{1},$ with kernel functions $\left\{  j_{12},j_{13}%
,j_{14}\right\}  ,$ is a rescaling of $H_{1}$ which has kernel functions
$\left\{  h_{2},h_{3},h_{4}\right\}  $ with $j_{1s}$ pairing with $h_{s};$
similarly for $J_{3}$ and $H_{3}$ with $\left\{  j_{31},j_{32},j_{34}\right\}
$ and $\left\{  h_{1},h_{2},h_{4}\right\}  .$ The subspace $J_{13}$ of $J_{1}$
with kernel functions $\left\{  j_{12},j_{14}\right\}  $ and the subspace
$J_{31}$ of $J_{3}$ with kernel functions $\left\{  j_{32},j_{34}\right\}  $
are both rescalings of the subspace of $H_{24}$ with kernel functions
$\left\{  h_{2},h_{4}\right\}  ,$ and hence $J_{13}\sim J_{31}.$ Define
$J_{pq}$ similarly for $1\leq p\neq q\leq4.$ The coherence conditions on the
$\left\{  J_{i}\right\}  $ are the collection of all relations $J_{pq}\sim
J_{qp}$. They are necessary for the rescalings of $\left\{  J_{i}\right\}  $
into $H.$ If the $\left\{  J_{i}\right\}  $ satisfy these conditions, even if
we do not know that there is an $H,$ we write $\left\{  J_{i}\right\}
\rightrightarrows\,??.$ If furthermore we know there is an $H$ we write
$\left\{  J_{i}\right\}  \rightrightarrows H.$ We use the same notation if
there are more $\left\{  J_{i}\right\}  $ of they have larger dimension. In
general the dimension of the overlap sets may be larger than two. (However for
spaces with the CPP Condition (7) of Theorem \ref{reduction} insures that the
conditions on overlap of two and three dimensional sets imply the coherence
conditions for the larger sets.) Thus the statement $\left\{  J_{i}\right\}
\rightrightarrows\,??$ is the statement that there is a description (perhaps
only implicit) of a type of target space $H$ and an assembly scheme for
constructing such an $H$ from overlapping copies of the $\left\{
J_{i}\right\}  $ and, furthermore, the $\left\{  J_{i}\right\}  $ satisfy the
coherence conditions which are necessary for such an assembly to be possible.

If $\left\{  J_{i}\right\}  \subset\mathcal{CPP}$ then by Corollary
\ref{allthesame} these statements about spaces, subspaces, rescalings, values
of invariants and coherence are equivalent to statements about sets in
$\mathbb{CH}^{m},$ subsets, congruences, values of invariants and an
appropriate notion of coherence, and we will use the same language and
notation in that context. For instance given sets $\left\{  Y_{i}\right\}  $
in $\mathbb{CH}^{n} $ we will write $\left\{  Y_{i}\right\}  \rightrightarrows
X$ if there are congruences of each $Y_{i}$ into some $X$ which satisfy a
preassigned scheme saying which points from the $\left\{  Y_{i}\right\}  $ are
to be mapped to which points of $X.$ We write $\left\{  Y_{i}\right\}
\rightrightarrows\,??$ if we do not know there is an $X$ but do have the
congruences among various subsets $\left\{  Y_{i}\right\}  $ which would be
mapped to the same subsets of $X$. Sometimes we will pass between the
formulations with spaces and with sets.

If we have $\left\{  J_{i}\right\}  \rightrightarrows H$ we would like obtain
information about $H$ from the spaces $\left\{  J_{i}\right\}  $ together with
the coherence data $\left\{  J_{i}\right\}  \rightrightarrows\,??.$ We know
from Theorem \ref{reduction} that the values of $\delta_{ij}$ and
$\operatorname*{kos}_{i}(j,k)$ for $H$ describe $H$ up to rescaling and would
like to compute them from the $\left\{  J_{i}\right\}  .$ Given $\left\{
J_{i}\right\}  \rightrightarrows\,??$ we can construct the imputed value of
$\delta_{H}(a,b)$ by selecting any $J_{s}$ whose image under the rescaling
$J_{s}\leadsto H$ contains the kernel functions $h_{a}$ and $h_{b}$ of $RK(H).
$ We write $j_{ra},$ and $j_{rb}$ for the elements of $RK(J_{s})$ which
correspond to $h_{a}$ and $h_{b}$ under that rescaling map and use
$\delta_{J_{r}}(j_{ra},j_{rb})$ as our constructed value. Note that this is
defined even if there is no $H,$ however if there is an $H$ then the rescaling
$J_{s}\leadsto H$ insures that this value is $\delta_{H}(h_{a},h_{b}).$ Also,
if there is another possible choice for $J_{s}$ then the coherence conditions
insure it will produce the same value. If there is no such $J_{s}$ then we
have no candidate for the value $\delta_{H}(a,b).$ The procedure for
constructing our candidate for $\operatorname*{kos}_{a}(b,c)$ is the same
except that we need to select a $J_{s}$ whose image would contain the three
elements of $RK(H)$ with indices $a,b,$ and $c.$

Using the candidate values of $\operatorname*{kos}_{a}(b,c)$ we construct a
matrix, perhaps only partially defined, which we denote $\operatorname*{KOS}%
(\left\{  J_{i}\right\}  ,a).$ If $J_{i}=DA(X_{i})$ we may write
$\operatorname*{KOS}(\left\{  X_{i}\right\}  ,a)$ for $\operatorname*{KOS}%
(\left\{  J_{i}\right\}  ,a).$ If there is an $H$ with $\left\{
J_{i}\right\}  \rightrightarrows H$ then the defined values in this matrix
will equal those in $\operatorname*{KOS}(H,a).$ However $\operatorname*{KOS}%
(\left\{  J_{i}\right\}  ,a)$ is constructed from $\left\{  J_{i}\right\}  $
and the coherence data, without needing $H.$ Hence comparing properties of
$\operatorname*{KOS}(\left\{  J_{i}\right\}  ,a)$ to properties any
$\operatorname*{KOS}(H,a)$ must have is a test of the possibility there is
such an $H.$

\subsection{Equivalence \textbf{of} the Two Questions\label{equivalence}}

Using the previous theorems we can see that the two questions in the
introduction are equivalent. Those theorems give us the following facts:

\begin{enumerate}
\item Given a set of triangles $\left\{  Y_{i}\right\}  _{i=1}^{4}$ in
$\mathbb{CH}^{n}$ there are three dimensional $\left\{  J_{i}\right\}
_{i=1}^{4}\subset\mathcal{CPP}$ such that
\begin{equation}
J_{i}\sim DA(Y_{i}),\text{ }i=1,...,4, \label{3d}%
\end{equation}
In the other direction, given 3 dimensional $\left\{  J_{i}\right\}
_{i=1}^{4}\subset\mathcal{CPP}$ there are $\left\{  Y_{i}\right\}  _{i=1}^{4}$
such that (\ref{3d}) holds. In either case (\ref{3d}) continues to hold if the
$\left\{  J_{i}\right\}  \ $are replaced by rescalings $\left\{  J_{i}%
^{\prime}\right\}  $ or if the $\left\{  Y_{i}\right\}  $ are replaced by
congruent sets $\left\{  Y_{i}^{\prime}\right\}  $.

\item Given a tetrahedron $X$ in $\mathbb{CH}^{n}$ there is an $H\in
\mathcal{CPP}$ such that
\begin{equation}
H\sim DA(X). \label{4d}%
\end{equation}
In the other direction, given $H\in\mathcal{CPP}$ there is an $X$ such that
(\ref{4d}) holds. In either case (\ref{4d}) continues to hold if $H$ is
replaced by a rescaling $H^{\prime}$ or $X$ is replaced by a congruent
$X^{\prime}.$
\end{enumerate}

\begin{proposition}
\label{equivalent}Given triangles $\left\{  Y_{i}\right\}  _{i=1}^{4}$ in
$\mathbb{CH}^{n}$ and Hilbert spaces $\left\{  J_{i}\right\}  _{i=1}%
^{4}\subset\mathcal{CPP}$ which are related as in (\ref{3d}), there is an $X$
with $\left\{  Y_{i}\right\}  \rightrightarrows X$ if and only if there is an
$H\in\mathcal{CPP}$ with $\left\{  J_{i}\right\}  \rightrightarrows H.$ In
that case $H$ and $X$ satisfy (\ref{4d}).
\end{proposition}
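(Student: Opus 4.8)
The plan is to prove the two implications by moving back and forth across the dictionary supplied by Corollary~\ref{allthesame}, using the fact that the correspondence between $n$-dimensional spaces in $\mathcal{CPP}$ (modulo rescaling) and $n$-point sets in $\mathbb{CH}^{k}$ (modulo congruence) respects inclusions. Since the relations $\rightrightarrows$ and $\rightrightarrows??$ are, by their very definitions in the Assembly and Coherence subsection, phrased entirely in terms of rescalings of spaces (equivalently, congruences of sets) among overlapping subobjects, and since Corollary~\ref{allthesame} says these two categories are isomorphic as inclusion-respecting structures, the proposition should reduce to transporting the relevant assembly data through that isomorphism. The symmetry of the statement means I would prove one direction carefully and obtain the converse by running the same argument through the inverse correspondence.

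First I would fix the setup: we are given $\{Y_i\}_{i=1}^4$ and $\{J_i\}_{i=1}^4$ with $J_i\sim DA(Y_i)$. For the forward direction, assume there is an $X$ with $\{Y_i\}\rightrightarrows X$. By definition this means each $Y_i$ admits a congruence onto a three-point subset of the four-point set $X$, respecting a preassigned index-matching scheme. Applying the $DA(\cdot)$ functor and Corollary~\ref{allthesame}, these congruences become rescalings $DA(Y_i)\leadsto DA(X)_i\subset DA(X)$, where $DA(X)_i$ is the regular three-dimensional subspace of $DA(X)$ on the matching triple of kernel functions. Setting $H=DA(X)$, we have $H\in\mathcal{CPP}$ (the spaces $DA(Z)$ always have the CPP), and composing with the rescalings $J_i\sim DA(Y_i)$ gives $J_i\sim DA(Y_i)\sim DA(X)_i=H_i$, i.e. $H_i\sim J_i$ for each $i$. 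It remains to check that $\{J_i\}$ satisfies the coherence conditions $J_{pq}\sim J_{qp}$; but these follow because each $J_{pq}$ is, through the chain of rescalings, a rescaling of the two-dimensional regular subspace of $H$ on the shared pair of kernel functions, and that subspace is the same regardless of which $J_i$ we route through. Hence $\{J_i\}\rightrightarrows H$, and by construction $H=DA(X)$ satisfies~(\ref{4d}).

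The converse runs symmetrically: given $H\in\mathcal{CPP}$ with $\{J_i\}\rightrightarrows H$, Theorem~\ref{reduction} supplies an $X$ with $H\sim DA(X)$, and the rescalings $H_i\sim J_i\sim DA(Y_i)$ transport, via Corollary~\ref{allthesame}, to congruences of each $Y_i$ onto the matching three-point subset of $X$, with the coherence relations $J_{pq}\sim J_{qp}$ translating into the compatibility of these congruences on shared pairs; thus $\{Y_i\}\rightrightarrows X$. The main obstacle I anticipate is not any single hard estimate but rather the bookkeeping of the index-matching conventions: one must verify that the preassigned scheme pairing points of the $Y_i$ with points of $X$ is consistent with the pairing of kernel functions $h_r$ of $H_i$ with the $j_{ir}$ of $J_i$, so that the two notions of coherence genuinely correspond under the functor. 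I would handle this by invoking the stipulations already made in the Assembly subsection (that $\theta$ is monotone and the indices are chosen to match kernel functions) and by checking that a rescaling of $H_i$ onto $J_i$ restricts to a rescaling on each two-dimensional overlap $H_{ij}$—which is exactly the content needed to identify $J_{pq}$ and $J_{qp}$ as rescalings of a common regular subspace of $H$. Once this compatibility is spelled out, both implications are immediate from Corollary~\ref{allthesame}.
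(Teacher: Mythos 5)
Your proof is correct and takes essentially the same route as the paper's: the paper's one-line proof invokes facts (1) and (2) preceding the proposition (i.e., Theorem \ref{reduction} and the rescaling/congruence dictionary) together with the observation that the regular subspaces of $DA(X)$ are exactly the spaces $DA(Y)$ for $Y$ a subset of $X$, which is precisely the transport argument you carry out via Corollary \ref{allthesame}. Your explicit verification of the coherence bookkeeping ($J_{pq}\sim J_{qp}$ via the shared two-dimensional regular subspace of $H$) merely spells out what the paper leaves implicit.
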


\begin{proof}
This follows from statements (1) and (2) above together with the observation
that for a finite $X\subset\mathbb{CH}^{n}$ the regular subspaces of $DA(X)$
are exactly the spaces $DA(Y)$ for $Y$ a subset of $X.$
\end{proof}

In the proposition the assumption that (\ref{3d}) holds insures the $J_{i}%
\in\mathcal{CPP}$. Even with that condition, having $H\in\mathcal{RK}$ and
$\left\{  J_{i}\right\}  \rightrightarrows H$ is not enough to insure that
$H\in\mathcal{CPP}$. This is shown by, for instance, Quiggin's example which
is discussed in Section \ref{quiggen}.

\section{Geometry and $Kos$\label{gk}}

In this section we describe various relations between values of
$\operatorname*{kos}$ and the geometry of sets in $\mathbb{CH}^{n}.$

\subsection{Evaluating $Kos$\label{EK}}

Because the functional $\operatorname*{kos}$ is an automorphism invariant we
can study it for a general triple by first using an automorphism to place our
triple in the configuration $\Gamma$ described in (\ref{tri}): $\left\{
x_{1},x_{2},x_{3}\right\}  =\left\{  \left(  0,0\right)  ,\left(  a,0\right)
,\left(  x,b\right)  \right\}  $. In that case computing using
(\ref{simple kos}) gives
\begin{equation}
\operatorname*{kos}\nolimits_{1}(2,3)=\left\langle \left\langle \frac
{(a,0)}{\left\Vert (a,0)\right\Vert },\frac{\left(  x,b\right)  }{\left\Vert
\left(  x,b\right)  \right\Vert }\right\rangle \right\rangle =\frac{a\bar{x}%
}{\left\Vert x_{2}\right\Vert \left\Vert x_{3}\right\Vert }=\frac{\bar{x}%
}{\left\Vert x_{3}\right\Vert }. \label{computed kos}%
\end{equation}
An invariant formulation of that statement will let us evaluate
$\operatorname*{kos}\nolimits_{1}(2,3)$ for a general triple. Suppose
$\left\{  x_{1},x_{2},x_{3}\right\}  $ is a given triple and $G(1,2)$ is the
complex geodesic which contains $x_{1}$ and $x_{2}$. Recall that $P_{G(1,2)}$
is the metric projection onto $G(1,2).$ Here is the invariant statement.

\begin{proposition}
\label{formula}Set $P_{G(1,2)}x_{3}=y.$ Writing $\operatorname*{kos}%
\nolimits_{x_{1}}(x_{2},x_{3})=re^{i\theta},$ $r>0,$ $-\pi\leq\theta<\pi$ and
$\operatorname*{angle}$ for the hyperbolic angle we have
\begin{align}
r  &  =\frac{\delta(x_{1},y)}{\delta(x_{1},x_{3})},\label{kos1}\\
\theta &  =\operatorname*{angle}(x_{1}x_{2},x_{1}y). \label{kos2}%
\end{align}

\end{proposition}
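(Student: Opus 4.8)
The plan is to exploit the automorphism invariance of $\operatorname*{kos}$ together with the congruence-class description of triples, reducing the general case to an explicit coordinate computation in the model configuration $\Gamma$. First I would apply a ball automorphism $\phi$ carrying $x_1$ to the origin and arranging $\phi(x_2)=(a,0)$ with $a>0$; since every pair of points lies in a unique complex geodesic, the image $\phi(G(1,2))$ becomes the coordinate complex geodesic $\{(w,0):w\in\mathbb{C}\}$. After a further unitary on the remaining coordinates I may write $\phi(x_3)=(x,b)$ as in (\ref{tri}), with $b\ge 0$. Because $\operatorname*{kos}$, $\delta$, the hyperbolic angle, and metric projection onto complex geodesics are all automorphism invariant, it suffices to verify (\ref{kos1}) and (\ref{kos2}) for this normalized configuration.

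In the normalized picture, the metric projection of $(x,b)$ onto $\{(w,0)\}$ is $y=(x,0)$; this is the key geometric identification I would justify, using that the metric projection onto a complex geodesic through the origin (a coordinate complex geodesic) is exactly coordinate truncation, which follows from minimizing $\delta(0,\cdot)=|\cdot|$ restricted to that geodesic after an involution, or more directly from the fact that $\delta$ restricted to such configurations decomposes coordinatewise. Granting $y=(x,0)$, formula (\ref{computed kos}) already gives $\operatorname*{kos}\nolimits_1(2,3)=\bar x/\|x_3\|$. Then $r=|\bar x|/\|x_3\|=|x|/\|(x,b)\|$, and since $\delta(x_1,y)=\delta(0,(x,0))=|x|$ while $\delta(x_1,x_3)=\delta(0,(x,b))=\|(x,b)\|$ by (\ref{2.13}), this is precisely (\ref{kos1}).

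For the phase (\ref{kos2}), I note $\arg\operatorname*{kos}\nolimits_1(2,3)=\arg\bar x=-\arg x$. The segments $x_1x_2$ and $x_1y$ both emanate from the origin inside the complex geodesic $G(1,2)$, which at the origin carries the Euclidean geometry of $\mathbb{C}$ (by the conformality-at-the-origin remark in Section \ref{hg}, since a complex geodesic through $0$ is a copy of $\mathbb{CH}^1$ with its origin-conformal structure). Thus the hyperbolic angle between the segment toward $(a,0)$ (direction $+1$ on the positive real axis) and the segment toward $y=(x,0)$ (direction $\arg x$) equals the Euclidean angle, namely $\arg x - 0 = \arg x$, up to the sign convention built into the definition of $\operatorname*{angle}$. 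Matching the orientation convention so that the measured angle is $-\arg x=\arg\bar x$ yields $\theta=\operatorname*{angle}(x_1x_2,x_1y)$.

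The main obstacle, and the step deserving the most care, is pinning down the two sign/orientation conventions simultaneously: the branch of $\arg$ used in writing $\operatorname*{kos}\nolimits_1(2,3)=re^{i\theta}$ with $-\pi\le\theta<\pi$, and the orientation convention defining the signed hyperbolic angle $\operatorname*{angle}(x_1x_2,x_1y)$. The magnitude claim (\ref{kos1}) is essentially forced once $y=(x,0)$ is established, but (\ref{kos2}) requires that these conventions be chosen compatibly so that the complex conjugation appearing in (\ref{computed kos}) matches the sign of the angle rather than flipping it. I would therefore state the orientation convention for $\operatorname*{angle}$ explicitly (measuring from $x_1x_2$ to $x_1y$ within $G(1,2)$ with its induced orientation as a complex line) and check consistency on the normalized model, where everything is an elementary planar computation. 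A secondary point to verify cleanly is that metric projection onto $G(1,2)$ indeed lands at $(x,0)$; this is where invoking the structure of complex geodesics through the origin, rather than a direct variational computation, keeps the argument short.
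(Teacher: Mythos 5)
Your proposal is correct and takes essentially the same route as the paper's own proof: reduce by an automorphism to the model configuration $\Gamma$ of (\ref{tri}), identify the metric projection onto $G(1,2)$ as the coordinate truncation $(x,0)$, read off modulus and phase from (\ref{computed kos}) using conformality at the origin, and then argue invariance, with the angle made meaningful by working inside the complex geodesic with its Poincar\'{e}-disk structure. Your explicit attention to the orientation convention matching $\arg \bar{x}=-\arg x$ with the signed angle is a point the paper leaves implicit, but it does not alter the argument.
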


\begin{proof}
We need to show the formula is correct for $\Gamma$ and that it is invariant.
For $\Gamma$ the complex geodesic $G(1,2)$ is the unit disk in the first
coordinate line and in that case it is elementary to show that the metric
projection of $x_{2}$ onto $G(1,2)$ is $(x,0)$. Also, for angles with vertex
at the origin the Euclidean angle and hyperbolic angle are the same. That is
enough to establish that both (\ref{kos1}) and (\ref{kos2}) are correct for
$\Gamma.$

To see that (\ref{kos1}) is invariant note that the statement $P_{G(1,2)}%
x_{3}=y$ is invariant as is $\delta.$ The equality (\ref{kos2}) is more subtle
because there is no natural definition of the angle of intersection for two
geodesics in $\mathbb{CH}^{n}.$ However the geodesic segments $x_{1}%
P_{G(1,2)}x_{3}$ and $x_{1}x_{2}$ are both in $G(1,2),$ and any complex
geodesic is conformally equivalent to the classical Poincar\'{e} disk which
does carry an invariant notion of angle between intersecting curves. Using
that notion of angle\ we see that (\ref{kos2}) is also invariant.
\end{proof}

In some cases there is a simple relation between the values of
$\operatorname*{kos} $ and the geometry of triangles.

Given a triangle $T=\left\{  x_{1},x_{2},x_{3}\right\}  \subset$
$\mathbb{CH}^{2}$ we use an automorphism to suppose that $T$ is in
$\mathbb{CH}^{2}$ and $x_{1}$ is at the origin. Regard $\mathbb{CH}^{2}$ as
$\mathbb{B}_{2}$ inside $\mathbb{C}^{2},$ denote by $V$ the real linear span
of the points of $T,$ and set $W=V\cap\mathbb{B}_{2}.$ It may be that $W$ is a
totally geodesic submanifold of $\mathbb{CH}^{2}.$ In that case, with the
origin in $W,$ there are three possibilities. First, $W$ may be a classical
geodesic in which case it will be a line segment through the origin. Second,
$W$ may be a complex geodesic which contains the origin. We can suppose it is
the unit disk in the plane of the first coordinate and hence it is the
classical Poincar\'{e} disk and it has constant negative curvature $-1$. The
final possibility is that $W$ is a totally real totally geodesic disk and
hence, after an automorphism, we can suppose it is $BK_{2}=\left\{  \left(
r,s\right)  \in\mathbb{B}_{2}:r,s\in\mathbb{R}\right\}  ,$ the Beltrami-Klein
disk of constant curvature $-1/4.$

In the first case, noting (\ref{kos1}) and (\ref{kos2}) we then see that
$\operatorname*{kos}\nolimits_{1}(2,3)=\pm1.$ The value $-1$ occurs when
$x_{1}$ separates the other two points, the value $+1$ when it does not. In
the second case $x_{3}$ is in $G(1,2)$ and so $P_{G(1,2)}x_{3}=x_{3}.$ From
(\ref{kos1}) we see that $\left\vert \operatorname*{kos}\nolimits_{x_{1}%
}(x_{2},x_{3})\right\vert =1$ and then from (\ref{kos2}) that
$\operatorname*{kos}\nolimits_{1}(2,3)=e^{i\gamma}$ where $\gamma$ is the
Euclidean angle between the segments $x_{1}x_{2}$ and $x_{1}x_{3}.$ Thus
$\gamma$ is the Euclidean and the hyperbolic angle at vertex $x_{1}.$
Similarly the values of $\operatorname*{kos}$ at the other two vertices give
the other angles. The congruence class of a triangle in a plane of constant
negative curvature is determined by its angles and hence in this case also by
the three values of $\operatorname*{kos}$. Finally, in the third case the
triangle is in a totally real vector space and we are in the situation
discussed in Proposition \ref{real kos} below. From (\ref{simple kos}) we see
that $\operatorname*{kos}\nolimits_{x_{1}}(x_{2},x_{3})=\cos\theta$ where
$\theta$ is the hyperbolic angle of the triangle at $x_{1}.$ Similarly for the
other two vertices. Hence, as in the previous case, we know the three angles
of a triangle in a plane of constant negative curvature and that determines
the congruence class of the triangle. Finally, looking backward we see that
the first case is the second and third cases holding simultaneously.

In each of these cases the argument can be reversed. If $\operatorname*{kos}%
\nolimits_{1}(2,3)=\pm1$ then $W$ is a line through the origin, if $\left\vert
\operatorname*{kos}\nolimits_{x_{1}}(x_{2},x_{3})\right\vert =1$ then
$P_{G(1,2)}x_{3}=x_{3}$ and hence the triangle lies in a complex geodesic, and
finally, noting Proposition \ref{real} below, if $\operatorname*{kos}%
\nolimits_{x_{1}}(x_{2},x_{3})$ is real then after automorphism the triangle
is in the position described.

\subsection{$Kos$ and other Invariants\label{invariants}}

It is a result of Brehm \cite{Br} that equality of the data sets $S^{\prime} $
is a congruence criterion for triangles in $\mathbb{CH}^{n}$. That criterion
and variations are often used in describing the geometry of finite sets in
$\mathbb{CH}^{n},$ \cite{BE}, \cite{HS}, \cite{G}, \cite{CG}, \cite{Ro}; but
here we focus on the congruence criterion given by the data set $S^{\prime
\prime}.$ It is straightforward to pass between $S^{\prime}$ and
$S^{\prime\prime}.$ The parameters are invariant so we can assume we are in
the model case and the triangle is $\Gamma=\left\{  x_{1},x_{2},x_{3}\right\}
=\left\{  \left(  0,0\right)  ,\left(  a,0\right)  ,\left(  x,b\right)
\right\}  .$

First suppose we have the data $S^{\prime\prime},$ that is $a=\delta
(x_{1},x_{2}),$ $\omega=\delta(x_{1},x_{3}),$ and $\operatorname*{kos}%
\nolimits_{1}(2,3)$ are known. Using those values and (\ref{simple kos}) we
also know $a\bar{x}$. Hence using (\ref{2.1111}) we can find the third side
length using \
\begin{equation}
\delta^{2}(x_{2},x_{3})=1-\frac{(1-\left\Vert x_{2}\right\Vert ^{2}%
)(1-\left\Vert x_{3}\right\Vert ^{2})}{\left\vert 1-\left\langle \left\langle
x_{2},x_{3}\right\rangle \right\rangle \right\vert ^{2}}=1-\frac{(1-\alpha
^{2})(1-\omega^{2})}{\left\vert 1-\alpha\bar{x}\right\vert ^{2}}.
\label{length}%
\end{equation}
To finish determining $S^{\prime}$ we need to find $\beta,$ the angular
invariant. For this triangle $\beta=\arg(1-a\bar{x})$ and, as we just
mentioned, $a\bar{x}$ is known; hence $\beta$ is known.

In the other direction, to go from $S^{\prime}$ to $S^{\prime\prime}$ we need
to find $\operatorname*{kos}_{1}(2,3).$ In this case we know the three side
lengths and hence, noting (\ref{length}), we know $\left\vert 1-\alpha\bar
{x}\right\vert .$ We also know the angular invariant $\beta=\arg(1-a\bar{x}).
$ Combining these two we know $a\bar{x}.$ With that information, and using
(\ref{computed kos}), we can find $\operatorname*{kos}_{1}(2,3).$

Given points $\left\{  x_{1},x_{2},x_{3}\right\}  $ we are the describing the
geometry of the intersection of the geodesics $x_{1}x_{2}$ and $x_{1}x_{3}$ at
$x_{1}$ using the complex number $\operatorname*{kos}_{1}(2,3)$. There are
other complex parameters and pairs of real parameters that are used for the
same purpose. For instance in his geometric analyses of the ball model of
$\mathbb{CH}^{n}$ Goldman uses angles, $\phi,\theta$ related to
$\operatorname*{kos}$ by $\cos\phi=\left\vert \operatorname*{kos}%
_{1}(2,3)\right\vert $ and $\cos\theta=\operatorname{Re}\operatorname*{kos}%
_{1}(2,3)$ \cite[pg. 88]{Go}.

\subsection{$Kos$ and Vertices}

For a bivalent vertex in $\mathbb{R}^{n}$ the cosine of the vertex angle
carries all the intrinsic geometric data about the vertex. The geometry of a
trivalent vertex is determined by the geometry of the three component bivalent
vertices. The values of $\operatorname*{kos}$ provide similar information for
vertices in $\mathbb{CH}^{n}.$

First we consider sets in $\mathbb{R}^{n}$. By a \textit{vertex\textit{\ }%
}$\mathbf{V}$\textit{\ }in $\mathbb{R}^{n}$ we mean a collection of two or
more line segments, \textit{rays}, with a common starting point the
\textit{vertex point}, $V,$ also simply called the vertex. We call
$\mathbf{V}$ \textit{bivalent} if it has two rays, \textit{trivalent} if there
are three, \textit{multivalent} in general. We suppose the rays of a bivalent
vertex are ordered. The two rays of a bivalent vertex span an affine plane and
form an angle in that plane, the \textit{vertex angle}. We say two vertices
are (Euclidean) congruent if there is a Euclidean isometry placing the second
vertex point at the same position as the first and so that the initial
segments of the rays of the second coincide with the initial segments of the
rays of the first.

\begin{proposition}
Two bivalent vertices in $\mathbb{R}^{n}$ are congruent if and only if the
cosines of their vertex angles are equal.
\end{proposition}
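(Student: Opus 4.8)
The plan is to observe that, up to congruence, a bivalent vertex is determined by the pair of unit direction vectors of its two rays, and then to reduce the statement to a standard fact about Gram matrices. First I would fix notation: write the first vertex as a point $V$ together with ordered rays whose directions are given by unit vectors $u_{1},u_{2}$, and the second as a point $V'$ with unit direction vectors $u_{1}',u_{2}'$. Because the definition of congruence only requires the \emph{initial} segments of corresponding rays to coincide, the actual lengths of the rays are irrelevant, and a congruence of the two vertices amounts to a Euclidean isometry $\Phi$ with $\Phi(V')=V$ whose linear (orthogonal) part $O$ satisfies $O u_{i}'=u_{i}$ for $i=1,2$. The vertex angle $\theta$ lies in $[0,\pi]$ and satisfies $\cos\theta=\left\langle\left\langle u_{1},u_{2}\right\rangle\right\rangle$, with the analogous statement for $\theta'$.

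For the forward direction I would use that a Euclidean isometry preserves the inner product. If the vertices are congruent, then $\left\langle\left\langle u_{1},u_{2}\right\rangle\right\rangle=\left\langle\left\langle O u_{1}',O u_{2}'\right\rangle\right\rangle=\left\langle\left\langle u_{1}',u_{2}'\right\rangle\right\rangle$, and since cosine is injective on $[0,\pi]$ this gives $\cos\theta=\cos\theta'$. For the converse, equality of the cosines gives $\left\langle\left\langle u_{1},u_{2}\right\rangle\right\rangle=\left\langle\left\langle u_{1}',u_{2}'\right\rangle\right\rangle$, so the ordered pairs $(u_{1},u_{2})$ and $(u_{1}',u_{2}')$ of unit vectors have the same $2\times2$ Gram matrix, namely the matrix with diagonal entries $1$ and off-diagonal entries $c=\cos\theta$. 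I would then invoke the standard linear-algebra fact that two tuples of vectors in $\mathbb{R}^{n}$ with equal Gram matrices differ by an orthogonal transformation, producing an orthogonal $O$ with $O u_{i}'=u_{i}$; composing $O$ with the translation sending $V'$ to $V$ yields the required congruence.

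The only point needing care is the justification of the Gram-matrix fact (equivalently, a Gram--Schmidt plus orthogonal-extension argument), together with a check that it applies in the degenerate configurations where the rays are parallel or antiparallel (angle $0$ or $\pi$), in which the two direction vectors are linearly dependent. No separate treatment is really needed there: the Gram matrix determines the pair up to the action of $O(n)$ regardless of its rank. The proposition is thus genuinely elementary, and its real content is the observation that the single real invariant $\cos\theta$ is a \emph{complete} congruence invariant for bivalent vertices, foreshadowing the role that $\operatorname*{kos}$ is to play for vertices in $\mathbb{CH}^{n}$.
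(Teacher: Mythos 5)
Your proof is correct. A point of comparison: the paper states this proposition without any proof, treating it as an elementary Euclidean fact, so there is no argument of record to match against; the closest thing in the paper is its proof of the complex analog, Proposition \ref{marker}, which proceeds quite differently --- there the author picks points at a small distance $\varepsilon$ on the rays of each vertex and invokes the side--angle--side type congruence criterion (statement (8) of Theorem \ref{triangle}, i.e.\ the data set $S^{\prime\prime}$) to get congruent triangles, and hence congruent vertices. Your route --- reduce to ordered pairs of unit direction vectors, note that congruence forces equal inner products, and conversely that equal $2\times2$ Gram matrices yield an orthogonal map carrying one pair to the other, extended by a translation --- is instead the direct Euclidean analogue of the technique the paper itself uses later in the proof of Theorem \ref{general} and Corollary \ref{vertexx}, where the Gram matrix of the radially projected points $\widehat{X}$ determines the configuration up to a unitary. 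Each approach buys something: your Gram-matrix argument is self-contained linear algebra, handles the degenerate parallel and antiparallel configurations uniformly (the Gram matrix determines the pair up to $O(n)$ regardless of rank, as you note), and makes explicit why $\cos\theta$ is a \emph{complete} invariant; the paper's triangle-based argument for Proposition \ref{marker} has the advantage of transferring verbatim to $\mathbb{CH}^{n}$, where direction vectors and their inner products are not available but $\operatorname*{kos}$ and the $S^{\prime\prime}$ congruence criterion are. Your appeal to injectivity of cosine on $[0,\pi]$ and your observation that only initial segments of rays must coincide (so ray lengths are irrelevant) are both consistent with the paper's definition of congruence of vertices; I see no gap.
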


Suppose now we are given $\mathfrak{S}$, a set of three triangles $\left\{
T_{i}\right\}  _{i=1}^{3}$ each contained in some $\mathbb{R}^{n}.$ Suppose
for $i=1,2,3$ that $T_{i}$ has vertices $\left\{  x_{i1},x_{ia},x_{ib}%
\right\}  ,$ that the Euclidean length of the side $x_{i1}x_{ia}$ is $l_{ia}$
and similarly for $l_{ib};$ and that $\mathbf{W}_{i}$ is the bivalent vertex
in $T_{i}$ with vertex point $x_{i1}$ and rays ordered similarly to the sides
of the triangle; $x_{i1}x_{ia}$ first, $x_{i1}x_{ib}$ second. It may or may
not be possible to join these three triangles as faces of a tetrahedron in
some $\mathbb{R}^{n}.$ That is there may be a tetrahedron $\left\{
y_{1},y_{2},y_{3},y_{4}\right\}  $ in some $\mathbb{R}^{n}$ with the
triangular face $\left\{  y_{1},y_{2},y_{3}\right\}  $ congruent to $T_{1},$
$\left\{  y_{1},y_{3},y_{4}\right\}  $ congruent to $T_{2},$ and $\left\{
y_{1},y_{2},y_{4}\right\}  $ congruent to the triangle $\widetilde{T_{3}}=$
$\left\{  x_{31},x_{3b},x_{3a}\right\}  .$ (That last triangle has the same
vertices as $T_{3}$ but their order is different. That distinction will be
significant when we consider the complex case.) Certainly a necessary
condition for building a tetrahedron is that certain side lengths match.

\begin{definition}
\label{set}The\textit{\ triangles} $\mathfrak{S}$ are said to be a matched set
if there are numbers $\left\{  L_{i}\right\}  _{i=1}^{3}$ such that
\begin{equation}
l_{1b}=l_{2a},=L_{1},\text{ }l_{2b}=l_{3a}=L_{2},\text{ }l_{3b}=l_{1a}=L_{3}.
\label{match}%
\end{equation}

\end{definition}

\begin{proposition}
\label{real tetrahedron}Given a matched set of three Euclidean triangles
$\mathfrak{S}$ the following are equivalent.

\begin{enumerate}
\item The triangles of $\mathfrak{S}$ are congruent to the faces of a tetrahedron.

\item If $\mathfrak{S}^{\prime}$ is another matched set of three triangles
(with associated data denoted by primes) and for $i=1,2,3$ the bivalent vertex
$\mathbf{W}_{i}^{\prime}$ is congruent to the bivalent vertex $\mathbf{W}_{i}$
then the triangles of $\mathfrak{S}^{\prime}$ are congruent to the faces of a
tetrahedron. That is, the previous conclusion holds for any choice of the
lengths $\{L_{i}^{^{\prime}}\}.$

\item There is a trivalent vertex $\mathbf{V}$ whose three component bivalent
vertices are congruent to the three bivalent vertices $\left\{  \mathbf{W}%
_{i}\right\}  _{i=1}^{3}.$

In each case the tetrahedron or the trivalent vertex is unique up to congruence.
\end{enumerate}
\end{proposition}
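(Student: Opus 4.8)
The plan is to set up an explicit dictionary between tetrahedra carrying a distinguished vertex and trivalent vertices, and then read the three-way equivalence off of that dictionary. First I would record the combinatorics at the apex. Writing $\theta_i$ for the vertex angle of $\mathbf{W}_i$ (equivalently, the angle of $T_i$ at $x_{i1}$), the matched set condition (\ref{match}) is exactly what makes the three candidate apex edges well defined: if $\{y_1,y_2,y_3,y_4\}$ is a tetrahedron with $\{y_1,y_2,y_3\}\cong T_1$, $\{y_1,y_3,y_4\}\cong T_2$, and $\{y_1,y_2,y_4\}\cong\widetilde{T_3}$, then the shared edges force $|y_1y_3|=l_{1b}=l_{2a}=L_1$, $|y_1y_4|=l_{2b}=l_{3a}=L_2$, and $|y_1y_2|=l_{1a}=l_{3b}=L_3$, with no conflict. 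The reversal in $\widetilde{T_3}$ is precisely the bookkeeping that makes these apex lengths consistent; since Euclidean vertex angles are symmetric in their two rays, the reversal leaves $\theta_3$ unchanged.

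For (1) $\Leftrightarrow$ (3) I would argue that a tetrahedron together with a choice of apex is literally the same datum as a trivalent vertex whose three rays have been assigned lengths. Placing $y_1$ at the origin and letting $u_2,u_3,u_4$ be the unit vectors along $y_1y_2,y_1y_3,y_1y_4$, these rays form a trivalent vertex $\mathbf{V}$ whose component bivalent vertices $(u_2,u_3),(u_3,u_4),(u_2,u_4)$ have pairwise angles $\theta_1,\theta_2,\theta_3$; by the earlier Proposition on bivalent vertices (congruent iff the cosines of the vertex angles agree) these components are congruent to $\mathbf{W}_1,\mathbf{W}_2,\mathbf{W}_3$, giving (3). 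Conversely, from a trivalent vertex $\mathbf{V}$ with components congruent to the $\mathbf{W}_i$, hence with pairwise ray angles $\theta_i$, I set $y_1=0$, $y_2=L_3u_2$, $y_3=L_1u_3$, $y_4=L_2u_4$; each apex face then has its two edge lengths and included angle prescribed by the corresponding triangle, so SAS yields $\{y_1,y_2,y_3\}\cong T_1$, $\{y_1,y_3,y_4\}\cong T_2$, $\{y_1,y_2,y_4\}\cong\widetilde{T_3}$, which is (1).

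For the remaining implications I would exploit that, by the bivalent Proposition, $\mathbf{W}_i'\cong\mathbf{W}_i$ records only the equality of angles $\theta_i'=\theta_i$ and says nothing about the lengths $L_i'$. Thus (3) $\Rightarrow$ (2): the single trivalent vertex $\mathbf{V}$ furnished by (3) already has components congruent to each $\mathbf{W}_i'$, and re-decorating its rays with the primed lengths via $y_2'=L_3'u_2$, $y_3'=L_1'u_3$, $y_4'=L_2'u_4$ produces, again by SAS, a tetrahedron with faces congruent to the triangles of $\mathfrak{S}'$. The implication (2) $\Rightarrow$ (1) is immediate on taking $\mathfrak{S}'=\mathfrak{S}$, for which the hypothesis of (2) holds trivially. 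Uniqueness then follows from standard Gram-matrix rigidity: three unit vectors with prescribed pairwise inner products $\cos\theta_i$ are determined up to an orthogonal transformation of the ambient space, so $\mathbf{V}$ is unique up to congruence, and since the apex lengths $L_i$ are fixed, so is the tetrahedron.

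The only genuinely delicate point is the labeling bookkeeping in the first paragraph: keeping straight which face contributes which edge and which included angle, and in particular recognizing that the third face must enter as the reversed triangle $\widetilde{T_3}$ rather than $T_3$ in order that the apex edge lengths close up. Everything past that is SAS congruence together with the elementary fact that a finite configuration of vectors is determined by its Gram matrix up to isometry.
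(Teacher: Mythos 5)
Your proof is correct. A point worth noting: the paper itself states Proposition \ref{real tetrahedron} without proof, treating the Euclidean case as classical; the closest thing to "the paper's proof" is its argument for the complex analog, Proposition \ref{vertex}, and your argument runs essentially parallel to that one. In both, the working step is to place the apex at the origin, decorate the rays of the (tri)valent vertex with the prescribed lengths, and invoke SAS (in the paper, the $S^{\prime\prime}$ criterion of Theorem \ref{triangle}) to identify the resulting faces; your bookkeeping of the matching conditions (\ref{match}) and of the reversed triangle $\widetilde{T_{3}}$ is accurate, and the observation that reversal is harmless in the Euclidean case because vertex angles are symmetric in their rays is exactly the right remark. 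The only organizational differences are minor: you establish $(1)\Leftrightarrow(3)$ as a direct dictionary and derive $(2)$ from $(3)$, whereas the paper proves $(1)\Rightarrow(2)$ by rescaling a model tetrahedron and $(3)\Rightarrow(1)$ by decorating the vertex; and for uniqueness you use Gram-matrix rigidity of three unit vectors, which is the natural Euclidean substitute for the paper's appeal to congruence of the faces at the apex via condition (5) of Theorem \ref{reduction}.
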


In sum, if the side lengths match then the possibility of putting the
triangles together is determined by the geometric combinatorics of the vertex
angles, the specific lengths $\left\{  L_{i}\right\}  $ play no role.

Analogous results hold in $\mathbb{CH}^{n}.$ By a \textit{vertex }$\mathbf{V}
$\textit{\ }in $\mathbb{CH}^{n}$ we mean a collection of two or more geodesic
segments, \textit{rays}, $R_{i},$ with a common starting point the
\textit{vertex point}, $V.$ If there are two rays we assume they are ordered.
Again a vertex may be \textit{bivalent, trivalent, }or \textit{multivalent.
}We say two vertices are congruent if there is an automorphism placing the
second vertex point on the first and so that the initial segments of the two
sets of rays overlap.

The geometry of a bivalent vertex in $\mathbb{CH}^{n}$ can be described by two
real numbers or one complex number. We will use the complex quantity
$\operatorname*{kos}\left(  \mathbf{V}\right)  $ which we define to be
$\operatorname*{kos}_{V}(x_{1},x_{2})$ where $V$ is the vertex point and
$x_{i}$ is chosen on the ray $R_{i},$ $i=1,2.$ The formula (\ref{simple kos})
shows that this value does not depend on those choices. Thus if $\left\{
x_{1},x_{2},x_{3}\right\}  $ is a triangle and $\mathbf{V}$ the bivalent
vertex with vertex point $x_{1}$ then $\operatorname*{kos}\left(
\mathbf{V}\right)  =\operatorname*{kos}_{1}(2,3).$

If we regard $\operatorname*{kos}\left(  \mathbf{V}\right)  $ as a substitute
for the cosine of the angle of a bivalent vertex in Euclidean space then
statement (8) in Theorem \ref{triangle} is the analog of the classical
side--angle--side congruence criterion for Euclidean triangles. We also have
analogs of the two previous results.

\begin{proposition}
\label{marker}Two bivalent vertices,$\mathbf{V}$, $\mathbf{W,}$ in
$\mathbb{CH}^{n}$ are congruent if and only if $\operatorname*{kos}\left(
\mathbf{V}\right)  =\operatorname*{kos}\left(  \mathbf{W}\right)  .$
\end{proposition}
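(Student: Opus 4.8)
The plan is to prove congruence of two bivalent vertices $\mathbf{V},\mathbf{W}$ in $\mathbb{CH}^n$ by reducing to the concrete model $\Gamma$ from (\ref{tri}) and the invariance machinery already established. Recall that $\operatorname{kos}(\mathbf{V})$ is defined as $\operatorname{kos}_V(x_1,x_2)$ for any choice of points on the two rays, and the formula (\ref{simple kos}) guarantees this value is independent of that choice. Since the vertex congruence relation is generated by automorphisms of $\mathbb{B}_n$ and $\operatorname{kos}$ is automorphism invariant, the forward direction (congruent implies equal $\operatorname{kos}$) is immediate: if $\phi$ is an automorphism placing $\mathbf{W}$ onto $\mathbf{V}$ with matched initial ray segments, then applying $\phi$ to chosen ray points and invoking invariance gives $\operatorname{kos}(\mathbf{V})=\operatorname{kos}(\mathbf{W})$.

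For the converse, which is the substantive direction, I would first normalize. Using an automorphism, move the vertex point $V$ to the origin; then $\phi_V$ is the identity and the simple formula (\ref{simple kos}) applies. Pick points $x_1,x_2$ on the two rays of $\mathbf{V}$; after the normalization we have $\operatorname{kos}(\mathbf{V})=\langle\!\langle \widehat{x_1},\widehat{x_2}\rangle\!\rangle$, the Hermitian inner product of the two unit vectors. The point is that a bivalent vertex at the origin is determined, up to the unitary stabilizer of the origin, precisely by the pair of directions $\widehat{x_1},\widehat{x_2}\in\partial\mathbb{B}_n$, and the congruence relation at the origin is exactly the action of $U(n)$ on such pairs. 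So the claim reduces to a standard linear-algebra fact: two ordered pairs of unit vectors in $\mathbb{C}^n$ lie in the same $U(n)$-orbit if and only if their Hermitian inner products agree.

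The core step is therefore this unitary-orbit lemma. I would prove it by constructing the unitary explicitly: given pairs $(\widehat{x_1},\widehat{x_2})$ and $(\widehat{w_1},\widehat{w_2})$ with $\langle\!\langle \widehat{x_1},\widehat{x_2}\rangle\!\rangle=\langle\!\langle \widehat{w_1},\widehat{w_2}\rangle\!\rangle$, apply Gram–Schmidt within each two-dimensional span to produce matching orthonormal frames, send one frame to the other, and extend the resulting partial isometry to a full unitary on $\mathbb{C}^n$. The agreement of inner products ensures the Gram matrices of the two pairs coincide, so the map sending $\widehat{x_i}\mapsto\widehat{w_i}$ is inner-product preserving and hence extends to a unitary. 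Conjugating back by the normalizing automorphisms then yields the automorphism of $\mathbb{B}_n$ carrying $\mathbf{W}$ to $\mathbf{V}$ with overlapping initial ray segments, establishing congruence.

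The main obstacle is bookkeeping rather than depth: one must verify that the unitary constructed at the level of direction vectors genuinely induces a vertex congruence in the sense defined, namely that it fixes the origin (so it is an automorphism stabilizing the vertex point) and maps initial ray segments onto initial ray segments respecting the ordering. This is essentially automatic because unitaries fixing $0$ are geodesic-preserving automorphisms of $\mathbb{B}_n$ sending the direction $\widehat{x_i}$ to $\widehat{w_i}$, hence the geodesic ray from $0$ through $x_i$ to the ray through the image; but care is needed to confirm the argument does not tacitly assume the ray points lie at a common distance, which is why the choice-independence of $\operatorname{kos}$ from (\ref{simple kos}) is invoked at the outset. With that observation in hand the converse follows cleanly, and uniqueness of the vertex up to congruence is a restatement of the same orbit description.
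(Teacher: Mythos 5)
Your proof is correct, but it takes a genuinely different route from the paper's. The paper disposes of Proposition \ref{marker} in three lines by reduction to Theorem \ref{triangle}: pick points at a common small distance $\varepsilon$ from each vertex point on all four rays; the two resulting triangles then carry identical data sets $S^{\prime\prime}=(\varepsilon,\varepsilon,\operatorname*{kos})$, so the side--kos--side uniqueness statement of that theorem makes the triangles congruent, and that triangle congruence is in particular a congruence of the vertices. You instead unwind the definitions directly: normalize each vertex point to the origin, where by (\ref{simple kos}) the invariant becomes the Hermitian inner product of the unit direction vectors; note that congruence of vertices based at the origin is precisely the $U(n)$-action on ordered pairs of directions; and invoke the standard Gram-matrix fact that ordered pairs of unit vectors with equal inner products lie in a single unitary orbit. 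In effect you re-prove, in exactly the special case needed, the $S^{\prime\prime}$-uniqueness that the paper imports as a black box from Theorem \ref{triangle} (ultimately Brehm's theorem). What your route buys is self-containedness: the only external inputs are that the stabilizer of $0$ in $\operatorname{Aut}\mathbb{B}_{n}$ consists exactly of the unitaries (Cartan's uniqueness theorem, in \cite{Ru}) and that geodesics through the origin are straight segments, so the argument is elementary linear algebra; it also makes visible why the \emph{ordered} inner product is the right invariant, consistent with the paper's later remark that $\mathbf{V}_{\mathtt{rev}}$ is generally not congruent to $\mathbf{V}$. What the paper's route buys is brevity and uniformity, since the same appeal to Theorem \ref{triangle} powers the neighboring arguments such as Proposition \ref{vertex}. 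The one point you should state explicitly rather than assert is the Cartan fact itself --- that every automorphism of $\mathbb{B}_{n}$ fixing $0$ is unitary --- since your identification of origin-based congruence with the $U(n)$-orbit relation rests entirely on it; with that citation added, your argument is complete, including the check (which you correctly flag) that the choice-independence of $\operatorname*{kos}\left(\mathbf{V}\right)$ from (\ref{simple kos}) frees you from any assumption that the chosen ray points lie at a common distance.
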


\begin{proof}
If the vertices are congruent then the conclusion is clear. In the other
direction, pick $\varepsilon$ small and pick $x$ and $y$ on the two rays of
$\mathbf{V}$ at distance $\varepsilon$ from the vertex point. Pick $u$ and $v$
similarly on the rays of $\mathbf{W.}$ From (8) in Theorem \ref{triangle} we
see that the triangle formed by the vertex point of $\mathbf{V}$ together with
$x$ and $y$ is congruent to the triangle formed by the vertex point of
$\mathbf{W}$ together with the points $u,v.$ That congruence of triangles also
gives the required congruence of the vertices
\end{proof}

There is an interesting contrast between the real and complex cases. For a
bivalent vertex $\mathbf{V}$ let $\mathbf{V}_{\mathtt{rev}}$ be the vertex
obtained from $\mathbf{V}$ by reversing the order of the two rays. For
vertices in $\mathbb{RH}^{n}$ $\mathbf{V}_{\mathtt{rev}}$ is congruent to
\textbf{V. }However if $\mathbf{V\subset}$ $\mathbb{CH}^{n}$ then
$\mathbf{V}_{\mathtt{rev}}$ is congruent to $\mathbf{V}^{\ast},$ the vertex
obtained by conjugating the coordinates of $\mathbf{V,}$ but generally is not
congruent to $\mathbf{V.}$ To see this, compute that $\operatorname*{kos}%
(\mathbf{V}_{\mathtt{rev}}\mathbf{)}=\overline{\operatorname*{kos}\left(
\mathbf{V}\right)  }= $ $\operatorname*{kos}(\mathbf{V}^{\ast}\mathbf{)}$.

Suppose now we are given $\mathfrak{S,}$ a set of three triangles each in
$\mathbb{CH}^{n}$. We continue the earlier notation and terminology, but now
we use the pseudohyperbolic side lengths $\delta(\cdot,\cdot)$. With that
change the analog of Proposition \ref{real tetrahedron} holds.

\begin{proposition}
\label{vertex}Given a matched set of triangles $\mathfrak{S}$ in
$\mathbb{CH}^{n}$ the statements (1), (2), and (3) of Proposition
\ref{real tetrahedron} are equivalent. If the conditions hold then the
congruence class of the tetrahedra and of the trivalent vertex are uniquely determined.
\end{proposition}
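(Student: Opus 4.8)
The plan is to reduce all three conditions to a single positive-semidefiniteness statement about the Hermitian matrix $\operatorname*{KOS}$, exploiting the fact that $\operatorname*{kos}$ records exactly the directional data at a vertex and is insensitive to the edge lengths. First I would normalize by placing the common vertex point at the origin. By (\ref{simple kos}), for a tetrahedron $\{0,y_2,y_3,y_4\}$ with apex at the origin one has $\operatorname*{kos}_0(y_i,y_j)=\langle\langle \widehat{y_i},\widehat{y_j}\rangle\rangle$, so the $3\times3$ matrix $\operatorname*{KOS}(\{0,y_2,y_3,y_4\},1)$ of (\ref{KOS}) is precisely the Gram matrix of the unit direction vectors $\widehat{y_2},\widehat{y_3},\widehat{y_4}$, with $1$'s on the diagonal. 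The same formula shows that a trivalent vertex at the origin is nothing but a triple of rays through the origin, encoded by three unit direction vectors, and the value of $\operatorname*{kos}$ on each component bivalent vertex is the corresponding pairwise inner product.

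Second, I would track the correspondence between the three prescribed bivalent vertices $\mathbf{W}_1,\mathbf{W}_2,\mathbf{W}_3$ and these pairwise inner products. Writing out the congruences of the three faces at the apex to $T_1$, $T_2$ and the reversed triangle $\widetilde{T_3}$, and using that automorphisms preserve $\operatorname*{kos}$, one finds $\operatorname*{kos}(\mathbf{W}_1)=\langle\langle \widehat{y_2},\widehat{y_3}\rangle\rangle$, $\operatorname*{kos}(\mathbf{W}_2)=\langle\langle \widehat{y_3},\widehat{y_4}\rangle\rangle$, and $\operatorname*{kos}(\mathbf{W}_3)=\langle\langle \widehat{y_4},\widehat{y_2}\rangle\rangle$. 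The reversal built into $\widetilde{T_3}$ is exactly what is needed for these three values to occupy the off-diagonal positions of a single Hermitian matrix: since $\operatorname*{kos}_1(i,j)=\overline{\operatorname*{kos}_1(j,i)}$, the matrix $G:=(\operatorname*{kos}_1(i,j))_{i,j\in\{2,3,4\}}$ is automatically Hermitian, with the three prescribed values (one of them conjugated) as entries. This is the step I expect to require the most care, since a misordered or unconjugated entry would destroy Hermiticity; it is also where the complex case genuinely departs from the real one, where $\operatorname*{kos}$ is real and the reversal is invisible.

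Third, I would invoke the standard fact that a triple of unit vectors in $\mathbb{C}^n$ (for $n$ large) with a prescribed Hermitian Gram matrix $G$ exists if and only if $G\succcurlyeq0$, and that such a triple is then unique up to a simultaneous unitary, i.e. up to a congruence fixing the origin. By Theorem \ref{triangle}, each face $\{0,y_i,y_j\}$ is congruent to its prescribed triangle precisely when the data $S''$ agree: two side lengths, supplied by the matching (\ref{match}), together with $\operatorname*{kos}_0(y_i,y_j)$. The side lengths determine the norms $|y_j|$, while the $\operatorname*{kos}$ values determine the off-diagonal entries of $G$. Hence condition (1) holds if and only if $G\succcurlyeq0$, and exactly the same computation shows (3) holds if and only if $G\succcurlyeq0$; the uniqueness-up-to-unitary statement then yields the asserted uniqueness of both the tetrahedron and the trivalent vertex.

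Finally, for the equivalence with (2) I would observe, via Proposition \ref{marker}, that the entries of $G$ depend only on the congruence classes of $\mathbf{W}_1,\mathbf{W}_2,\mathbf{W}_3$ and not at all on the lengths $\{L_i\}$, since $\operatorname*{kos}$ of a bivalent vertex is length-free. Therefore the condition $G\succcurlyeq0$ is unchanged upon passing to any matched set $\mathfrak{S}'$ with congruent bivalent vertices, which gives $(1)\Rightarrow(2)$; taking $\mathfrak{S}'=\mathfrak{S}$ gives $(2)\Rightarrow(1)$, closing the cycle. The genuine content is thus concentrated in the second and third steps, namely the bookkeeping that assembles the three vertex data into a Hermitian $G$ and the identification of realizability with $G\succcurlyeq0$.
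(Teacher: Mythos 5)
Your proposal is correct, but it takes a genuinely different route from the paper's. The paper proves the cycle by direct geometric construction: it notes $(2)\Rightarrow(1)\Rightarrow(3)$ are immediate, proves $(1)\Rightarrow(2)$ by normalizing the given tetrahedron to the model $\Delta$ of (\ref{tetra}) and radially rescaling the vertices $x_{i}\mapsto\gamma_{i}x_{i}$ --- which adjusts the apex edge lengths to the new $\{L_{i}^{\prime}\}$ while visibly leaving the bivalent vertex classes unchanged --- and proves $(3)\Rightarrow(1)$ by placing the trivalent vertex at the origin and choosing a point at distance $L_{i}$ on each ray; in each step the $S^{\prime\prime}$ results of Theorem \ref{triangle} certify the face congruences, and uniqueness is delegated to Condition (5) of Theorem \ref{reduction}. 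You instead collapse all three statements into the single condition $G\succcurlyeq0$ for the Hermitian matrix of prescribed $\operatorname*{kos}$ values (the conjugation of $\operatorname*{kos}(\mathbf{W}_{3})$ forced by the reversed ordering of $\widetilde{T_{3}}$ is exactly right, and is indeed the delicate bookkeeping point), invoking the standard fact that unit vectors in $\mathbb{C}^{n}$ with a prescribed Gram matrix exist if and only if that matrix is positive semidefinite and are then unique up to a unitary, which is an automorphism of $\mathbb{B}_{n}$ fixing the origin. That reduction is precisely the mechanism the paper develops only later, in Theorem \ref{general} and Corollary \ref{vertexx}; your argument in effect proves the three-ray case of those results ahead of time, and there is no circularity since the Gram realization fact is elementary and independent of the paper's later sections. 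What your route buys is uniformity: one length-free invariant criterion governing (1), (2), and (3) at once (the equivalence with (2) becomes a one-line consequence of Proposition \ref{marker}), together with a self-contained uniqueness proof that avoids Theorem \ref{reduction} entirely. What the paper's route buys is economy at this stage of the development --- no linear algebra beyond the triangle theorem, and a scaling trick that exposes the irrelevance of the lengths $\{L_{i}\}$ without any computation. Both arguments ultimately rest on the same two pillars, automorphism invariance of $\operatorname*{kos}$ and the $S^{\prime\prime}$ congruence criterion, so the difference lies in the organizing lemma rather than the underlying geometry.
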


\begin{proof}
It is immediate that $(2)\Longrightarrow(1)\Longrightarrow(3).$ To see that
$(1)\Longrightarrow(2)$ suppose from $(1)$ that we have $\mathfrak{S}$ and the
associated tetrahedron $\Lambda$ and suppose that we are given a new set of
lengths $\{L_{i}^{^{\prime}}\}$ from $(2).$ Use an automorphism to replace
$\Lambda$ with the tetrahedron $\Delta$ with coordinates given by
(\ref{tetra}). For $i=2,3,4$ select $\gamma_{i}>0$ so that the tetrahedron
$\Delta^{\prime}=\{x_{1},\gamma_{2}x_{2},\gamma_{3}x_{3},\gamma_{4}x_{4}\}$
has $\left\Vert \gamma_{i}x_{i}\right\Vert =L_{i}^{\prime}$. The bivalent
vertices of $\Delta^{\prime}$ at the origin are obtained from those of
$\Delta$ by changing the lengths of the rays which does not change the
congruence class of the vertices. Hence those vertices have the desired
congruence classes. Also the lengths of the edges of $\Delta^{\prime}$ which
meet at the origin match the $\left\{  L_{i}^{\prime}\right\}  .$ Thus by the
results on $S^{\prime\prime}$ in Theorem \ref{triangle} the triangles of
$\mathfrak{S}^{\prime}$ are congruent to the faces of $\Delta^{\prime},$
establishing $(2).$ To show that $(3)\Longrightarrow(1)$ first use an
automorphism to place the trivalent vertex $\mathbf{V}$ at the origin with its
rays in the directions of the rays of $\Delta$ of (\ref{tetra}). Next select a
point on each ray whose distance from the origin is the appropriate $L_{i}.$
The triangles with vertex at the origin are, again by Theorem \ref{triangle},
congruent to the triangles of $\mathfrak{S}$ and hence the origin together
with those three new points are the vertices of the tetrahedron required to
show that (1) holds.

For uniqueness, first consider two trivalent vertices $\mathbf{W}$ and
$\mathbf{W}^{\prime}$ which satisfy Condition (3). Pick a small $\varepsilon$
and a pick point on each ray at distance $\varepsilon$ from the vertex. Let
$\Sigma$ be the tetrahedron determined by those four points and let
$\Sigma^{\prime}$ be the similar tetrahedron constructed using $\mathbf{W}%
^{\prime}.$ We will be done if we show $\Sigma$ and $\Sigma^{\prime}$ are
congruent. The argument which shows they are congruent also gives the
uniqueness in statements (1) and (2). First note that the results on
$S^{\prime\prime}$ in Theorem \ref{triangle} insures that the triangular faces
of $\Sigma$ meeting at the vertex point $\mathbf{W}$ are congruent to those in
$\Sigma^{\prime}$ meeting at the vertex point of $\mathbf{W}^{\prime}.$ By
Condition (5) of Theorem \ref{reduction} this is enough to show the tetrahedra
are congruent.
\end{proof}

Thus, as in $\mathbb{R}^{n}$, the possibility of putting the triangles
together is determined by the geometric combinatorics of the vertex angles,
now described by $\operatorname*{kos}.$ Again the lengths $\left\{
L_{i}\right\}  $ play no role.

In the Corollary \ref{vertexx} we will see a similar result for congruence of
multivalent vertices in $\mathbb{CH}^{n}.$

\subsection{$Kos$ and Congruence of Triangles}

Angles are commonly used in discussing congruence of Euclidean triangles but
the angle cosines are an equivalent parameter \ Motivated by the analogy
between $\cos$ and $\operatorname*{kos}$ we briefly look at the role of
$\operatorname*{kos}$ in congruence criteria for triangles in complex
hyperbolic space.

Recall the standard naming conventions for Euclidean triangles. The
abbreviation SAS refers to the length of two sides and the size of the angle
between them, similarly for the other combinations of S and A. The Euclidean
results are that SSS, SAS, ASA, and AAS each give enough information to
determine the congruence class of a triangle, the data SSA can give two
different congruence classes, and AAA only determines the triangle up to
similarity. These and related topics, for Euclidean, real hyperbolic, and
spherical geometries are discussed in detail in Section 7 of \cite{J}.

From \cite{J} we see that the results for triangles in $\mathbb{RH}^{2}$ are
the same as for $\mathbb{R}^{2}$ with one exception, in $\mathbb{RH}^{2}$ the
data set AAA determines the congruence class of a triangle. The version of
$\mathbb{RH}^{2}$ with curvature $-1$ is isometric to $\mathbb{CH}^{1}$ and
knowing the value of $\operatorname*{kos}$ at a vertex of a triangle in
$\mathbb{CH}^{1}$ is equivalent to knowing the angle of the vertex of that
triangle in $\mathbb{RH}^{2}$. Hence all the results mentioned for
$\mathbb{RH}^{2}$ also hold for triangles in $\mathbb{CH}^{1}.${}

The results for triangles in $\mathbb{RH}^{2}$ also hold for triangles in
$\mathbb{RH}^{n}.$ To see this note that any triangle in $\mathbb{RH}^{n}$ is
inside a totally geodesically embedded copy of $\mathbb{RH}^{2}$ and the
analysis can be done in that copy. However the analogous statement in complex
hyperbolic space is not true; not every triangle in $\mathbb{CH}^{n}$ is
contained in a complex geodesic and hence there is no natural notion of the
cosine of the angle at a vertex. We will try to proceed using the value of
$\operatorname*{kos}$ at a vertex as a substitute. A parameter count shows
this situation will be different. We see from Theorem \ref{2} that the
congruence class of a triangle in a complex geodesic is determined by 3 real
parameters, but to specify the class of a general triangle requires 4
parameters. Hence it is not surprising that SSS, which only has three
parameters, does not give a congruence criterion for triangles in
$\mathbb{CH}^{n}.$ To see the failure for triangles in $\mathbb{CH}^{2}$
consider the triangles
\[
T_{t}=\left\{  \left(  0,0\right)  ,\left(  \frac{1}{2},0\right)  ,\left(
\frac{te^{i\theta}}{2},\frac{(2-t^{2})^{1/2}}{2}\right)  \right\}  ,\text{
}1\leq t\leq\sqrt{2},\text{ }\cos\theta=\frac{t^{2}+7}{8t}.
\]
Computation shows they all have the same side lengths, however triangles in
the form (\ref{tri}), as these are, are only congruent if they are identical.

On the other hand the SAS data set for triangles in $\mathbb{CH}^{n}$ does
determine their equivalence class. This is a consequence of Theorem \ref{2}
because the data set SAS is the data set $S^{\prime\prime}$ of that theorem.

In summary, the congruence class of a triangle in $\mathbb{CH}^{1}$ is
determined by each of the data sets mentioned except SSA. In $\mathbb{CH}%
^{n},n>1$ SAS is sufficient to determine the congruence class, neither SSA nor
SSS is sufficient, and the situation with the other data sets, ASA, AAS, and
AAA, is not known.

\subsection{$Kos$ and Area\label{area}}

Let $T=\left\{  x,y,z\right\}  $ be the vertices of a triangle in
$\mathbb{CH}^{1}$ with geodesic segments as sides. The invariant area of $T,$
$\operatorname*{Area}(T),$ can be evaluated using the angular invariant,
2$\alpha(x,y,z)=$ $\operatorname*{Area}(T)$ \cite[1.3.6]{Go}, and by
invariance a similar result holds if $T$ is contained in a complex geodesic.
The next proposition and corollary hold for triangles in a complex geodesic
but for convenience we present it for sets in $\mathbb{CH}^{1}.$

\begin{proposition}
If $T=\left\{  x_{1},x_{2},x_{3}\right\}  \subset\mathbb{CH}^{1}$ then
\begin{equation}
\pi-\arg\left(  \operatorname*{kos}\nolimits_{1}(2,3)\operatorname*{kos}%
\nolimits_{2}(3,1)\operatorname*{kos}\nolimits_{3}(1,2)\right)  =2\alpha
(x_{1},x_{2},x_{3})=\operatorname*{Area}(T) \label{goal2}%
\end{equation}

\end{proposition}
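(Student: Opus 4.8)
The plan is to prove the first equality, $\pi-\arg\!\left(\operatorname*{kos}_{1}(2,3)\operatorname*{kos}_{2}(3,1)\operatorname*{kos}_{3}(1,2)\right)=2\alpha(x_{1},x_{2},x_{3})$; the second equality $2\alpha=\operatorname*{Area}(T)$ is the cited fact \cite[1.3.6]{Go}. Since $\operatorname*{kos}$ and $\alpha$ are automorphism invariant, I place the triple in the disk $\mathbb{B}_{1}=\mathbb{CH}^{1}$ with coordinates $x_{1}=a$, $x_{2}=b$, $x_{3}=c$ and use $k_{ij}=\langle k_{i},k_{j}\rangle=(1-x_{j}\bar{x_{i}})^{-1}$ from (\ref{kernel}). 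Writing each factor as in (\ref{def kos}), its $\delta$-prefactor is a positive real; hence the argument of the product of the three $\operatorname*{kos}$ values equals the argument of the product of the three bracketed factors $w_{1}=1-\frac{k_{21}k_{13}}{k_{11}k_{23}}$, $w_{2}=1-\frac{k_{32}k_{21}}{k_{22}k_{31}}$, $w_{3}=1-\frac{k_{13}k_{32}}{k_{33}k_{12}}$.

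The central computation is to put each $w_{i}$ over a common denominator. A short expansion gives $w_{1}=\dfrac{(a-c)(\bar{a}-\bar{b})}{(1-a\bar{b})(1-c\bar{a})}$, and the cyclic substitution $a\to b\to c\to a$, which sends $w_{1}$ to $w_{2}$ and $w_{2}$ to $w_{3}$, yields the other two. Multiplying the three numerators, the holomorphic factors collect to $(a-c)(b-a)(c-b)=-(a-b)(b-c)(c-a)$ and the antiholomorphic factors to $\overline{(a-b)(b-c)(c-a)}$, so the total numerator is the negative real number $-\lvert(a-b)(b-c)(c-a)\rvert^{2}$. The three denominators multiply to $D^{2}$ with $D=(1-a\bar{b})(1-b\bar{c})(1-c\bar{a})$. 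Thus $w_{1}w_{2}w_{3}=-\lvert(a-b)(b-c)(c-a)\rvert^{2}/D^{2}$.

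Taking arguments gives $\arg(w_{1}w_{2}w_{3})=\pi-2\arg D$. Comparing $\bar{D}=(1-b\bar{a})(1-c\bar{b})(1-a\bar{c})$ with the definition (\ref{def ang}), one has $\alpha(x_{1},x_{2},x_{3})=-\arg(k_{12}k_{23}k_{31})=\arg\bar{D}=-\arg D$, so $\arg(w_{1}w_{2}w_{3})=\pi+2\alpha$ and the identity follows up to a global sign. The main obstacle is precisely this sign: reconciling the raw computation with the asserted $+2\alpha$ requires pinning down the branch of $\arg$ on the negative-real numerator together with the orientation convention implicit in $2\alpha=\operatorname*{Area}(T)$, noting that reversing the order of the triple conjugates every $\operatorname*{kos}$ and flips the sign of $\alpha$. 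I would fix the orientation so that the positively oriented triangle corresponds to the intended sign of $\alpha$ and track the branch of $\arg$ accordingly, which is the only delicate point; the algebraic factoring of the $w_{i}$ is routine.

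As an independent check with the same bookkeeping, one can instead invoke Proposition \ref{formula}: in a complex geodesic $P_{G(1,2)}x_{3}=x_{3}$, so $\lvert\operatorname*{kos}_{i}\rvert=1$ and $\arg\operatorname*{kos}$ is the oriented Poincar\'{e}-disk angle at each vertex. The sum of the three vertex contributions is then matched against the Gauss--Bonnet angle defect $\pi-\operatorname*{Area}$, recovering the same identity and isolating the identical orientation/branch issue.
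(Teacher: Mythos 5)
Your route is essentially the paper's own: pass to the disk, discard the positive $\delta$-prefactors, compute the product of the three bracketed factors as a negative real number over the square of $D=(1-a\bar{b})(1-b\bar{c})(1-c\bar{a})$, identify the surviving argument with $2\alpha$ via (\ref{def ang}), and cite \cite[1.3.6]{Go} for $2\alpha=\operatorname*{Area}(T)$. Your factorization $w_{1}=(a-c)(\bar{a}-\bar{b})/\bigl((1-a\bar{b})(1-c\bar{a})\bigr)$ and the collection of numerators into $-\lvert(a-b)(b-c)(c-a)\rvert^{2}$ are correct, and the paper performs the identical computation, merely packaged through (\ref{first kos}): it writes $\operatorname*{kos}_{1}(2,3)$ as a positive multiple of $\kappa_{123}=\phi_{x_{1}}(x_{2})\overline{\phi_{x_{1}}(x_{3})}$ and multiplies Blaschke factors. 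Your closing cross-check via Proposition \ref{formula} and Gauss--Bonnet is also the paper's own second argument for the last equality.

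The one genuine loose end is the ``global sign'' you leave open, and it deserves a sharper diagnosis than orientation or branch of $\arg$: it is a conjugation-convention mismatch, and it originates in the paper, not in your algebra. With the literal Gram convention $k_{ij}=\langle k_{i},k_{j}\rangle=(1-x_{j}\bar{x_{i}})^{-1}$, which you use throughout, formula (\ref{def kos}) produces the complex conjugate of the geometric normalization (\ref{first kos})--(\ref{simple kos}): test at $x_{1}=0$, where (\ref{def kos}) yields $x_{3}\bar{x_{2}}/(\lvert x_{2}\rvert\lvert x_{3}\rvert)$ while (\ref{simple kos}) asserts $x_{2}\bar{x_{3}}/(\lvert x_{2}\rvert\lvert x_{3}\rvert)$; relatedly, (\ref{xxx}) as printed is itself off by a conjugation, since at $y=0$ its left side is $\langle\langle w,z\rangle\rangle$ and its right side is $\langle\langle z,w\rangle\rangle$. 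Consequently your $w_{1}$ equals $\overline{\delta_{12}\delta_{13}\operatorname*{kos}_{1}(2,3)}$ in the normalization the paper's proof actually uses. Rerun your computation with the $w_{i}$ conjugated (equivalently, with $\operatorname*{kos}$ as in (\ref{first kos}), as the paper does via $\kappa_{123}$): the product becomes $-\lvert(a-b)(b-c)(c-a)\rvert^{2}/\bar{D}^{2}$, so $\pi-\arg(\cdots)\equiv 2\arg\bar{D}\pmod{2\pi}$, while your own correct identity $k_{12}k_{23}k_{31}=1/\bar{D}$ gives $\alpha=\arg\bar{D}$. The claimed equality then holds on the nose, modulo $2\pi$ with branches pinned by $\operatorname{Re}k>0$ as the paper notes after (\ref{def ang}), and no appeal to a ``positively oriented triangle'' is needed: under reversal or conjugation, $\alpha$ and the argument of the kos product flip sign together, so the identity is orientation-stable once the kos convention is fixed. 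In short, the fix is not a choice you are free to make at the end but a correction of which of the two conjugate readings of (\ref{def kos}) is in force; with that settled your proof is complete and coincides with the paper's.
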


\begin{proof}
We identify $\mathbb{CH}^{1}$ with the unit disk in the complex plane. From
(\ref{first kos}) we see that $\operatorname*{kos}_{1}(2,3)$ is a positive
multiple of%
\[
\kappa_{123}=\left\langle \left\langle \phi_{x_{1}}(x_{2}),\phi_{x_{1}}%
(x_{3})\right\rangle \right\rangle .
\]
Hence in computing the left hand side, $LHS$, of (\ref{goal2})\ we can replace
$\operatorname*{kos}(2,3)$ with $\kappa_{123}$ and similarly for other
indices. The conformal involutions of the disk are given by Blaschke factors.
Hence, noting that for $a,b$ in the disk $\left\langle \left\langle
a,b\right\rangle \right\rangle =a\bar{b}$ we find that
\[
\kappa_{123}=\frac{x_{1}-x_{2}}{1-x_{1}\overline{x_{2}}}\overline{\frac
{x_{1}-x_{3}}{1-x_{1}\overline{x_{3}}}}%
\]
Thus%
\[
LHS=\pi-\arg\frac{-\Pi\left\vert x_{i}-x_{j}\right\vert ^{2}}{\Pi\left(
1-x_{i}\overline{x_{j}}\right)  ^{2}}%
\]
with both products over the index pairs $\left\{  \left(  1,2\right)  ,\left(
2,3\right)  ,\left(  3,1\right)  \right\}  .$ The positive factor
$\Pi\left\vert x_{i}-x_{j}\right\vert ^{2}$ does not affect the value of
$\arg$ and hence we continue with
\[
LHS=\pi-\left(  \pi+2\arg\Pi k(x_{i},x_{j}\right)  )=2\alpha
\]
the last equality by (\ref{def ang}).

To finish we need to show that one of the first two terms in (\ref{goal2})
equals $\operatorname*{Area}(T).$ In fact there are separate reasons why each
equals $\operatorname*{Area}(T).$ We mentioned that the relation between
$\alpha$ and $\operatorname*{Area}(T)$ is in \cite{Go}. For the other case let
$\gamma_{i}$ be the angle at vertex $x_{i}$. We see from Proposition
\ref{formula} that because $T\subset\mathbb{CH}^{1}$ we have
$\operatorname*{kos}_{1}(2,3)=e^{i\gamma_{1}}$ where $\gamma_{1}$ is the angle
at $x_{1}$ of the triangle $T. $ Similarly for the other indices. Using that
we see that $LHS$ in (\ref{goal2}) equals $\pi-(\gamma_{1}+\gamma_{2}%
+\gamma_{3})$ which equals $\operatorname*{Area}(T)$ by the classical result
based on the Gauss-Bonnet theorem$.$
\end{proof}

This result extends in the usual way to convex hyperbolic polygons. The
interior of the polygon can be triangulated, the area of each triangular piece
computed using the expressions in (\ref{goal2}) and the total area computed as
a sum. There will be cancellations generated by the fact that for kernel
functions $k_{xy}=\overline{k_{yx}}$. The final result will involve the
boundary vertices considered cyclically. Let $\operatorname*{Area}%
(x_{1},...,x_{n})$ denote the invariant area of the interior of the polygon
with vertices $\left\{  x_{i}\right\}  _{i=1}^{n}\subset\mathbb{CH}^{1}$ and
with the indices continued cyclically, $x_{n+1}=x_{1},$ etc. We have

\begin{corollary}%
\[
(n-2)\pi-\arg\prod\limits_{i=1}^{n}\operatorname*{kos}\nolimits_{i}%
(i+1,i+2)=-2\arg\prod\limits_{i=1}^{n}k_{x_{i}x_{i+1}}=\operatorname*{Area}%
(x_{1},...,x_{n}).
\]

\end{corollary}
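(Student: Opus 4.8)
The engine is the single-triangle identity (\ref{goal2}) just established: for any three points $a,b,c$ in $\mathbb{CH}^{1}$,
\begin{equation*}
\operatorname*{Area}(\{a,b,c\})=2\alpha(a,b,c)=-2\arg\left(k_{ab}k_{bc}k_{ca}\right)=\pi-\arg\left(\operatorname*{kos}\nolimits_{a}(b,c)\operatorname*{kos}\nolimits_{b}(c,a)\operatorname*{kos}\nolimits_{c}(a,b)\right).
\end{equation*}
The plan is to triangulate the convex polygon, sum this identity over the pieces, and track the two kinds of cancellation advertised in the remark preceding the statement. I would fix the fan triangulation into the $n-2$ triangles $\{x_{1},x_{j},x_{j+1}\}$, $j=2,\dots,n-1$; since invariant area is additive over a dissection, $\operatorname*{Area}(x_{1},\dots,x_{n})=\sum_{j=2}^{n-1}\operatorname*{Area}(\{x_{1},x_{j},x_{j+1}\})$, and each summand may be fed into either the $k$-form or the $\operatorname*{kos}$-form above.

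For the middle equality I would use the $k$-form of each summand, so that $\operatorname*{Area}=\sum_{j=2}^{n-1}\bigl(-2\arg(k_{1j}\,k_{j,j+1}\,k_{j+1,1})\bigr)$. Every interior diagonal $x_{1}x_{j}$ (for $3\le j\le n-1$) occurs in exactly two consecutive fan triangles, once as $k_{1j}$ (from $\{x_{1},x_{j},x_{j+1}\}$) and once as $k_{j1}$ (from $\{x_{1},x_{j-1},x_{j}\}$). Because $k_{j1}=\overline{k_{1j}}$, the factor $k_{1j}k_{j1}=\lvert k_{1j}\rvert^{2}>0$ contributes nothing to any argument, so all diagonals drop out; what survives is precisely the boundary product $k_{x_{1}x_{2}}k_{x_{2}x_{3}}\cdots k_{x_{n}x_{1}}$, each edge once in the cyclic orientation, giving $-2\arg\prod_{i=1}^{n}k_{x_{i}x_{i+1}}$.

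For the left equality I would instead feed each triangle into its $\operatorname*{kos}$-form, producing $\operatorname*{Area}=(n-2)\pi-\arg\prod_{j=2}^{n-1}\bigl(\operatorname*{kos}\nolimits_{1}(j,j+1)\operatorname*{kos}\nolimits_{j}(j+1,1)\operatorname*{kos}\nolimits_{j+1}(1,j)\bigr)$, where the constant $(n-2)\pi$ is exactly the accumulated $\pi$ from the $n-2$ triangles. The governing algebra here is that in $\mathbb{CH}^{1}$ one has $\lvert\operatorname*{kos}\rvert=1$, the symmetry $\operatorname*{kos}\nolimits_{v}(a,b)=\overline{\operatorname*{kos}\nolimits_{v}(b,a)}$ from (\ref{def kos}), and—since Proposition \ref{formula} identifies $\arg\operatorname*{kos}\nolimits_{v}(a,b)$ with a signed vertex angle—the cocycle $\operatorname*{kos}\nolimits_{v}(a,c)=\operatorname*{kos}\nolimits_{v}(a,b)\operatorname*{kos}\nolimits_{v}(b,c)$ at the level of arguments. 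Using these I would collapse the factors referencing the apex $x_{1}$ and the diagonal directions, telescoping and pairing off conjugates exactly as the diagonals did in the $k$-form, so as to reorganize what remains into the cyclic product $\prod_{i=1}^{n}\operatorname*{kos}\nolimits_{i}(i+1,i+2)$. A useful sanity check is $n=3$, where $i+2\equiv i-1$, so each $\operatorname*{kos}\nolimits_{i}(i+1,i+2)$ already records the interior angle at $x_{i}$ and the statement reduces to (\ref{goal2}); a conceptual cross-check for general $n$ is the Gauss--Bonnet theorem, $\operatorname*{Area}=(n-2)\pi-\sum_{i}\gamma_{i}$ with $\gamma_{i}\in(0,\pi)$ the interior angles secured by convexity, which makes the required equality $\arg\prod_{i}\operatorname*{kos}\nolimits_{i}(i+1,i+2)\equiv\sum_{i}\gamma_{i}$ precisely what the cocycle telescoping must deliver.

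The main obstacle throughout is branch control of $\arg$. Additivity of the argument across the $n-2$ summands and across a product of kernel values holds only modulo $2\pi$, and the entire content of the constant $(n-2)\pi$ is that these ambiguities cancel correctly rather than leaving a stray multiple of $2\pi$—or, worse, a wrong multiple of $\pi$. To pin this down I would use $\operatorname{Re}k>0$ on $RK(DA(Z))$, noted after (\ref{def ang}), which confines each $\arg k$ to $(-\pi/2,\pi/2)$, together with convexity, which lets me orient the fan coherently and bound each vertex angle; once the branch is fixed on the first triangle and propagated along the fan, the conjugate cancellations become purely formal and both displayed products emerge. Carrying the branch bookkeeping through the $\operatorname*{kos}$-side reorganization, rather than through the cleaner $k$-side, is the step I expect to be the most delicate.
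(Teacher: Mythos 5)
Your treatment of the middle equality is correct and is exactly the paper's own (sketched) proof: fan-triangulate, apply (\ref{goal2}) in its $k$-form, and cancel each diagonal via $k_{1j}k_{j1}=\left\vert k_{1j}\right\vert^{2}>0$, leaving the cyclic boundary product. The genuine gap is in your $\operatorname*{kos}$-side step. Your cocycle identity $\operatorname*{kos}\nolimits_{v}(a,b)\operatorname*{kos}\nolimits_{v}(b,c)=\operatorname*{kos}\nolimits_{v}(a,c)$ is fine (it is exact in $\mathbb{CH}^{1}$, not merely at the level of arguments, since $\left\vert\operatorname*{kos}\right\vert=1$ there), but carrying the telescoping out honestly does \emph{not} land on the displayed product. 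At $x_{1}$ the fan factors telescope to $\prod_{j=2}^{n-1}\operatorname*{kos}\nolimits_{1}(j,j+1)=\operatorname*{kos}\nolimits_{1}(2,n)$, and at each $x_{j}$, $3\le j\le n-1$, the two incident factors combine as $\operatorname*{kos}\nolimits_{j}(j+1,1)\operatorname*{kos}\nolimits_{j}(1,j-1)=\operatorname*{kos}\nolimits_{j}(j+1,j-1)$; together with $\operatorname*{kos}\nolimits_{2}(3,1)$ and $\operatorname*{kos}\nolimits_{n}(1,n-1)$ the product you actually obtain is
\[
\prod\limits_{i=1}^{n}\operatorname*{kos}\nolimits_{i}(i+1,i-1),
\]
each vertex flanked by its two \emph{neighbors}, whose arguments are the interior angles $\gamma_{i}$ — consistent with Gauss--Bonnet. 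This coincides with the pattern $\prod_{i}\operatorname*{kos}\nolimits_{i}(i+1,i+2)$ only when $n=3$, where $i+2\equiv i-1$; your sole sanity check was $n=3$, which is precisely the degenerate case that cannot detect the difference. For $n\ge4$ the asserted cross-check $\arg\prod_{i}\operatorname*{kos}\nolimits_{i}(i+1,i+2)\equiv\sum_{i}\gamma_{i}\pmod{2\pi}$ is false: for a small square centered at the origin each $\arg\operatorname*{kos}\nolimits_{i}(i+1,i+2)\approx\pi/4$, so that argument is $\approx\pi$ while $\sum\gamma_{i}\approx2\pi$; correspondingly $(4-2)\pi-\arg\prod_{i}\operatorname*{kos}\nolimits_{i}(i+1,i+2)\approx\pi$, whereas the area (and the middle expression $-2\arg\prod k_{x_{i}x_{i+1}}$) is $\approx0$. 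So the step "reorganize what remains into the cyclic product $\prod_{i}\operatorname*{kos}\nolimits_{i}(i+1,i+2)$" cannot be carried out; no branch bookkeeping repairs a discrepancy of $\pi$.

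What this reveals is that the corollary's first expression, read literally with all indices cyclic, carries an index slip inherited from the $n=3$ case, and the neighbor pattern is the correct target. Indeed one can verify it directly in $\mathbb{CH}^{1}$: $\operatorname*{kos}\nolimits_{i}(i+1,i-1)$ is the unimodular part of $\phi_{x_{i}}(x_{i-1})\overline{\phi_{x_{i}}(x_{i+1})}$ by (\ref{first kos}), and $\phi_{x_{i}}(x_{j})=(x_{i}-x_{j})k_{ji}$; summing arguments cyclically, the terms $\arg(x_{i}-x_{i-1})-\arg(x_{i}-x_{i+1})$ contribute $n\pi$ while the kernel terms contribute $2\arg\prod k_{x_{i}x_{i+1}}$, whence $(n-2)\pi-\arg\prod_{i}\operatorname*{kos}\nolimits_{i}(i+1,i-1)\equiv-2\arg\prod_{i}k_{x_{i}x_{i+1}}\pmod{2\pi}$, as required. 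With the pattern $(i+1,i+2)$, by contrast, uncancelled terms $\arg\phi_{x_{i}}(x_{i+2})$ survive — genuine diagonal data that the boundary kernel product cannot see. Retargeted at $\prod_{i}\operatorname*{kos}\nolimits_{i}(i+1,i-1)$, your telescoping plan, together with the $\operatorname{Re}k>0$ branch control and convexity you invoke, does go through and then agrees with the paper's one-line sketch (triangulate, apply (\ref{goal2}), cancel via $k_{xy}=\overline{k_{yx}}$); as written, however, the left equality of your proposal fails.
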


If $T$ is not in a complex geodesic then the first two terms in (\ref{goal2})
need not be equal. In that more general case 2$\alpha$ equals the symplectic
area of $T$, the integral of the symplectic form of $\mathbb{CH}^{n}$ over a
real two manifold bounded by the sides of $T$ \cite{HM}. That quantity is also
equal to the area of the triangle in the complex geodesic $G $ containing
$x_{1}$ and $x_{2}$ and with vertices $x_{1},x_{2},$ and $P_{G}x_{3}$
\cite{Go}. There is a nice discussion of the geometry associated to the
angular invariant in \cite{CO} and that paper presents the result we used for
triangles in $\mathbb{CH}^{1}$ as the simplest special case of the rich
relationship between the angular invariant and Hermitian geometry. More about
that relationship is in \cite{BI}, \cite{BIW}, \cite{C}, \cite{CO}, \cite{HM},
\cite{Go}, \cite{Ro}, and the references there.

In the previous proposition (\ref{goal2}) can be rewritten using
$\arg(abc)=\operatorname{Im}\log a+\operatorname{Im}\log b+\operatorname{Im}%
\log c.$ If that is done then the following proposition appears to somehow be
a pair with the previous one. As before the result can be formulated
invariantly but we just present the cleaner case. The set $BK_{2}=\left\{
\left(  x,y,0,...,0\right)  \in\mathbb{B}_{n}:x,y\in\mathbb{R}\right\}  $
inside $\mathbb{CH}^{n}=\mathbb{B}_{n}$ is a totally geodesically embedded
copy of the Beltrami-Klein model of $\mathbb{RH}^{2}$ which has constant
curvature $-1/4.$ As such it carries a natural area measure.

\begin{proposition}
Suppose $T=\left\{  x_{1},x_{2},x_{3}\right\}  \subset BK_{2}.$ Then%
\[
4(\pi-(\cos^{-1}\operatorname*{kos}\nolimits_{1}(2,3)+\cos^{-1}%
\operatorname*{kos}\nolimits_{3}(1,2)+\cos^{-1}\operatorname*{kos}%
\nolimits_{2}(3,1)))=\operatorname*{Area}(T).
\]

\end{proposition}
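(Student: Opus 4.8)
The plan is to reduce the identity to the classical Gauss--Bonnet formula for geodesic triangles in a surface of constant curvature, after first identifying each $\cos^{-1}\operatorname*{kos}$ with an interior angle of $T$. So the proof splits into an angle--identification step and an area step, with essentially all the content living in the first.

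First I would show that for each vertex $x_{i}$ the real number $\cos^{-1}\operatorname*{kos}_{i}$ is exactly the interior angle $\theta_{i}$ of $T$ at $x_{i}$, measured in the intrinsic $\mathbb{RH}^{2}$ geometry of $BK_{2}$. Since $\operatorname*{kos}$ is automorphism invariant and the subgroup of $\operatorname*{Aut}\mathbb{B}_{n}$ preserving the totally real form $BK_{2}$ acts transitively on it as the isometry group of $\mathbb{RH}^{2}$, I can move any chosen vertex $x_{i}$ to the origin by such an automorphism, carrying $T$ to a triangle with all coordinates real and one vertex at $0$. In that position formula (\ref{simple kos}) gives $\operatorname*{kos}_{i}=\langle\langle\widehat{y},\widehat{z}\rangle\rangle$ with $y,z$ real, so $\operatorname*{kos}_{i}$ is the cosine of the Euclidean angle at the origin between the two rays; because the $BK_{2}$ metric is conformal with the ambient Euclidean metric at the origin (Section \ref{hg}), this Euclidean angle equals the hyperbolic angle $\theta_{i}$. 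This is precisely the ``third case'' computation carried out in Section \ref{EK}. Since $\theta_{i}\in(0,\pi)$ and $\operatorname*{kos}_{i}$ is real, the principal branch of $\cos^{-1}$ recovers $\theta_{i}$ with no ambiguity.

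With that identification the left side of the asserted equality becomes $4(\pi-(\theta_{1}+\theta_{2}+\theta_{3}))$. I would then invoke the Gauss--Bonnet theorem for the geodesic triangle $T$ in $BK_{2}$: since the sides are geodesics their geodesic curvature vanishes, so $\int_{T}K\,dA+\sum_{i}(\pi-\theta_{i})=2\pi$, and with the constant curvature $K=-1/4$ of $BK_{2}$ this rearranges to $\operatorname*{Area}(T)=4(\pi-\sum_{i}\theta_{i})$. Combining the two displays gives the claimed formula; this is the same classical Gauss--Bonnet input already used in the proof of the previous proposition, only with curvature $-1/4$ rather than $-1$, which is the source of the factor $4$.

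The two substitutions are routine; the one point needing care is the angle--identification step, namely that $\cos^{-1}\operatorname*{kos}_{i}$ is the genuine interior angle of $T$ at \emph{every} vertex and not merely at one. The subtlety is that the computation in Section \ref{EK} is performed only for a vertex placed at the origin, so I must ensure that the automorphism moving $x_{i}$ to the origin can be chosen inside the stabilizer of $BK_{2}$, hence acting as an isometry of the $\mathbb{RH}^{2}$ structure and preserving reality of the coordinates. Transitivity of that stabilizer on $BK_{2}$ supplies exactly such an automorphism, and the invariance of $\operatorname*{kos}$ then transfers the origin computation to the general vertex.
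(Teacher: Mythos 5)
Your proposal is correct and follows essentially the same route as the paper: identify each $\cos^{-1}\operatorname*{kos}_{i}$ with the interior hyperbolic angle at $x_{i}$ via (\ref{simple kos}) and conformality of the $BK_{2}$ metric with the Euclidean metric at the origin, then apply Gauss--Bonnet in constant curvature $-1/4$. Your added care about choosing the automorphism within the stabilizer of $BK_{2}$ merely makes explicit what the paper's invariance argument leaves implicit, so the two proofs coincide in substance.
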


\begin{proof}
This is a consequence of two facts. First, taking note of (\ref{simple kos}),
$\cos^{-1}\operatorname*{kos}(2,3)$ is the angle between the geodesics
$x_{1}x_{2}$ and $x_{1}x_{3},$ and similarly for the other indices. Secondly,
by the Gauss-Bonnet theorem, the area of a triangle with angles $\alpha
,\beta,\gamma$ in a plane of constant curvature $-1/4$ is $4(\pi-(\alpha
+\beta+\gamma)).$
\end{proof}

It would be interesting to have a general result which unifies the two propositions.

\subsection{$Kos$ and the Multiplier Algebra}

We discuss briefly a role of $\operatorname*{kos}$ the study of multiplier
algebras of RKHSI; more information is in \cite{Ro} and \cite[Sec. 3]{Ha}.

Suppose $H\in\mathcal{RK}$ and let $\mathcal{M}=\mathcal{M}(H)$ be the algebra
of multiplier operators on $H$ normed by the operator norm. Introduce
coordinates on $\mathcal{M}$ by associating the multiplier $M$ with the $n$ -
tuple $\lambda=\left(  \lambda_{1},...,\lambda_{n}\right)  $ where $M^{\ast
}k_{j}=\overline{\lambda_{j}}k_{j},$ $j=1,...,n.$ Let $\left(  \mathcal{M}%
\right)  _{1}$ be the unit ball of $\mathcal{M}$ and let
$\operatorname*{Slice}_{1}\left(  \left(  \mathcal{M}\right)  _{1}\right)  $
be the slice $\left(  \mathcal{M}\right)  _{1}\cap\left\{  \lambda:\lambda
_{1}=0\right\}  .$ The discussion in \cite{Ro} and in \cite[Sec. 3]{Ha} gives
the following.

\begin{proposition}
[{\cite[Sec. 3]{Ha}}]If $H\in\mathcal{CPP}$ then the point on
$\operatorname*{Slice}_{1}\left(  \left(  \mathcal{M}\right)  _{1}\right)  $
with maximum value of $\operatorname{Re}\lambda_{j} $ , $2\leq j\leq n,$ has
coordinates
\[
\left(  0,\delta_{12}\operatorname*{kos}\nolimits_{1}(j,2),,...,\delta
_{1n}\operatorname*{kos}\nolimits_{1}(j,n)\right)  .
\]

\end{proposition}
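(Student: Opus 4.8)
The plan is to identify $\operatorname*{Slice}\nolimits_{1}((\mathcal{M})_{1})$ explicitly from the Pick-matrix description of the multiplier unit ball and then run a short extremal-value argument. Since $H\in\mathcal{CPP}$, a multiplier $M$ with $M^{\ast}k_{i}=\overline{\lambda_{i}}k_{i}$ satisfies $\left\Vert M\right\Vert \le1$ if and only if the Pick matrix
\[
P(\lambda)=\left(\left(1-\lambda_{i}\overline{\lambda_{l}}\right)k_{il}\right)_{i,l=1}^{n}\succcurlyeq0;
\]
this is exactly the content of the complete Pick property (equivalently, via Theorem \ref{reduction} one writes $H\sim DA(X)$ and uses the known Pick criterion for Drury--Arveson together with the fact that contractive multipliers restrict to regular subspaces). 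Thus $\operatorname*{Slice}\nolimits_{1}((\mathcal{M})_{1})=\{\lambda:\lambda_{1}=0,\ P(\lambda)\succcurlyeq0\}$ is a compact convex body (the unit ball of the multiplier norm on the slice), so the real-linear functional $\operatorname{Re}\lambda_{j}$ attains a maximum there. I would first pin down the maximum value, then show the maximizer is the asserted point.

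First I would extract the upper bound from the two-by-two picture. For any feasible $\lambda$ the principal submatrix of $P(\lambda)$ on the index set $\{1,j\}$ is positive semidefinite, and with $\lambda_{1}=0$ its nonnegative determinant reads $k_{11}(1-|\lambda_{j}|^{2})k_{jj}-|k_{1j}|^{2}\ge0$, i.e. $|\lambda_{j}|^{2}\le1-|k_{1j}|^{2}/(k_{11}k_{jj})=\delta_{1j}^{2}$ by (\ref{distance}). Hence $\operatorname{Re}\lambda_{j}\le|\lambda_{j}|\le\delta_{1j}$, with equality forcing $\lambda_{j}=\delta_{1j}$.

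Next I would exhibit a feasible point meeting this bound. Using Theorem \ref{reduction} take $H\sim DA(X)$; since every quantity in sight is rescaling- and automorphism-invariant, apply an automorphism so that $x_{1}=0$, whence $k_{1}\equiv1$, $\delta_{1i}=|x_{i}|$, and (\ref{simple kos}) applies. The linear functional $\phi(w)=\langle\langle w,\hat{x}_{j}\rangle\rangle$ is a contractive multiplier of the Drury--Arveson space, so its restriction to the regular subspace $DA(X)$ is a contractive multiplier whose eigenvalue tuple $\lambda^{\ast}$ has $\lambda_{i}^{\ast}=\phi(x_{i})=\langle\langle x_{i},\hat{x}_{j}\rangle\rangle=\delta_{1i}\operatorname*{kos}\nolimits_{1}(j,i)$ (the identification being a direct computation from (\ref{def kos}) and (\ref{simple kos}), once the conjugation convention is fixed), with $\lambda_{1}^{\ast}=0$ and $\lambda_{j}^{\ast}=\delta_{1j}$ because $\operatorname*{kos}\nolimits_{1}(j,j)=1$. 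Thus $\lambda^{\ast}$ lies in the slice and attains $\operatorname{Re}\lambda_{j}=\delta_{1j}$, so it is a maximizer.

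Finally I would argue uniqueness, which simultaneously re-derives the coordinates intrinsically. At any maximizer $\lambda_{j}=\delta_{1j}$, so the $\{1,j\}$ block of $P(\lambda)$ is singular; extending its null vector $\xi$ by zeros and using that $\xi^{\ast}P(\lambda)\xi=0$ for the semidefinite matrix $P(\lambda)$ gives $P(\lambda)\xi=0$. Row $i$ of that identity, namely $k_{i1}\xi_{1}+(1-\lambda_{i}\delta_{1j})k_{ij}\xi_{j}=0$, is a single linear equation that solves to $\lambda_{i}=\delta_{1i}\operatorname*{kos}\nolimits_{1}(j,i)$, matching $\lambda^{\ast}$; hence the maximizer is unique and equal to the asserted point. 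I expect the main obstacle to be the feasibility step: establishing cleanly that $\phi(w)=\langle\langle w,\hat{x}_{j}\rangle\rangle$ is a contractive Drury--Arveson multiplier (equivalently, that the two-point extremal interpolant extends to all of $X$ with unchanged norm by the CPP), together with the conjugation bookkeeping needed to see that $\langle\langle x_{i},\hat{x}_{j}\rangle\rangle$ really is $\delta_{1i}\operatorname*{kos}\nolimits_{1}(j,i)$.
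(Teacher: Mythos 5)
The paper itself gives no proof of this proposition --- it is quoted from \cite[Sec. 3]{Ha} --- so your attempt is measured against the cited source's method rather than an in-paper argument, and your skeleton (describe the slice by Pick-matrix positivity, get the upper bound $\operatorname{Re}\lambda_{j}\leq\delta_{1j}$ from the $\{1,j\}$ principal $2\times2$ determinant, exhibit the extremal multiplier $w\mapsto\langle\langle w,\hat{x}_{j}\rangle\rangle$, and force the remaining coordinates by the null-vector argument $\xi^{\ast}P\xi=0\Rightarrow P\xi=0$) is exactly the right one, and each of those four steps is individually sound. Two remarks on attribution: the Pick-matrix description of $(\mathcal{M})_{1}$ for finite dimensional $H$ is not ``the content of the CPP'' --- expanding $\left\Vert M^{\ast}v\right\Vert ^{2}\leq\left\Vert v\right\Vert ^{2}$ for $v=\sum c_{i}k_{i}$ shows, in any RKHS with the paper's convention $k_{il}=\left\langle k_{i},k_{l}\right\rangle $, that $\left\Vert M\right\Vert \leq1$ if and only if $\left(  (1-\overline{\lambda_{i}}\lambda_{l})k_{il}\right)  \succcurlyeq0$; the CPP enters only when you pass to the $DA(X)$ model. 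Also the contractivity of $w\mapsto\langle\langle w,\hat{x}_{j}\rangle\rangle$, which you flag as the main obstacle, is a one-liner: $\langle\langle w,z\rangle\rangle-\langle\langle w,\hat{x}_{j}\rangle\rangle\langle\langle\hat{x}_{j},z\rangle\rangle$ is a positive kernel, so the Pick quotient is $1$ plus a product of positive kernels.

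The one genuine defect is the conjugation you yourself flagged, and it is not merely cosmetic as written: your matrix $P(\lambda)=\left(  (1-\lambda_{i}\overline{\lambda_{l}})k_{il}\right)  $ equals $Q(\overline{\lambda})$ for the correct matrix $Q(\lambda)=\left(  (1-\overline{\lambda_{i}}\lambda_{l})k_{il}\right)  $ under the paper's coordinate convention $M^{\ast}k_{i}=\overline{\lambda_{i}}k_{i}$, so it describes the \emph{conjugate} of the slice. In consequence your two halves disagree: your feasible point has $\lambda_{i}^{\ast}=\langle\langle x_{i},\hat{x}_{j}\rangle\rangle$, while your row equation $k_{i1}\xi_{1}+(1-\lambda_{i}\delta_{1j})k_{ij}\xi_{j}=0$ solves via (\ref{def kos}) to $\lambda_{i}=\delta_{1i}\operatorname{kos}\nolimits_{1}(i,j)=\overline{\delta_{1i}\operatorname{kos}\nolimits_{1}(j,i)}$, the conjugate of the point you exhibited. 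The repair is one line and makes the argument cleaner: use $Q$. Then the singular $\{1,j\}$ block has null vector $\xi=(k_{1j},-k_{11})$ (no normalization of $x_{1}$ needed), and row $i$ of $Q\xi=0$ reads $k_{i1}k_{1j}=(1-\overline{\lambda_{i}}\delta_{1j})k_{11}k_{ij}$, i.e.
\begin{equation*}
\overline{\lambda_{i}}\,\delta_{1j}=1-\frac{k_{i1}k_{1j}}{k_{11}k_{ij}}=\delta_{1i}\delta_{1j}\operatorname{kos}\nolimits_{1}(i,j),
\end{equation*}
so $\lambda_{i}=\delta_{1i}\operatorname{kos}\nolimits_{1}(j,i)$ exactly as the proposition states, by pure Gram-matrix algebra from (\ref{def kos}) with no model at all; and your feasible point then agrees, since (\ref{def kos}) at $x_{1}=0$ gives $\operatorname{kos}\nolimits_{1}(j,i)=\langle\langle\hat{x}_{i},\hat{x}_{j}\rangle\rangle$, whence $\phi(x_{i})=\langle\langle x_{i},\hat{x}_{j}\rangle\rangle=\delta_{1i}\operatorname{kos}\nolimits_{1}(j,i)$. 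In mitigation, the hazard is present in the source too: the paper's displayed formulas (\ref{simple kos}) and (\ref{computed kos}) differ from a literal computation out of (\ref{def kos}) and (\ref{kernel}) by exactly this conjugation, so your instinct to quarantine the conjugation bookkeeping was correct --- but the proof is only complete once that bookkeeping is actually carried out as above.
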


Thus if $H=DA(X)\ $then the geometry of $\left(  \mathcal{M}(H)\right)  _{1}$
is enough to reconstruct the Gram matrix $\operatorname*{Gr}(H)$ and hence
also enough to describe $H$ and $X.$

\section{Finite Sets in $\mathbb{CH}^{n}$\label{point sets}}

\subsection{Describing Sets by Their Triangles}

It is possible to give parametric descriptions of congruence classes of sets
$X=\left\{  x_{i}\right\}  _{i=1}^{n+1}\subset\mathbb{CH}^{n}$ using the
distances $\delta(i,j)$ and the angular invariants $\alpha(i,j,k),$ \cite{BE},
\cite{HS}, \cite{G}, \cite{CG} \cite{Ro}. The description of triangles using
the data set $S^{\prime}$ is the simplest example. Using those results and
their proofs it is also possible to specify a unique model set in each
congruence class, the triangles $\Gamma$ in (\ref{tri}) and the tetrahedra
$\Delta$ in (\ref{tetra}) are examples. Those invariants and related ones can
also be us to describe more general geometric structures associated to
$\mathbb{CH}^{n},$ see for instance \cite{C} and \cite{CG}.

When those descriptions are specialized to sets in real hyperbolic space,
$X\subset\mathbb{RH}^{n}\subset\mathbb{CH}^{n},$ the angular invariant
vanishes and we obtain descriptions of polyhedra in $\mathbb{RH}^{n}$ in terms
of edge lengths. Here we use $\operatorname*{kos}$ rather than the angular
invariant and obtain descriptions of sets in $\mathbb{RH}^{n}$ and
$\mathbb{CH}^{n}$ which emphasize a different type of geometric data; the
basic example is the use of $S^{\prime\prime}$ to describe triangles. Our
description generalizes the idea of describing polyhedra in $\mathbb{RH}^{n}$
using a mix of edge lengths and cosines of vertex angles. The approach is well
suited to describing the relation between the geometry of a set and its
subsets$.$ In particular it lets give explicit answers to the questions in the introduction.

Suppose we are given $X=\left\{  x_{i}\right\}  _{i=1}^{n+1}$ $\subset
\mathbb{CH}^{n}$ and we connect $x_{1}$ to each of the other $x_{i}$ by a
geodesic $\gamma_{1i}.$ The point $x_{1}$ will be the vertex point of an
$n$--valent vertex $\mathbf{V}$\textbf{\ }which is composed of the bivalent
vertices $\mathbf{V}_{ij},$ $2\leq i,j\leq n+1$ having $\gamma_{1i}$ as a
first ray and\ $\gamma_{1j}$ as a second. We will describe $X$ using the
distances between $x_{1}$ and the other points and the numbers $K_{ij}%
=\operatorname*{kos}(\mathbf{V}_{ij})$. Specifically, recalling the definition
(\ref{KOS}), we define the $n$-vector $\varrho(X)$ and the $n\times n$ matrix
$\mathcal{M}(X)$ by%
\begin{align}
\rho(X)  &  =\left(  \delta(x_{1},x_{2}),...,\delta(x_{1},x_{n+1})\right)
,\text{ and}\label{m=}\\
\mathcal{M}(X)  &  =\left(  K_{ij}\right)  _{i,j=2}^{n+1}=\left(
\operatorname*{kos}\left(  \mathbf{V}_{ij}\right)  \right)  _{i,j=2}%
^{n+1}=\operatorname*{KOS}(DA(X),1).\nonumber
\end{align}

\begin{theorem}
\label{general} Given $X=\left\{  x_{i}\right\}  _{i=1}^{n+1}\subset
\mathbb{CH}^{n}$:

\begin{enumerate}
\item Each entry of $\varrho(X)$ is between $0$ and 1, $\mathcal{M}(X)$ has
$1$'s on the diagonal and $\mathcal{M}(X)\succcurlyeq0$.

\item Conversely, given an $n$-tuple $\sigma$ of numbers between 0 and $1$,
and a matrix $\mathcal{N}$ with $1^{\prime}s$ on the diagonal and
$\mathcal{N}$ $\succcurlyeq0$, there is an $X\subset\mathbb{CH}^{n}$ with
$\mathcal{M}(X)$ $=\mathcal{N}$ and $\varrho(X)=\sigma.$

\item Given $Y\subset\mathbb{CH}^{n}$, $X\sim Y$ if and only if $\mathcal{M}%
(X)=\mathcal{M}(Y)$\ and $\varrho(X)=\varrho(Y).$
\end{enumerate}
\end{theorem}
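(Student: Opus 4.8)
The plan is to use the fact, noted in Section~\ref{define}, that both $\operatorname*{kos}$ and $\delta$ are invariant under automorphisms of $\mathbb{B}_n$, which lets me normalize the distinguished vertex $x_1$ to the origin before computing anything. After applying $\phi_{x_1}$, formula~(\ref{simple kos}) gives $\operatorname*{kos}\nolimits_1(i,j)=\langle\langle \widehat{\phi_{x_1}(x_i)},\widehat{\phi_{x_1}(x_j)}\rangle\rangle$ and $\delta(x_1,x_i)=|\phi_{x_1}(x_i)|$. Writing $v_i=\phi_{x_1}(x_i)\in\mathbb{C}^n$, every entry of $\mathcal{M}(X)$ and of $\varrho(X)$ is thereby read off from the $n$ vectors $v_2,\dots,v_{n+1}$, and the whole problem collapses into elementary Hermitian linear algebra in $\mathbb{C}^n$.

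I would prove (1) and (2) together. With $x_1$ at the origin, $\mathcal{M}(X)$ is precisely the Gram matrix of the unit vectors $\{\widehat{v_i}\}_{i=2}^{n+1}$, so its diagonal entries equal $\|\widehat{v_i}\|^2=1$ and it is positive semidefinite because every Gram matrix is; the entries of $\varrho(X)$ are $|v_i|=\delta(x_1,x_i)$, which lie strictly between $0$ and $1$ since the points are distinct interior points of the ball. For the converse I run this backwards: an $n\times n$ matrix $\mathcal{N}\succcurlyeq0$ with unit diagonal factors as a Gram matrix $\mathcal{N}=(\langle\langle u_i,u_j\rangle\rangle)$ of unit vectors $u_2,\dots,u_{n+1}\in\mathbb{C}^n$, and then setting $x_1=0$, $x_i=\sigma_i u_i$ places $n+1$ points in $\mathbb{B}_n$ for which the same computation gives $\varrho(X)=\sigma$ and $\mathcal{M}(X)=\mathcal{N}$.

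For (3), the forward implication is immediate from the invariance of the two functionals. For the converse I would normalize both $x_1$ and $y_1$ to the origin via $\phi_{x_1}$ and $\phi_{y_1}$, obtaining vectors $v_i=\phi_{x_1}(x_i)$ and $w_i=\phi_{y_1}(y_i)$ in $\mathbb{C}^n$. The equalities $\varrho(X)=\varrho(Y)$ and $\mathcal{M}(X)=\mathcal{M}(Y)$ give $|v_i|=|w_i|$ and $\langle\langle \widehat{v_i},\widehat{v_j}\rangle\rangle=\langle\langle \widehat{w_i},\widehat{w_j}\rangle\rangle$; multiplying through by the matching norms yields $\langle\langle v_i,v_j\rangle\rangle=\langle\langle w_i,w_j\rangle\rangle$ for all indices, so the two tuples have identical Gram matrices in $\mathbb{C}^n$. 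The standard uniqueness theorem for Gram matrices then produces a unitary $U$ on $\mathbb{C}^n$ with $Uv_i=w_i$; since unitaries fix the origin and belong to $\operatorname*{Aut}\mathbb{B}_n$, the composite $\phi_{y_1}\circ U\circ\phi_{x_1}$ is the desired congruence carrying $X$ to $Y$.

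The crux that makes the argument work is the normalization step itself: once invariance has been used to place the distinguished vertex at the origin, all the complex hyperbolic content evaporates and the three claims reduce to the positive semidefiniteness of a Gram matrix, the factorization of a positive semidefinite matrix, and the unitary equivalence of tuples sharing a Gram matrix. The only point needing care is the bookkeeping of the index-$1$ relations, which are trivially zero on both sides, so that the Gram matrices agree as full $(n+1)\times(n+1)$ arrays and the unitary may be taken on all of $\mathbb{C}^n$.
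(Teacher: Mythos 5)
Your proposal is correct and follows essentially the same route as the paper: normalize $x_1$ to the origin with $\phi_{x_1}$, recognize $\mathcal{M}(X)$ via (\ref{simple kos}) as the Gram matrix of the normalized vectors, and reduce (1), (2), (3) respectively to positivity of Gram matrices, factorization of a positive semidefinite matrix with unit diagonal, and unitary equivalence of tuples with equal Gram matrices. Your handling of (3) is a slight variant of the paper's (you rescale to get Gram equality of the points themselves and write the congruence explicitly as $\phi_{y_1}\circ U\circ\phi_{x_1}$, where the paper works with the sphere projections $\hat{X},\hat{Y}$ and then invokes $\varrho(X)=\varrho(Y)$ along geodesics), but the substance is identical and your argument is complete.
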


\begin{proof}
In (1) the claim for $\rho(X)$ is clear. The matrix $\mathcal{M}(X)$ is
invariant under automorphisms of $\mathbb{CH}^{n}$ and hence we can suppose
$x_{1}$ is at the origin. Let $\widehat{X}$ be the\ set of radial projections
of the remaining points onto the unit sphere, $\widehat{X}$ $=\{\widehat{x_{2}%
},...,\widehat{x_{n+1}}\}\subset\mathbb{\partial B}_{n}.$ We then see from
(\ref{simple kos}) that $K_{ij}=\left\langle \left\langle \widehat{x_{i}%
},\widehat{x_{j}}\right\rangle \right\rangle $ for $2\leq i,j\leq n+1.$ Thus
$\mathcal{M}(X)$ is the Gram matrix of the set of vectors $\widehat{X}$ and
hence is positive semidefinite. For (2), a matrix with the properties of
$\mathcal{N}$ must be the Gram matrix of a set $W=\left\{  w_{i}\right\}
_{i=2}^{n+1}\subset\mathbb{C}^{n}$ , unique up to unitary equivalence. The
$1$'s on the diagonal of $\mathcal{N}$ insure that $W\subset\partial
\mathbb{B}_{n},$ We now form $X$ by designating the origin as $x_{1}$ and for
$2\leq i\leq n+1$ picking $x_{i}$ on the line segment $\left[  0,w_{i}\right]
$ with $\left\vert x_{i}\right\vert =\sigma_{i}.$ It is straightforward that
$X$ has the required properties.

For (3), first suppose $Y$ is congruent to $X;$ that is $Y$ is the image of
$X$ under an automorphism of the ball. The entries of $\varrho$ and
$\mathcal{M}$ are automorphism invariants and this gives the desired
equalities. In the other direction suppose the data associated with $X$ equals
the data associated with $Y.$ Without loss of generality we can suppose
$x_{1}$ is at the origin in which case $\mathcal{M}(X)$ is the Gram matrix of
the set $\hat{X}\subset\partial\mathbb{B}_{n}$. Similarly for $Y$ and $\hat
{Y}.$ Thus $\hat{X}$ and $\hat{Y}$ have the same Gram matrix and hence there
is a unitary map $\mathbb{C}^{n}$ which takes $\hat{X}$ to $\hat{Y}.$ That
unitary is also an automorphism $\mathbb{CH}^{n}$ and so takes each geodesic
from the origin to a point of $\hat{X}$ to a\ geodesic connecting the origin
to a point of $\hat{Y}.$ Given the further assumption that $\varrho
(X)=\varrho(Y)$ it must take $X$ to $Y,$ as required.
\end{proof}

It is possible to start with this result and describe a unique $X$ in each
congruence class, for instance by moving $X$ so that $x_{1}$ is at the origin
and then constructing an orthonormal basis for $\mathbb{C}^{n}$ in which the
matrix of coordinates of $\hat{X}$ is lower triangular. Theorem 7 of \cite{Ro}
is an instance of this\ approach carried out in detail.

\begin{corollary}
\label{our mq}If $H\in\mathcal{RK}$ then $H\in\mathcal{CCP}$ is and only if
$\operatorname*{KOS}(H,1)\succcurlyeq0.$
\end{corollary}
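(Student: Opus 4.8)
The plan is to prove the two implications separately, using Theorem \ref{reduction} for the forward direction and Theorem \ref{general} together with a short reconstruction argument for the reverse.

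For the forward direction, suppose $H \in \mathcal{CPP}$. By Theorem \ref{reduction} there is a finite set $X \subset \mathbb{CH}^{n-1}$ with $H \sim DA(X)$. Since each entry $\operatorname*{kos}\nolimits_1(i,j)$ is invariant under rescaling, $\operatorname*{KOS}(H,1) = \operatorname*{KOS}(DA(X),1) = \mathcal{M}(X)$, and this matrix is positive semidefinite by part (1) of Theorem \ref{general}. This direction is immediate once the rescaling invariance of $\operatorname*{kos}$ is invoked.

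For the reverse direction, suppose $\operatorname*{KOS}(H,1) \succcurlyeq 0$. I would first record that this matrix has $1$'s on the diagonal, since $\operatorname*{kos}\nolimits_1(i,i) = \delta_{1i}^{-2}(1 - |\widehat{h_{1i}}|^2) = 1$, and that the entries of $\varrho(H)$ lie strictly between $0$ and $1$ because irreducibility forbids parallel or orthogonal kernel functions. Applying part (2) of Theorem \ref{general} to the tuple $\sigma = \varrho(H)$ and the matrix $\mathcal{N} = \operatorname*{KOS}(H,1)$ produces a set $X \subset \mathbb{CH}^{n-1}$ with $\varrho(DA(X)) = \varrho(H)$ and $\operatorname*{KOS}(DA(X),1) = \operatorname*{KOS}(H,1)$. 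It then remains to conclude $H \sim DA(X)$, for then $H$ inherits the CPP from $DA(X)$.

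The crux, and the step I expect to be the main obstacle, is showing that the pair $(\varrho(H), \operatorname*{KOS}(H,1))$ determines $H$ up to rescaling within $\mathcal{RK}$ --- a statement that must hold for a general $H$, not only one already known to be of the form $DA(X)$, so part (3) of Theorem \ref{general} cannot be quoted directly. I would establish this by reconstructing the full Gram matrix from the data. Using the rescaling freedom, normalize so that $h_{ii} = 1$ and each $h_{1i} > 0$; then $h_{1i} = h_{i1} = \sqrt{1 - \delta_{1i}^2}$ is fixed by $\varrho(H)$, and solving the defining relation (\ref{def kos}) for $h_{ij}$ gives
\[
h_{ij} = \frac{h_{i1} h_{1j}}{1 - \delta_{1i}\delta_{1j}\operatorname*{kos}\nolimits_1(i,j)}, \qquad 2 \le i,j \le n,
\]
with the denominator nonzero because $h_{i1}, h_{1j} \ne 0$. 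Thus every entry of the normalized Gram matrix is a fixed function of $\varrho(H)$ and $\operatorname*{KOS}(H,1)$, so any two spaces in $\mathcal{RK}$ sharing this data have Gram matrices differing only by the normalizing rescaling. Applying this to $H$ and $DA(X)$ yields $H \sim DA(X) \in \mathcal{CPP}$, completing the proof.
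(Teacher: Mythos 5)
Your proof is correct and follows the same route as the paper, whose entire stated proof is that the corollary ``is immediate from the previous theorem and Theorem \ref{reduction}.'' The one substantive step you add --- the explicit Gram-matrix reconstruction showing that the pair $\left(\varrho(H),\operatorname*{KOS}(H,1)\right)$ determines a general $H\in\mathcal{RK}$ up to rescaling, so that part (3) of Theorem \ref{general} need not be invoked for an $H$ not yet known to be of the form $DA(X)$ --- is exactly the content the paper leaves implicit in the word ``immediate'' (and tacitly reuses in its remark after Theorem \ref{MQ} that ``the proof of Theorem \ref{general} allows the construction of a set $X$ with $DA(X)\sim H$''), and your execution of it is sound, including the correctly identified normalization $h_{ii}=1$, $h_{1i}=\sqrt{1-\delta_{1i}^{2}}>0$ and the observation that irreducibility forces the denominator $1-\delta_{1i}\delta_{1j}\operatorname*{kos}\nolimits_{1}(i,j)=h_{i1}h_{1j}/(h_{11}h_{ij})$ to be nonzero.
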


\begin{proof}
This is immediate from the previous theorem and Theorem \ref{reduction}.
\end{proof}

We say $X$ is in \textit{general position} if it is not contained in a totally
geodesic copy of $\mathbb{CH}^{n-1}$ inside of $\mathbb{CH}^{n}.$ From the
previous theorem and proof we have

\begin{corollary}
$X$ is in general position if and only if $\mathcal{M}(X)\succ0.$
\end{corollary}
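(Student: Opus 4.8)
The plan is to reduce the statement to a fact of linear algebra, using the Gram matrix description already established in the proof of Theorem \ref{general}. First I would invoke the automorphism invariance of $\mathcal{M}(X)$ to move $x_1$ to the origin; this changes neither the truth of $\mathcal{M}(X)\succ0$ nor the property of being in general position, since automorphisms carry totally geodesic copies of $\mathbb{CH}^{n-1}$ to such copies. With $x_1=0$, the proof of Theorem \ref{general} identifies $\mathcal{M}(X)$ as the Gram matrix of the $n$ unit vectors $\widehat{x_2},\dots,\widehat{x_{n+1}}\in\partial\mathbb{B}_n\subset\mathbb{C}^n$.

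The linear-algebra step is then standard: the Gram matrix of $n$ vectors in $\mathbb{C}^n$ is positive definite if and only if those vectors are linearly independent, equivalently if and only if they span $\mathbb{C}^n$. Thus $\mathcal{M}(X)\succ0$ is equivalent to the statement that $\widehat{x_2},\dots,\widehat{x_{n+1}}$ span $\mathbb{C}^n$. Since each $\widehat{x_i}$ is a positive scalar multiple of $x_i$, this is the same as saying that the points $x_2,\dots,x_{n+1}$ are not contained in any proper complex-linear subspace of $\mathbb{C}^n$.

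It remains to match this spanning condition with general position, and this geometric translation is the step I expect to require the most care. Any totally geodesic copy of $\mathbb{CH}^{n-1}$ that contains $X$ must contain $x_1=0$, and --- appealing to the classification of totally geodesic submanifolds recalled in Section \ref{hg} --- the copies of $\mathbb{CH}^{n-1}$ passing through the origin are precisely the sets $V\cap\mathbb{B}_n$ with $V\subset\mathbb{C}^n$ a complex-linear hyperplane. For such a $V$ one has $X\subset V\cap\mathbb{B}_n$ if and only if every $x_i\in V$, so some copy of $\mathbb{CH}^{n-1}$ contains $X$ if and only if $\operatorname{span}_{\mathbb{C}}\{x_2,\dots,x_{n+1}\}$ is a proper subspace of $\mathbb{C}^n$; indeed a proper span is always contained in some hyperplane $V$, while a full span lies in no such $V$. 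Hence $X$ fails to be in general position exactly when these vectors span a proper complex subspace, which by the previous paragraph is exactly the failure of $\mathcal{M}(X)\succ0$. Combining the three steps yields the stated equivalence.
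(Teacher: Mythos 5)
Your proposal is correct and takes essentially the same route the paper intends: the corollary is stated there as an immediate consequence of Theorem \ref{general} and its proof, namely the identification (with $x_{1}$ moved to the origin) of $\mathcal{M}(X)$ as the Gram matrix of the unit vectors $\widehat{x_{2}},\dots,\widehat{x_{n+1}}$, so that $\mathcal{M}(X)\succ0$ is equivalent to those vectors spanning $\mathbb{C}^{n}$, i.e., to $X$ not lying in any hyperplane slice $V\cap\mathbb{B}_{n}$. Your write-up simply makes explicit the two steps the paper leaves implicit --- the Gram-matrix criterion for linear independence and the fact that the totally geodesic copies of $\mathbb{CH}^{n-1}$ through the origin are exactly the intersections of $\mathbb{B}_{n}$ with complex hyperplanes --- both of which are carried out correctly.
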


We can use the previous theorem to describes $n-$valent vertices in
$\mathbb{CH}^{n}.$ Suppose $\mathbf{V}$ is an $n$-valent vertex in
$\mathbb{CH}^{n}$ with vertex point $x_{1}$ and rays $\left\{  \gamma
_{1i}\right\}  _{i=2}^{n+1}.$ We are only interested in congruence classes and
hence we suppose $x_{1}$ is at the origin. For $i=2,...,n+1$ select a point
$x_{i}$ on $\gamma_{1i}$ and set $X=\left\{  x_{i}\right\}  _{i=1}^{n+1}.$
From the previous theorem we know the congruence class of $X$ is determined by
$\mathcal{M}(X)$ and $\varrho(X).$ Looking at that proof we see that knowing
$\mathcal{M}(X)$ is equivalent to knowing the congruence class of the
projected set $\hat{X}.$ From the definitions we see that knowing $\hat{X}$ is
equivalent to knowing $\mathbf{V.}$ Hence, defining $\mathcal{M}(\mathbf{V})$
to be $\mathcal{M}(X)$ we have the following corollary of the previous theorem:

\begin{corollary}
\label{vertexx}Given an $n$-valent vertex $\mathbf{V}$ in $\mathbb{CH}^{n}$:

\begin{enumerate}
\item $\mathcal{M}(\mathbf{V})$ has $1$'s on the diagonal and $\mathcal{M}%
(\mathbf{V})\succcurlyeq0.$

\item Given $\mathcal{N}$ $\succcurlyeq0$ with $1^{\prime}s$ on the diagonal
there is a $\mathbf{V}$ in $\mathbb{CH}^{n}$ with $\mathcal{M}(\mathbf{V})$
$=\mathcal{N}$.

\item Given $Z=\left\{  z_{i}\right\}  _{i=2}^{n+1}\subset\partial
\mathbb{B}_{n}$ there is a $\mathbf{V}$ in $\mathbb{CH}^{n}$ with
$\mathcal{M}(\mathbf{V})$ equal to the Gram matrix of $Z.$

\item Given other such vertex $\mathbf{W}$, $\mathbf{V}$ is congruent to
$\mathbf{W}$ if and only if $\mathcal{M}(\mathbf{V})=\mathcal{M}(\mathbf{W}).$
\end{enumerate}
\end{corollary}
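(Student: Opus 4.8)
The plan is to deduce every claim from Theorem \ref{general} by exploiting the fact that $\mathcal{M}(\mathbf{V})$ records exactly the directional data of the vertex and nothing more. The first thing I would verify is that $\mathcal{M}(\mathbf{V})$ is well defined, i.e. independent of the auxiliary points $x_i$ chosen on the rays $\gamma_{1i}$. Placing the vertex point $x_1$ at the origin, formula (\ref{simple kos}) gives $\operatorname{kos}_{x_1}(x_i,x_j)=\langle\langle \widehat{x_i},\widehat{x_j}\rangle\rangle$, which depends only on the radial directions $\widehat{x_i}$ and hence only on the rays themselves. Thus $\mathcal{M}(\mathbf{V})=\mathcal{M}(X)$ is unambiguous, and it equals the Gram matrix of the direction set $\widehat{X}=\{\widehat{x_2},\dots,\widehat{x_{n+1}}\}\subset\partial\mathbb{B}_n$. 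This identifies congruence classes of $n$-valent vertices with unitary equivalence classes of $n$-tuples of unit vectors, and that dictionary drives the whole argument; the distance data $\varrho(X)$, which together with $\mathcal{M}(X)$ pins down a point set, is precisely the information a vertex discards.

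For part (1), I would pick any points $x_i$ on the rays to form a set $X$, note $\mathcal{M}(\mathbf{V})=\mathcal{M}(X)$, and quote part (1) of Theorem \ref{general}: the diagonal entries are $\langle\langle \widehat{x_i},\widehat{x_i}\rangle\rangle=1$ and the matrix, being a Gram matrix, is positive semidefinite. Parts (2) and (3) are the converse direction. For (3), given $Z=\{z_i\}_{i=2}^{n+1}\subset\partial\mathbb{B}_n$, I would take $\mathbf{V}$ to be the vertex at the origin whose $i$-th ray points toward $z_i$; then $\mathcal{M}(\mathbf{V})=(\langle\langle z_i,z_j\rangle\rangle)$ is exactly the Gram matrix of $Z$. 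Part (2) then follows because any $\mathcal{N}\succcurlyeq0$ with unit diagonal is realized as such a Gram matrix, unique up to unitary equivalence, as in the proof of Theorem \ref{general}(2); alternatively one invokes that theorem directly with an arbitrary choice of distances, say $\varrho(X)=(\tfrac12,\dots,\tfrac12)$, and forgets the distance data.

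The only part requiring a genuine argument is the congruence criterion (4), and I expect this to be the crux rather than a real obstacle. One direction is immediate: $\operatorname{kos}$ is an automorphism invariant, so congruent vertices have equal $\mathcal{M}$. For the converse I would mirror the proof of Theorem \ref{general}(3): normalize both vertex points to the origin, so that $\mathcal{M}(\mathbf{V})$ and $\mathcal{M}(\mathbf{W})$ are the Gram matrices of the direction sets $\widehat{X},\widehat{Y}\subset\partial\mathbb{B}_n$. Equality of Gram matrices yields a unitary of $\mathbb{C}^n$ carrying $\widehat{X}$ to $\widehat{Y}$, and since a unitary is an origin-fixing automorphism of $\mathbb{CH}^n$, it maps each ray of $\mathbf{V}$ to the corresponding ray of $\mathbf{W}$, giving the congruence. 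The one point to keep in mind is ordering: because the matrices agree entrywise in the given labeling, the unitary matches $\widehat{x_i}$ with $\widehat{y_i}$ for each $i$, so the resulting congruence respects the ray labels, consistent with the convention that congruence of labeled configurations preserves the labeling.
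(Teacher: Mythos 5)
Your proposal is correct and follows essentially the same route as the paper: it derives all four parts from Theorem \ref{general} via the identification of $\mathcal{M}(\mathbf{V})$ with the Gram matrix of the radially projected direction set $\widehat{X}\subset\partial\mathbb{B}_{n}$, which is exactly how the paper deduces the corollary in the paragraph preceding its statement. Your additional explicit details --- the well-definedness of $\mathcal{M}(\mathbf{V})$ via (\ref{simple kos}) and the unitary-as-origin-fixing-automorphism argument for part (4) --- merely spell out steps the paper compresses into its appeal to the proof of Theorem \ref{general}.
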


Neither the definitions of $\varrho(X)$ and $\mathcal{M}(X)$ nor the previous
theorem required that $x_{1}$ is at the origin. However any $X$ is congruent
to a set with $x_{1}$ at the origin and in that case $\mathcal{M}(X)$ is the
Gram matrix of the set $\hat{X},$ and $\delta(x_{1},x_{i})=\left\vert
x_{i}\right\vert .$ In that case $\mathcal{M}(X)$ has the information about
the direction of travel from the origin to the other points of $X$ and
$\rho(X)$ has the information about the distances to be traveled. Using those
descriptions it is clear that the sets of data $\rho$ and $\mathcal{M}$ are
independent of each other, each can be specified freely without regard to the
other. In sum, the set
\[
\mathfrak{S}_{n+1}=\left(  0,1\right)  ^{n}\text{\ }{\huge \times}\text{
}\left\{
\begin{array}
[c]{c}%
\text{positive definite }n\times n\text{ matrices}\\
\text{with ones on the diagonal}%
\end{array}
\right\}  \text{ }%
\]
is a moduli space for the congruence classes of ordered $(n+1)$-tuples in
complex hyperbolic space and the second factor is a moduli space for
$n-$valent vertices. The first factor, $\rho(X),$ consists of $n$ real
numbers. and the second, $\mathcal{M}(X),$ is determined by $n(n-1)/2$ complex
numbers. Together they give the expected total of $n^{2}$ real parameters.

For $n+1=3$ the description is quite simple. From Theorem \ref{2} we know that
the congruence class of the triangle $X=\{x_{1},x_{2},x_{3}\}$ is described by
the set $S^{\prime\prime}=(\delta_{12},\delta_{13},\operatorname*{kos}%
_{1}(2,3)).$ In the notation of the previous theorem%
\[
\varrho(X)=\left(  \delta_{12},\delta_{13}\right)  ,\text{ }\mathcal{M}(X)=%
\begin{pmatrix}
1 & \operatorname*{kos}_{1}(2,3)\\
\operatorname*{kos}_{1}(3,2) & 1
\end{pmatrix}
.
\]
It is also straightforward to describe the parameters for sets contained in
$\mathbb{CH}^{1}=\mathbb{B}_{1},$ the Poincare disk, or in $\mathbb{RH}%
^{2}=BK_{2},$ the Beltrami-Klein model of $\mathbb{RH}^{2}.$ For
$X\subset\mathbb{B}_{1}$ we use the ambient complex coordinate and write
$X=\left\{  r_{s}e^{i\theta_{s}}\right\}  _{s=1}^{n+1}$ with $r_{1}=0.$ Using
the computations in Section \ref{EK} we see that%
\[
\varrho(X)=\left(  r_{2},...,r_{n+1}\right)  ,\text{ }\mathcal{M}(X)=\left(
\exp i\left(  \theta_{s}-\theta_{t}\right)  \right)  _{s,t=2}^{n+1}.
\]
The automorphisms of $\mathbb{CH}^{1}$ which fix the base point are rotations.
They do not change the entries in $\mathcal{M}(X)$ or the congruence class of
$X.$ On the other hand, complex conjugation, which is not in
$\operatorname*{Aut}\mathbb{B}_{1},$ can change the matrix entries and the
congruence class.

For $Y$ $\subset BK_{2}$ we use the polar coordinates of the containing
$\mathbb{R}^{2}.$ We have $Y=\left\{  \left(  r_{s},\theta_{s}\right)
\right\}  _{s=1}^{n+1}$ with $r_{1}=0.$ Now using Section \ref{EK} gives
\[
\varrho(Y)=\left(  r_{2},...,r_{n+1}\right)  ,\text{ }\mathcal{M}(Y)=\left(
\cos\left(  \theta_{s}-\theta_{t}\right)  \right)  _{s,t=2}^{n+1}.
\]
In this case the map $(r,\theta)\rightarrow(r,-\theta),$ which looks like
complex conjugation, is the restriction of an element of $\operatorname*{Aut}%
\mathbb{B}_{2}$ to $BK_{2},$ namely the map $\left(  z,w\right)
\rightarrow(z,-w).$ That map changes the sign of the $\theta^{\prime}s$ but
that does not change $\mathcal{M}(Y)$ or the congruence class of $Y.$

\subsection{Comparison with the McCullough-Quiggin Theorem}

The McCullough-Quiggin theorem as presented in \cite{AM} is for possibly
infinite dimensional RKHSI $H.$ If $H$ is finite dimensional the theorem takes
the following simpler form:

\begin{theorem}
\label{MQ} If $H\in\mathcal{RK}$ then $H\in\mathcal{CPP}$ if and only if%
\[
\operatorname*{KOS}(H,s)\succcurlyeq0,1\leq s\leq\dim H.
\]

\begin{proof}
This follows from the result in \cite{AM} after two observations. First, that
result uses matrices $\operatorname*{MQ}$ as in (\ref{MQx}) while we use
matrices $\operatorname*{KOS}$ from (\ref{KOS}). The matrices are related to
each other through multiplication by diagonal matrices with positive entries.
Hence they have the same positivity properties and the choice between them in
this context is one of convenience. Second, the theorem in \cite{AM} requires
$\operatorname*{KOS}(J,r)\succcurlyeq0$ for each $J$ which is a finite
dimensional regular subspace of $H.$ However if $H$ is finite dimensional we
can apply the hypotheses to the maximal matrices $\operatorname*{KOS}(H,s).$
Every $J$ we need to consider is a regular subspace of $H$ and hence the
matrix $\operatorname*{KOS}(J,r)$ is a principal submatrix of some
$\operatorname*{KOS}(H,s).$ If $\operatorname*{KOS}(H,s)\succcurlyeq0$ that
implies $\operatorname*{KOS}(J,r)\succcurlyeq0$ and hence we do not need a
separate hypothesis for the smaller matrix.
\end{proof}
\end{theorem}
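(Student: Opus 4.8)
The plan is to reduce the stated theorem to two already-established facts: the characterization of $\mathcal{CPP}$ in terms of a single matrix $\operatorname*{KOS}(H,1)\succcurlyeq0$ given in Corollary \ref{our mq}, and the structural role of principal submatrices recorded in Section \ref{notation}. The theorem asserts positivity of \emph{all} the matrices $\operatorname*{KOS}(H,s)$, $1\le s\le \dim H$, is equivalent to $H\in\mathcal{CPP}$; since Corollary \ref{our mq} already gives equivalence with the single condition $\operatorname*{KOS}(H,1)\succcurlyeq0$, the real content is to show the full collection of conditions is neither more nor less than that single one, as applied across regular subspaces. So first I would invoke the cited result from \cite{AM}, which characterizes the CPP by requiring $\operatorname*{KOS}(J,r)\succcurlyeq0$ for every finite-dimensional regular subspace $J$ of $H$ and every admissible $r$.

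The first reconciling step is to pass between the two matrix normalizations. The reference uses the matrices $\operatorname*{MQ}$ of (\ref{MQx}), whereas the statement uses $\operatorname*{KOS}$ of (\ref{KOS}). By inspection of the two defining formulas, $\operatorname*{MQ}(H,s)$ and $\operatorname*{KOS}(H,s)$ differ only by conjugation with a diagonal matrix $D=\operatorname{diag}(\delta_{si})_{i\ne s}$ having strictly positive entries, i.e. $\operatorname*{MQ}(H,s)=D\,\operatorname*{KOS}(H,s)\,D$. Congruence by an invertible diagonal matrix preserves the inertia of a Hermitian matrix, so $\operatorname*{MQ}(H,s)\succcurlyeq0$ if and only if $\operatorname*{KOS}(H,s)\succcurlyeq0$; the choice between them is purely one of convenience. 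This step is routine once one writes out the entries, noting $\delta_{si}>0$ for distinct reproducing kernels in an irreducible space.

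The second step is the reduction from \emph{all regular subspaces} to the \emph{maximal} matrices. In finite dimensions, every finite-dimensional regular subspace $J$ of $H$ is simply the span of a subset of $RK(H)$, so $\operatorname*{Gr}(J)$ is a principal submatrix of $\operatorname*{Gr}(H)$ (as recorded in Section \ref{notation}). Consequently, for each choice of base index $r$ inside $J$, the matrix $\operatorname*{KOS}(J,r)$ is exactly a principal submatrix of $\operatorname*{KOS}(H,r)$ — the rows and columns retained are those kernel functions of $H$ that happen to lie in $J$. Since positive semidefiniteness passes to principal submatrices, the conditions $\operatorname*{KOS}(H,s)\succcurlyeq0$ for $1\le s\le\dim H$ automatically imply $\operatorname*{KOS}(J,r)\succcurlyeq0$ for all regular subspaces $J$; conversely those subspace conditions include the maximal $s=J=H$ cases. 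Thus the finite collection $\{\operatorname*{KOS}(H,s)\}_{s=1}^{\dim H}$ captures precisely the infinite-looking family of conditions in \cite{AM}.

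The step I expect to require the most care is the second one: being precise that $\operatorname*{KOS}(J,r)$ sits inside $\operatorname*{KOS}(H,r)$ as a genuine principal submatrix rather than merely an arbitrary submatrix. The point is that deleting a kernel function from $H$ deletes the \emph{same} index from both the row set and the column set of $\operatorname*{KOS}(H,r)$, so the retained block is symmetric about the diagonal and principal-submatrix positivity applies; one must confirm the base index $r$ is common to both $H$ and $J$ so that the two matrices are built around the same vertex. The normalization comparison and the invariance of inertia under diagonal congruence are the easy parts; the bookkeeping that identifies every needed subspace condition with a principal minor of one of the finitely many maximal matrices is where the argument earns its keep. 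Everything else follows by citing Corollary \ref{our mq} and the result of \cite{AM}.
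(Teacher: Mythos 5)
Your proposal is correct and takes essentially the same route as the paper's proof: it invokes the McCullough--Quiggin theorem from \cite{AM}, reconciles the $\operatorname*{MQ}$ and $\operatorname*{KOS}$ normalizations via congruence by the positive diagonal matrix $\operatorname{diag}(\delta_{si})_{i\neq s}$, and collapses the family of conditions over all regular subspaces to the finitely many maximal matrices by identifying each $\operatorname*{KOS}(J,r)$ as a principal submatrix of $\operatorname*{KOS}(H,r)$, exactly as the paper does. The only stylistic difference is your framing reference to Corollary \ref{our mq}, which the paper's proof does not use and which is unnecessary here (though not circular, since that corollary is proved independently).
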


In finite dimensions Corollary \ref{our mq} is an improvement on the
McCullough-Quiggin theorem, it requires a condition on$\ \operatorname*{KOS}%
(H,1)$ not on the full set of $\operatorname*{KOS}(H,s).$ The proof of Theorem
\ref{general} allows the construction of a set $X$ with $DA(X)\sim H$ using
the data in $\operatorname*{KOS}(H,1).$ The fact that $\operatorname*{KOS}%
(H,s)\succcurlyeq0 $ for the other $s$ is then obtained by working with
$DA(X_{R})$ for $X_{R} $ which is a renumbering of $X.$

On the other hand Theorem \ref{general} is more superficial than Theorem
\ref{MQ}. In the proof of Theorem \ref{general} the relationship between
$H\in\mathcal{CPP}$ and $H=DA(X)$ is taken as known, it was imported into the
discussion at Theorem \ref{reduction}. In contrast, the proof of
Theorem\ \ref{MQ} in \cite{AM} is part the substantial work of establishing
that relationship.

\subsection{Some Assembly Required}

In Theorem \ref{2} we\ identified descriptors of the congruence classes of
triangles in $\mathbb{CH}^{n}.$ In Theorem \ref{reduction} we saw that the
geometry of a finite set $X\subset\mathbb{CH}^{n}$ is determined by the
geometry of its included triangles. In Theorem \ref{general} we saw how to
organize that information into a vector $\rho(X)$ and a matrix $\mathcal{M(}%
X\mathcal{)}$ which together describe $X.$ If $Y$ is a subset of $X$ then
$\rho(Y)$ is a subvector of $\rho(X)$ and $\mathcal{M(}Y\mathcal{)}$ is a
submatrix of $\mathcal{M(}X\mathcal{)}$. We now combine those facts with our
results about $X$ to study the relations between the geometry of subsets of
$X$ and properties of submatrices of $\mathcal{M(}X\mathcal{)}$. The vector
$\rho(X)$ has a surprisingly little role in the analysis.

The following classical result will let us relate the fact that $\mathcal{M(}%
X\mathcal{)}\succcurlyeq0$ to the principal submatrices $\mathcal{PS}%
(\mathcal{M(}X\mathcal{))}$ of $\mathcal{M(}X\mathcal{)}$.

\begin{lemma}
[Sylvester's Criterion]\label{sylvester}An $n\times n$ matrix $A$ satisfies
$A\succcurlyeq0$ if and only if $\det A\geq0$ for all $A\in\mathcal{PS}%
{\large (A),}$ that is, if and only if the principal minors of $A$ are
nonnegative. $A\succ0$ if and only if the leading principal minors are positive.
\end{lemma}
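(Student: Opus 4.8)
The plan is to regard $A$ as Hermitian, which is the relevant setting since $A$ stands for a Gram or $\operatorname{KOS}$ matrix, and to prove the two ``only if'' directions together and then the two converses separately, reducing the semidefinite converse to the definite one by a perturbation argument.

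First I would handle both forward implications at once. If $A\succcurlyeq0$ and $B\in\mathcal{PS}(A)$ is the principal submatrix on an index set $S$, then for $w$ indexed by $S$ the form $w^{\ast}Bw$ equals $v^{\ast}Av$ for the vector $v$ that agrees with $w$ on $S$ and vanishes off $S$; hence $B\succcurlyeq0$, so all eigenvalues of $B$ are nonnegative and $\det B\geq0$. Replacing $\succcurlyeq$ by $\succ$ throughout shows that when $A\succ0$ every principal submatrix, and in particular every leading one, is positive definite with positive determinant. This settles the ``only if'' halves of both statements.

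For the definite converse I would induct on $n$ via the Schur complement. The base case $n=1$ is trivial. For the inductive step, write
\[
A=\begin{pmatrix} A' & b \\ b^{\ast} & c \end{pmatrix}
\]
with $A'$ the leading $(n-1)\times(n-1)$ block; by the inductive hypothesis $A'\succ0$, and the determinant identity $\det A=(\det A')\,s$ with Schur complement $s=c-b^{\ast}(A')^{-1}b$ forces $s>0$ since $\det A>0$ and $\det A'>0$. Completing the square then gives $v^{\ast}Av=u^{\ast}A'u+s|y|^{2}$ for $u=x+(A')^{-1}by$ (writing $v$ as the stacking of $x$ and the scalar $y$), and since $A'\succ0$ and $s>0$ this is positive for $v\neq0$; thus $A\succ0$.

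For the semidefinite converse I would pass to $A+\varepsilon I$ for $\varepsilon>0$. Expanding the determinant of each leading block by multilinearity gives $\det(A_{k}+\varepsilon I_{k})=\sum_{j=0}^{k}E_{j}(A_{k})\,\varepsilon^{k-j}$, where $E_{j}(A_{k})$ is the sum of the $j\times j$ principal minors of $A_{k}$; each such minor is a principal minor of $A$ and hence nonnegative, so every leading minor of $A+\varepsilon I$ is at least $\varepsilon^{k}>0$. The definite converse then yields $A+\varepsilon I\succ0$, and letting $\varepsilon\to0$ gives $v^{\ast}Av\geq0$ for all $v$, i.e.\ $A\succcurlyeq0$. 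I expect this last step to be the only real obstacle: it is exactly here that \emph{all} principal minors are needed rather than only the leading ones---the matrix $\operatorname{diag}(0,-1)$ has nonnegative leading minors yet is not semidefinite---and the mechanism forcing the non-leading minors into play is the appearance of $E_{j}(A_{k})$ in the expansion above.
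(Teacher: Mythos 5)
Your proof is correct and complete. Note, though, that the paper itself offers no proof of this lemma: it is stated as a classical fact (covered, e.g., by the Horn--Johnson reference \cite{HJ} in the bibliography), so there is no internal argument to compare against. What you give is precisely the standard textbook proof: restriction of the quadratic form to coordinate subspaces for both ``only if'' directions, Schur-complement induction with the identity $\det A=(\det A')\,s$ and completion of the square for the definite converse, and the perturbation $A+\varepsilon I$ with the expansion $\det(A_{k}+\varepsilon I)=\sum_{j}E_{j}(A_{k})\,\varepsilon^{k-j}$ for the semidefinite converse. Two points in your write-up are worth keeping exactly as they are: the explicit remark that $A$ must be taken Hermitian (true of the $\operatorname{KOS}$ and Gram matrices the paper applies the lemma to, since $\operatorname{kos}_{1}(i,j)=\overline{\operatorname{kos}_{1}(j,i)}$), and the counterexample $\operatorname{diag}(0,-1)$, which correctly isolates the one genuinely delicate feature of the statement --- that leading minors suffice in the definite case but all principal minors are needed in the semidefinite case, a distinction the paper's applications (e.g.\ the proof of Lemma \ref{matrix condition}) actually rely on.
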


To use this with $A=\operatorname*{KOS}(DA(X),1)$ we want information about
its principal submatrices, $\mathcal{PS}(\operatorname*{KOS}(DA(X),1)).$

\begin{lemma}
If $H\in\mathcal{RK}$ then the matrices in $\mathcal{PS}(\operatorname*{KOS}%
(H,1)) $ are the matrices $\operatorname*{KOS}(J,1)$ for $J$ which are regular
subspaces of $H$ which contain the kernel function $k_{1}.$ In particular if
$H=DA(X)$ then they are the matrices $\operatorname*{KOS}(DA(Y),1)$ for $Y$ a
subset of $X$ which contains the distinguished point.
\end{lemma}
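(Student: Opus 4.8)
The plan is to exhibit an explicit bijection between the index subsets that define the principal submatrices of $\operatorname*{KOS}(H,1)$ and the regular subspaces of $H$ containing $k_1$, and then to verify that corresponding matrices have identical entries. The one fact I will lean on throughout is that $\operatorname*{kos}_1(i,j)$ is a function of the Gram entries with indices among $\{1,i,j\}$ only: by the defining formula (\ref{def kos}) it is built from $k_{11}$, $k_{i1}$, $k_{1j}$, $k_{ij}$ together with $\delta_{1i}$ and $\delta_{1j}$, and these $\delta$'s are themselves determined by the diagonal and off-diagonal Gram entries among $1,i,j$ through (\ref{distance}). As recorded in Section \ref{define} in the remark on $\operatorname*{Gr}(H)$, the Gram matrix of a regular subspace $J$ is precisely the principal submatrix of $\operatorname*{Gr}(H)$ retaining the rows and columns of the kernels spanning $J$. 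Consequently, whenever $k_1,k_i,k_j$ all lie in $RK(J)$, the value $\operatorname*{kos}_1(i,j)$ computed in $J$ agrees with the value computed in $H$.

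Next I set up the correspondence. Write the index set of $\operatorname*{KOS}(H,1)$ as $\{2,\dots,n\}$. A principal submatrix $B\in\mathcal{PS}(\operatorname*{KOS}(H,1))$ is determined by the subset $T\subseteq\{2,\dots,n\}$ of rows (and matching columns) retained, so $B=\left(\operatorname*{kos}_1(i,j)\right)_{i,j\in T}$. To such a $T$ I associate $S=\{1\}\cup T$ and let $J$ be the regular subspace with $RK(J)=\{k_s:s\in S\}$; since $1\in S$, the subspace $J$ contains $k_1$. By the definition (\ref{KOS}),
\[
\operatorname*{KOS}(J,1)=\left(\operatorname*{kos}\nolimits_1(i,j)\right)_{\substack{i,j\in S\\ i,j\neq 1}}=\left(\operatorname*{kos}\nolimits_1(i,j)\right)_{i,j\in T},
\]
and by the invariance noted in the previous paragraph each entry here coincides with the corresponding entry of $B$. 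Hence $B=\operatorname*{KOS}(J,1)$. Conversely, any regular subspace $J$ containing $k_1$ is spanned by $\{k_s:s\in S\}$ for some $S\subseteq\{1,\dots,n\}$ with $1\in S$; putting $T=S\setminus\{1\}$, the same computation shows $\operatorname*{KOS}(J,1)$ is exactly the principal submatrix of $\operatorname*{KOS}(H,1)$ on the index set $T$. This gives the claimed identification.

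I do not expect a substantive obstacle here; the single point requiring care is bookkeeping. The base index $1$ is never removed when forming a principal submatrix of $\operatorname*{KOS}(H,1)$, because it is the fixed vertex labelling the functional $\operatorname*{kos}_1$ rather than one of the matrix indices. This is precisely why the correspondence is with regular subspaces \emph{containing} $k_1$ and not with arbitrary regular subspaces, and it is worth stating explicitly to avoid confusion. No deeper difficulty arises once the invariance of $\operatorname*{kos}$ under passage to regular subspaces is invoked.

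Finally, for the special case $H=DA(X)$, the regular subspaces of $DA(X)$ are exactly the spaces $DA(Y)$ for subsets $Y\subseteq X$ (as already used in the proof of Proposition \ref{equivalent}), and such a subspace contains $k_1=k_{x_1}$ precisely when $Y$ contains the distinguished point $x_1$. Substituting $J=DA(Y)$ into the identification above yields the stated description of $\mathcal{PS}(\operatorname*{KOS}(DA(X),1))$ as the family of matrices $\operatorname*{KOS}(DA(Y),1)$ with $Y\subseteq X$ containing $x_1$.
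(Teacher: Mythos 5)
Your proof is correct, and it is precisely the argument the paper leaves implicit: the lemma is stated there without proof, being regarded as immediate from the definition (\ref{KOS}), the fact that $\operatorname*{kos}\nolimits_{1}(i,j)$ depends only on Gram entries with indices in $\{1,i,j\}$, and the stability of $\operatorname*{Gr}(H)$ under passage to regular subspaces noted in Section \ref{define}. Your bookkeeping remark---that the base index $1$ labels the functional rather than a matrix row, which is exactly why the correspondence is with regular subspaces \emph{containing} $k_{1}$---is the one point of care, and you handled it correctly.
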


We want to know if sets $\left\{  Y_{i}\right\}  $ are congruent in specified
ways to subsets of a larger set $X;$ that is, can the $\left\{  Y_{i}\right\}
$ be \textit{assembled} into $X?$ In the earlier notation the question is:
given $\left\{  Y_{i}\right\}  \rightrightarrows\,??$ is it true that
$\left\{  Y_{i}\right\}  \rightrightarrows X?$ Question 1 of the introduction
is an example.

The previous results give general tools for analyzing such questions; we will
look at three specific variations. Question 1 deals with $3$ point subsets of
4 point sets. We first generalize that to 3 point subsets of $n+1$ point sets
and then to $n$ point inside a set of size $n+1.$ In the final variation we
consider assembling two 4 point sets along 3 point subsets to form a set with
$5$ points. That is an ad hoc example chosen to show ways these ideas play out
in more complicated situations.

The results are in the language of the previous theorem. In each case there
are no nontrivial conditions on $\rho(X)$, the operational conditions concern
determinants of matrices from $\mathcal{PSM}(X).$ In the first case the
conditions involve all of those matrices, in the second case only one, and in
the third, more intricate, case the conclusion involves conditions on 4 of the
15 determinants.

\subsubsection{Variation 1}

We know from Theorem \ref{reduction} that the congruence class of a set $X$ is
determined by the congruence classes of the triangles in $X$ which contain a
specified base point. We now ask if a given set of triangles can be congruent
to those faces of some tetrahedron $X.$ We want to know if we can map
triangles $\left\{  T_{i}\right\}  _{i=1}^{n}$ in $\mathbb{CH}^{n},$
\begin{align*}
T_{j}  &  =\left\{  y_{j1},y_{j,j+1},y_{j,j+2}\right\}  ,\text{ \ }j<n\\
T_{n}  &  =\left\{  y_{n1},y_{n,n+1},y_{n2}\right\}  ,
\end{align*}
into a set $X=\left\{  x_{1},...,x_{n+1}\right\}  $ with each $y_{jk}$ is
mapped to $x_{k}.$ That is, each image has its first vertex at $x_{1}$ and the
images fill $X$ with each segment $x_{1}x_{t}$ in $X$ covered twice. The
coherence conditions are that two triangle sides that cover the same
$x_{1}x_{t}$ must be the same length.

Recall that $\operatorname*{KOS}(\left\{  Y_{i}\right\}  ,1)$ is the same as
$\operatorname*{KOS}(\left\{  DA(Y_{i})\right\}  ,1).$

\begin{theorem}
\label{v1}Given the coherence data $\left\{  T_{i}\right\}  \rightrightarrows
\,??$ just described, the following are equivalent:

\begin{enumerate}
\item $\exists X,$ $\left\{  T_{i}\right\}  \rightrightarrows X,$

\item $\operatorname*{KOS}(\left\{  T_{i}\right\}  ,1)\succcurlyeq0$,

\item $\forall A\in\mathcal{PS(}\operatorname*{KOS}(\left\{  T_{i}\right\}
,1)),$ $\det A\geq0,$

\item $\forall S\subset\left\{  1,...,n\right\}  ,$ $1\in S,$ $\det
\operatorname*{KOS}(\left\{  T_{i}\right\}  _{y\in S},1)\geq0.$
\end{enumerate}
\end{theorem}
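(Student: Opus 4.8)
The plan is to derive the entire chain of equivalences from Theorem \ref{general} together with Sylvester's criterion (Lemma \ref{sylvester}), reading $\operatorname{KOS}(\{T_i\},1)$ as the Hermitian matrix with ones on the diagonal whose off-diagonal entry in position $(i,j)$ is the coherent value $\operatorname{kos}_1(i,j)$ supplied by the triangle $T_s$ containing the triple $\{x_1,x_i,x_j\}$. First I would establish $(1)\Leftrightarrow(2)$. Given $\{T_i\}\rightrightarrows\,??$, the matched side lengths furnish a radial vector $\rho$ with entries in $(0,1)$ and the triangles furnish the entries of $\mathcal{M}=\operatorname{KOS}(\{T_i\},1)$. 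By Theorem \ref{general}(2), any such data with $\mathcal{M}\succcurlyeq0$ is realized by a genuine $X\subset\mathbb{CH}^{n}$ assembling the $T_i$; conversely, by Theorem \ref{general}(1) any assembled $X$ has $\mathcal{M}(X)\succcurlyeq0$, and by construction $\mathcal{M}(X)$ agrees with $\operatorname{KOS}(\{T_i\},1)$ on the entries the triangles determine. The point to stress is that $\rho$ contributes nothing beyond lying in $(0,1)^{n}$, so the whole assembly question collapses to the semidefiniteness of $\mathcal{M}$, and the coherence hypothesis is exactly what makes $\mathcal{M}$ well defined.

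The equivalence $(2)\Leftrightarrow(3)$ is then immediate from Lemma \ref{sylvester}: $\mathcal{M}\succcurlyeq0$ holds if and only if every principal minor is nonnegative, that is, if and only if $\det A\ge0$ for all $A\in\mathcal{PS}(\mathcal{M})$. For $(3)\Leftrightarrow(4)$ I would invoke the lemma identifying $\mathcal{PS}(\operatorname{KOS}(H,1))$ with the matrices $\operatorname{KOS}(J,1)$ for the regular subspaces $J$ containing $k_1$; translated into the language of sets, the principal submatrices of $\mathcal{M}$ are precisely the matrices $\operatorname{KOS}(\{T_i\}_{y\in S},1)$ obtained by restricting to the subsets $S$ that contain the distinguished base point. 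Hence the list of principal minors appearing in $(3)$ coincides with the list indexed by $S\ni1$ in $(4)$, and the two determinant conditions are literally the same.

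The step I expect to be the genuine obstacle is the matrix-completion content hiding inside $(1)\Leftrightarrow(2)$. When $n=3$ (Question 1) the cycle of three triangles covers every pair $\{x_i,x_j\}$, so $\mathcal{M}$ is fully specified and the argument above is airtight. For $n\ge4$, however, the cyclic assembly pins down only the band values $\operatorname{kos}_1(i,i+1)$ and leaves the chordal entries free, so $\operatorname{KOS}(\{T_i\},1)$ is only partially defined; then $(2)$ must be read as the assertion that this partial matrix admits a positive semidefinite completion with ones on the diagonal. The cyclic pattern is not chordal, and its fully specified principal submatrices have size at most two (each merely recording $|\operatorname{kos}_1(i,i+1)|\le1$ from Theorem \ref{triangle}), so one cannot conclude completability from nonnegativity of the fully specified minors alone. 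Making $(1)\Leftrightarrow(2)\Leftrightarrow(3)$ rigorous therefore requires either exhibiting a canonical completion forced by the geometry or reinterpreting the determinant conditions in $(3)$ and $(4)$ so that they certify completability. Checking that these conditions correctly detect the spherical obstruction to arranging unit directions $\widehat{x}_2,\dots,\widehat{x}_{n+1}$ with prescribed consecutive angles around a closed cycle is, in my view, where the substantive work lies.
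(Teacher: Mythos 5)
Your first two paragraphs are, in substance, the paper's entire proof: the paper disposes of Theorem \ref{v1} in one sentence, calling it ``a direct consequence of Theorem \ref{general} and the two previous lemmas,'' which is exactly your chain --- $(1)\Leftrightarrow(2)$ from Theorem \ref{general}, with $\rho$ constrained only to lie in $(0,1)^{n}$ and the coherence hypothesis making the matrix well defined; $(2)\Leftrightarrow(3)$ from Sylvester's criterion (Lemma \ref{sylvester}); and $(3)\Leftrightarrow(4)$ from the lemma identifying $\mathcal{PS}(\operatorname{KOS}(H,1))$ with the matrices $\operatorname{KOS}(J,1)$ for regular subspaces $J$ containing $k_{1}$, i.e.\ with subsets containing the base point.

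Your third paragraph raises an issue the paper passes over in silence, and you are right that it is genuine --- but note it is a defect of the theorem's literal setup rather than a missing step in your argument. The displayed scheme assigns $T_{j}$ to the triple $\{x_{1},x_{j+1},x_{j+2}\}$, so the triangles determine only the cyclic band entries $\operatorname{kos}_{1}(i,i+1)$; for $n\geq4$ the matrix $\operatorname{KOS}(\{T_{i}\},1)$ is, in the paper's own earlier terminology, ``perhaps only partially defined,'' and the one-line proof is valid precisely when every entry is supplied, which the cyclic family achieves exactly at $n=3$, the case of Question 1. Your non-chordality objection can be made concrete: for $n=4$ take coherent triangles with $\operatorname{kos}_{1}$-values $\cos\theta,\ \cos\theta,\ \cos\theta,\ -\cos\theta$ for $0<\theta<\pi/4$. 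Every fully specified principal minor is positive (each is $1-\cos^{2}\theta>0$), yet no unit vectors $\widehat{x_{2}},\dots,\widehat{x_{5}}$ realize the data, since the spherical distance $\arccos\operatorname{Re}\left\langle \left\langle \cdot,\cdot\right\rangle \right\rangle$ on the unit sphere of $\mathbb{C}^{n}\cong\mathbb{R}^{2n}$ would have to satisfy $\pi-\theta\leq3\theta$. So condition (1) fails while the specified-minor readings of (3) and (4) hold, and $(2)\Leftrightarrow(3)$ breaks if (2) is read as completability, exactly as you predicted. The theorem therefore needs either the implicit hypothesis that the family contains a triangle for every pair $\{x_{1},x_{i},x_{j}\}$ --- under which your first two paragraphs constitute a complete proof identical to the paper's --- or a reformulation in the style of Variation 3 (Theorem \ref{v3}) and the corollary following Theorem \ref{Q 1}, with the undetermined chordal entries treated as free parameters and positive semidefiniteness demanded of some completion. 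Your instinct that this is where the substantive work lies is correct.
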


\begin{proof}
This is a direct consequence of Theorem \ref{general} and the two previous lemmas.
\end{proof}

Taking note of Corollary \ref{allthesame} we also have the same result for
spaces $J_{i}\in\mathcal{CPP}$ and the question of moving from $\left\{
J_{i}\right\}  \rightrightarrows\,??$ to $\left\{  J_{i}\right\}
\rightrightarrows H$ with $H\in\mathcal{CPP}$.

For both the $\left\{  T_{i}\right\}  $ and the $\left\{  J_{i}\right\}  $ the
condition $\det A\geq0$ is automatic for those $A$ that are $1\times1$ and is
insured by Theorem \ref{2} if $A$ is $2\times2.$ If the $\left\{
J_{i}\right\}  $ are assumed to be in $\mathcal{RK}$ but not necessarily in
$\mathcal{CPP}$ then the situation is more complicated, see the comment after
Theorem \ref{Q 2}.

\subsubsection{Variation 2}

Suppose we are given $\left\{  Y_{i}\right\}  _{i=1}^{n},$ sets of size $n$ in
$\mathbb{CH}^{n}$ \ Write $Y_{i}=\left\{  y_{ij}:1\leq j\leq n+1,\text{ }j\neq
i+1\right\}  .$ We impose the coherence conditions $\left\{  Y_{i}\right\}
\rightrightarrows\,??$ that would hold if there were a set
\[
X=\left\{  x_{1},...,x_{n+1}\right\}
\]
and congruences $Y_{i}\leadsto X$ which mapped the points $y_{is}$ to $x_{s},
$ for all $s\neq i.$ This coherence condition $\left\{  Y_{i}\right\}
\rightrightarrows\,??$ requires strong interrelations between the $Y_{i}$.
Given $Y_{r}$ and $Y_{s}$ there are $Y_{rs}\subset Y_{r}$ and $Y_{sr}\subset
Y_{s}$ both of size $n-1$ with $Y_{rs}\sim Y_{sr}$. Also, note that
every\ $A\in\mathcal{PS(}\operatorname*{KOS}(\left\{  Y_{i}\right\}  ,1))$
which is not maximal, $A\neq\operatorname*{KOS}(\left\{  Y_{i}\right\}  ,1),$
satisfies $A\in\mathcal{PS(}\operatorname*{KOS}(Y_{r},1))$ for some individual
$Y_{r}.$ We know $\operatorname*{KOS}(Y_{r},1)\succcurlyeq0$ and hence, by
Sylvester's criterion $\det A\geq0.$ In sum, the only $A\in\mathcal{PS(}%
\operatorname*{KOS}(\left\{  Y_{i}\right\}  ,1))$ for which we do not know
$\det A\geq0$ is the matrix $\operatorname*{KOS}(\left\{  Y_{i}\right\}  ,1)$
itself. This discussion, together with Theorem \ref{general}, and the previous
two lemmas, complete the proof of the following:

\begin{theorem}
\label{v2}Given $\left\{  Y_{i}\right\}  \rightrightarrows\,??$, there is an
$X$ so that $\left\{  Y_{i}\right\}  \rightrightarrows X$ if and only if
$\det\operatorname*{KOS}(\left\{  Y_{i}\right\}  ,1)\geq0$.
\end{theorem}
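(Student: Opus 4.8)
The plan is to reduce the existence of a realizing set $X$ to a single positive-semidefiniteness statement via Theorem \ref{general}, and then to use Sylvester's criterion (Lemma \ref{sylvester}) to collapse that statement down to the one determinant inequality asserted in the theorem. The coherence data $\left\{Y_{i}\right\}\rightrightarrows\,??$ furnishes well-defined candidate values for every $\delta(x_{1},x_{j})$ and every $\operatorname*{kos}\nolimits_{1}(i,j)$; for $n\geq3$ each triple $\{1,i,j\}$ lies in the image of some $Y_{r}$, and the overlap relations $Y_{rs}\sim Y_{sr}$ guarantee the value is independent of which $Y_{r}$ is used. Thus I obtain a fully defined vector $\rho(\left\{Y_{i}\right\})\in(0,1)^{n}$ and a matrix $\mathcal{N}:=\operatorname*{KOS}(\left\{Y_{i}\right\},1)$ with $1$'s on the diagonal.

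First I would carry out the reduction. By Theorem \ref{general}(2) there is an $X$ with $\rho(X)=\rho(\left\{Y_{i}\right\})$ and $\mathcal{M}(X)=\mathcal{N}$ precisely when $\mathcal{N}\succcurlyeq0$; and for such an $X$, deleting $x_{i+1}$ yields the subset whose $\rho$ and $\mathcal{M}$ data are the appropriate subvector and principal submatrix of $\rho(\left\{Y_{i}\right\})$ and $\mathcal{N}$, which by construction agree with $\rho(Y_{i})$ and $\mathcal{M}(Y_{i})$. Theorem \ref{general}(3), applied to these two $n$-point sets, then gives that the subset is congruent to $Y_{i}$, so $\left\{Y_{i}\right\}\rightrightarrows X$. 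Conversely, if some $X$ realizes the data then $\mathcal{N}=\operatorname*{KOS}(DA(X),1)=\mathcal{M}(X)\succcurlyeq0$ by Theorem \ref{general}(1). Hence the theorem is equivalent to the matrix equivalence $\det\mathcal{N}\geq0\iff\mathcal{N}\succcurlyeq0$.

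The direction $\mathcal{N}\succcurlyeq0\Rightarrow\det\mathcal{N}\geq0$ is immediate. For the converse I would invoke Sylvester's criterion: $\mathcal{N}\succcurlyeq0$ follows once every principal minor is nonnegative. The top minor $\det\mathcal{N}$ is nonnegative by hypothesis, so the entire burden is to control the proper principal minors. Here is the key combinatorial step. A proper principal submatrix $A$ of $\mathcal{N}$ omits some index $m\in\{2,\ldots,n+1\}$, and the surviving indices then lie in the index set of $Y_{m-1}$, whose image is $X\setminus\{x_{m}\}$. Consequently $A$ is a principal submatrix of $\operatorname*{KOS}(Y_{m-1},1)$. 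Since $Y_{m-1}$ is a genuine set of points in complex hyperbolic space, $\operatorname*{KOS}(Y_{m-1},1)\succcurlyeq0$ by Theorem \ref{general}(1) (equivalently Corollary \ref{our mq}), so every one of its principal minors — in particular $\det A$ — is nonnegative. This disposes of all proper minors simultaneously, and Sylvester's criterion then yields $\mathcal{N}\succcurlyeq0$.

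I expect the main obstacle to be exactly this combinatorial identification: verifying that each proper principal submatrix of $\mathcal{N}$ sits inside a \emph{single} $\operatorname*{KOS}(Y_{r},1)$, rather than only inside an overlap of several of the $Y_{r}$. This containment is what makes the lone inequality $\det\mathcal{N}\geq0$ sufficient, and it is special to the chosen assembly scheme, in which each $Y_{i}$ is $X$ with exactly one non-base point deleted. The remaining work is bookkeeping of a routine but necessary kind: confirming that the candidate data are genuinely single-valued (coherence), that the base-point index $1$ is retained throughout so that $\operatorname*{kos}\nolimits_{1}$ is always the functional in play, and that the matching $y_{is}\leftrightarrow x_{s}$ of indices is respected when the realizing subsets are compared to the $Y_{i}$.
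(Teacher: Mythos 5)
Your proof is correct and takes essentially the same approach as the paper: reduce the existence of $X$ to the condition $\operatorname*{KOS}(\left\{ Y_{i}\right\} ,1)\succcurlyeq0$ via Theorem \ref{general}, then apply Sylvester's criterion (Lemma \ref{sylvester}) after observing that every proper principal submatrix of $\operatorname*{KOS}(\left\{ Y_{i}\right\} ,1)$ omits some index and hence is a principal submatrix of a single $\operatorname*{KOS}(Y_{r},1)\succcurlyeq0$, leaving only the full determinant as a genuine hypothesis. Your extra bookkeeping --- single-valuedness of the candidate data via the overlap congruences $Y_{rs}\sim Y_{sr}$, and the observation that $n\geq3$ is needed for the matrix to be fully defined --- is slightly more explicit than the paper's treatment but does not change the argument.
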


\subsubsection{Variation 3}

In the previous two variations the coherence requirements on the $\left\{
Y_{i}\right\}  $ were minimal and maximal. We now look at an intermediate case
which is rich enough to display some structure and simple enough for explicit computations.

Suppose we have two four point sets in $\mathbb{CH}^{n},$ $Y_{A}=\left\{
a_{1},a_{2},a_{3},a_{4}\right\}  $ and $Y_{B}=\left\{  b_{1},b_{3},b_{4}%
,b_{5}\right\}  .$ The coherence requirements, $\left\{  Y_{A},Y_{B}\right\}
\rightrightarrows\,??,$ are the congruences that would hold if we had maps
$Y_{A},Y_{B}\leadsto X=\left(  x_{1},...,x_{5}\right)  $ which respect the
subscripts of the points. If that holds then the triangles $\left\{
a_{1},a_{3},a_{4}\right\}  $ and $\left\{  b_{1},b_{3},b_{4}\right\}  $ are
congruent, and that congruence is the only coherence requirement.

We should not expect to fill the matrix $\operatorname*{KOS}(\left\{
Y_{A},Y_{B}\right\}  ,1).$ The set $X$ has 5 points and so is determined by
$\left(  5-1\right)  ^{2}=16$ real parameters. On the other hand, each of
$Y$'s provides 9 parameters but $4$ of those are pinned by the fact that two
triangles are congruent, leaving 14. This suggests our description is two real
or one complex parameter short of being able to fully describe $X.$ In fact we
cannot construct the entry $\operatorname*{kos}_{1}(2,5)$ in the matrix
$\operatorname*{KOS}(\left\{  Y_{A},Y_{B}\right\}  ,1).$ Doing so would
require a $Y$ which contains points with subscripts $\left\{  1,2,5\right\}
,$ and neither of the $Y^{\prime}s$ satisfy that condition. To move forward we
introduce a new parameter $z$ and fill the matrix $\operatorname*{KOS}%
(\left\{  Y_{A},Y_{B}\right\}  ,1)$ to a matrix $\mathcal{Y}=$
$\operatorname*{KOS}(\left\{  Y_{A},Y_{B},z\right\}  ,1)$ obtained from
$\operatorname*{KOS}(\left\{  Y_{A},Y_{B}\right\}  ,1)$ by putting $z$ in the
place where the $\operatorname*{kos}_{1}(2,5)$ entry would be, and $\bar{z}$
where $\operatorname*{kos}_{1}(5,2)$ would be. The values of $z$ for which
$\operatorname*{KOS}(\left\{  Y_{A},Y_{B},z\right\}  ,1)\succcurlyeq0$, if
any, will parameterize inequivalent possible constructions of the desired $X.$

We need to study the determinants of the matrices in $\mathcal{PS(Y}).$ The
matrix $\mathcal{Y}$ is a $4\times4$ matrix with rows and columns indexed by
the set $\left\{  2,3,4,5\right\}  .$ The matrices in $\mathcal{PS(Y})$ are
determined by the 15 nonempty subsets of that index set. We denote those
matrices by $\mathcal{Y}$ with subscripts denoting the rows, and hence also
columns, of $\mathcal{Y}$ that are retained. There are $4$ single element
subsets to consider, for each of them the resulting matrix has the single
entry $1$ and hence a positive determinant. There are $6$ possibilities with
two subscripts. The matrix $\mathcal{Y}_{34}$ will be a submatrix of both
$\operatorname*{KOS}(Y_{A},1)$ and $\operatorname*{KOS}(Y_{B},1)$ and hence,
by Sylvester's criterion, will have a positive determinant. The matrix
$\mathcal{Y}_{23}$ is not a submatrix of $\operatorname*{KOS}(Y_{B},1),$ but
it is a submatrix of $\operatorname*{KOS}(Y_{A},1)$ and that is enough to
insure it has a positive determinant. The same holds for $\mathcal{Y}_{24}$
and a similar argument applies $\mathcal{Y}_{35}$ and $\mathcal{Y}_{45}$ but
with the roles of $A$ and $B$ reversed. The remaining matrix of that size is
$\mathcal{Y}_{25};$ it cannot be studied using either $Y_{A}$ or $Y_{B}.$ It
is a $2\times2$ matrix with 1's on the diagonal and $z$ and $\bar{z}$ as off
diagonal elements, and it must be dealt with separately. If we do not know
about $\det\mathcal{Y}_{25}$ then we also cannot know the positive
semidefinite nature of the matrices with it as a submatrix; and hence those
matrices must also be studied separately. They are $\mathcal{Y}_{253},$
$\mathcal{Y}_{254},$ and $\mathcal{Y}_{2534}=\mathcal{Y}.$ The two remaining
submatrices are $\mathcal{Y}_{234}=\operatorname*{KOS}(Y_{A},1)$ and
$\mathcal{Y}_{345}=\operatorname*{KOS}(Y_{B},1)$ which we know are positive semidefinite.

\begin{theorem}
\label{v3}There is an $X$ so that $\left\{  Y_{A},Y_{B}\right\}
\rightrightarrows X$ if and only if there is a $z$ so that $\mathcal{Y=}%
\operatorname*{KOS}(\left\{  Y_{A},Y_{B},z\right\}  ,1)\succcurlyeq0,$
equivalently if and only if $\mathcal{Y}$ and the submatrices $\mathcal{Y}%
_{25},$ $\mathcal{Y}_{253},$ and $\mathcal{Y}_{254}$ have positive determinants.
\end{theorem}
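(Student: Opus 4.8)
The plan is to establish the two stated equivalences separately, putting the genuine content into the first and treating the second as bookkeeping built on the discussion preceding the theorem. The tools I would lean on are Theorem \ref{general}, which says that a matrix with ones on the diagonal is of the form $\operatorname*{KOS}(DA(X),1)$ for some $X\subset\mathbb{CH}^{n}$ precisely when it is positive semidefinite (and that the distance vector $\rho(X)$ may be prescribed freely), Theorem \ref{2} on the $S^{\prime\prime}$-description of triangles, and Theorem \ref{reduction}, which recovers the congruence class of $X$ from the triangles through its base point.

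For the equivalence of assembly with the existence of a good $z$, I would argue both directions. First, if an $X=\{x_{1},\dots,x_{5}\}$ with $\{Y_{A},Y_{B}\}\rightrightarrows X$ exists, then $\operatorname*{KOS}(DA(X),1)$ is a $4\times4$ positive semidefinite matrix with ones on the diagonal whose defined entries agree with those of $\operatorname*{KOS}(\{Y_{A},Y_{B}\},1)$ (this is exactly the identity of constructed and genuine $\operatorname*{kos}$-values recorded in the Assembly and Coherence section); reading off its $(2,5)$ entry yields a value of $z$ with $\mathcal{Y}\succcurlyeq0$. Conversely, given $z$ with $\mathcal{Y}\succcurlyeq0$, Theorem \ref{general} produces an $X$ with $\mathcal{M}(X)=\mathcal{Y}$ and with $\rho(X)$ chosen to be the distances $\delta(x_{1},x_{s})$ dictated by the coherence data; these are consistent because the required congruence $\{a_{1},a_{3},a_{4}\}\sim\{b_{1},b_{3},b_{4}\}$ forces the $s=3,4$ distances coming from $Y_{A}$ and from $Y_{B}$ to coincide. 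The step I expect to be the main obstacle is verifying that this $X$ truly assembles the two sets: each triangle of $X$ through $x_{1}$ has $S^{\prime\prime}$-data $(\delta(x_{1},x_{i}),\delta(x_{1},x_{j}),\operatorname*{kos}\nolimits_{1}(i,j))$ equal to the corresponding datum of $Y_{A}$ or $Y_{B}$, so by Theorem \ref{2} those triangles are congruent, and only then does Theorem \ref{reduction} upgrade this to the tetrahedral congruences $\{x_{1},x_{2},x_{3},x_{4}\}\sim Y_{A}$ and $\{x_{1},x_{3},x_{4},x_{5}\}\sim Y_{B}$. Making this ``matching invariants force the congruences'' implication airtight is the delicate point; everything else is formal.

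For the second equivalence I would invoke Sylvester's criterion (Lemma \ref{sylvester}): $\mathcal{Y}\succcurlyeq0$ holds iff every principal minor of $\mathcal{Y}$ is nonnegative. The discussion preceding the theorem already disposes of eleven of the fifteen principal submatrices, since every index set not containing both $2$ and $5$ gives a submatrix of $\operatorname*{KOS}(Y_{A},1)$ or $\operatorname*{KOS}(Y_{B},1)$, both positive semidefinite. The surviving submatrices are exactly those whose index set contains both $2$ and $5$ — namely $\mathcal{Y}_{25}$, $\mathcal{Y}_{253}$, $\mathcal{Y}_{254}$, and $\mathcal{Y}$ itself — which are precisely the minors into which the unknown entry $z$ enters. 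Hence $\mathcal{Y}\succcurlyeq0$ is equivalent to nonnegativity of just these four determinants, and combined with the first equivalence this proves the theorem. (The word ``positive'' in the statement is to be read as the nonnegativity supplied by Lemma \ref{sylvester}, since degenerate, lower-dimensional configurations $X$ are admitted.)
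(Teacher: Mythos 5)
Your proposal is correct and follows essentially the same route as the paper, whose proof of Theorem \ref{v3} is precisely the discussion preceding its statement: the existence-of-$z$ equivalence comes from Theorem \ref{general} (with $\rho(X)$ prescribed freely and the coherence congruence of $\left\{ a_{1},a_{3},a_{4}\right\} $ with $\left\{ b_{1},b_{3},b_{4}\right\} $ making the shared entries well-defined), and Sylvester's criterion (Lemma \ref{sylvester}) reduces $\mathcal{Y}\succcurlyeq0$ to the four principal minors indexed by subsets containing both $2$ and $5$, the other eleven being principal minors of $\operatorname*{KOS}(Y_{A},1)$ or $\operatorname*{KOS}(Y_{B},1)$. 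Your added details---verifying the assembly through the $S^{\prime\prime}$ data of Theorem \ref{2} together with condition (5) of Theorem \ref{reduction}, and reading ``positive'' as the nonnegativity that Sylvester's criterion actually requires---only make explicit what the paper leaves implicit.
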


\section{Tetrahedra\label{43}}

In the previous section we considered sets of $n+1$ points. For $n=2$ those
results give the equivalence of Conditions (1), (5), and (6) of Theorem
\ref{2}. Now we consider $n=3$. We will make the previous conditions more
explicit and give answers to Questions 1 and 2 of the introduction. We also
use the results to analyze a family of four dimensional RKHS introduced by Quiggin.

\subsection{Question 1\label{question 1}}

We want to know if a set of four triangles in $\mathbb{CH}^{n},$ $\left\{
T_{i}\right\}  _{1}^{4},$ might be congruent to the four faces of a
tetrahedron $X=\left\{  x_{i}\right\}  _{1}^{4}\subset\mathbb{CH}^{3}.$
Actually it suffices to consider only three triangles and we begin by
presenting the argument for that.

\begin{lemma}
If $\left\{  T_{i}\right\}  _{i=1}^{4}\rightrightarrows X$ then the congruence
class of $X$ determines and is determined by the congruence classes of the
$\left\{  T_{i}\right\}  _{i=1}^{4}.$
\end{lemma}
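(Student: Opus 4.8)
The plan is to prove the two implications separately, deferring the substantive content to Theorem \ref{reduction}. For the easy direction, that the congruence class of $X$ determines those of the $\{T_i\}$, I would argue straight from the definition of congruence. If $X'$ is another tetrahedron congruent to $X$, say $\phi(X)=X'$ for some $\phi\in\operatorname{Aut}\mathbb{B}_n$ respecting the vertex labeling, then $\phi$ restricts to a ball automorphism carrying each face of $X$ to the corresponding face of $X'$. Hence each $T_i$ is congruent to its counterpart, so the four congruence classes depend only on the congruence class of $X$.

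For the converse, that the congruence classes of the $\{T_i\}$ determine that of $X$, I would invoke condition (5) of Theorem \ref{reduction}. Since every $DA(X)$ lies in $\mathcal{CPP}$, that theorem applies to our tetrahedron with $n=4$. Label the vertices so that the base point is $x_1$. Of the four faces, exactly three contain $x_1$, namely $\{x_1,x_2,x_3\}$, $\{x_1,x_2,x_4\}$, and $\{x_1,x_3,x_4\}$; these are precisely the triangles $\{x_1,x_i,x_j\}$ with $1<i<j\le 4$ that appear in condition (5). That condition asserts that two four-point sets are congruent if and only if the corresponding triangles through $x_1$ are congruent, so the congruence classes of these three faces already pin down the congruence class of $X$, and a fortiori so does the full collection $\{T_i\}_{i=1}^{4}$. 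Here one must take care that the assembly scheme $\{T_i\}\rightrightarrows X$ matches the vertex ordering used in condition (5), but this is exactly the bookkeeping built into the scheme, which pairs $y_{jk}$ with $x_k$.

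Combining the two implications gives the lemma. I expect no real obstacle, since Theorem \ref{reduction} supplies the essential content and the remaining work is purely organizational. The one observation worth recording is that the fourth face, the one opposite $x_1$, contributes nothing new: being $\{x_2,x_3,x_4\}$, it is determined by $X$ and hence by the other three faces, so its congruence class is forced by the coherence data rather than supplied independently. This is precisely what licenses the subsequent remark that it suffices to work with three triangles.
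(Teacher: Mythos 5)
Your proposal is correct and takes essentially the same route as the paper: the substantive direction is exactly the paper's citation of Condition (5) of Theorem \ref{reduction} applied to the three faces through $x_{1}$, with the same bookkeeping point that the assembly scheme fixes the vertex pairings. For the easy direction the paper does more than your direct automorphism argument (which suffices for the lemma as stated) — it shows the face data can be read off even from the $\rho(X)$, $\mathcal{M}(X)$ description of Theorem \ref{general} and defers the fourth face to the following lemma, which is precisely the observation you record at the end about the fourth face being forced by the other three.
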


\begin{proposition}
\begin{proof}
That the congruence class of the faces determines the congruence class of the
tetrahedron is Condition (5) of Theorem \ref{reduction}. In the other
direction if we have the coordinates of $X$ then we can read off the
coordinate description of the triangular faces. If we only know the
description of $X$ as given in Theorem \ref{general} -- the designation of a
distinguished vertex $x_{1}$,\ the vector $\rho(X),$ and the matrix
$\mathcal{M}(X)$ -- then we can read off the data sets $S^{\prime}$ for the
three triangular faces of $X$ which meet at $x_{1}.$ The needed side lengths
are entries of $\rho(X)$ and the values of kos at the bivalent vertices at
$x_{1}$ are entries of $\mathcal{M}(X).$ By Theorem \ref{triangle} those data
sets determine the congruence class of the three triangles. To find the
congruence class of the fourth triangle we first note coherence properties the
set of four triangular faces must satisfy; various pairs of sides must have
the same lengths, and the angular invariants must satisfy the cocycle
condition (\ref{kkocycle}). The proof is completed by the next lemma which
shows that that data also suffices to determines the congruence class of the
fourth face.
\end{proof}
\end{proposition}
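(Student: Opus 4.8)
The plan is to establish the biconditional in two halves, treating ``the faces determine $X$'' as a direct appeal to Theorem \ref{reduction} and spending the real effort on making the opposite direction explicit in the parametric language of Theorem \ref{general}.

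By Condition (5) of Theorem \ref{reduction}, two tetrahedra whose three faces at the anchor vertex $x_{1}$ are respectively congruent are themselves congruent; since those three faces are among the four given, their congruence classes---and a fortiori those of all four faces---determine the congruence class of $X$. The reverse direction, that $X$ determines the $\{T_{i}\}$, is abstractly immediate, since each face is a subset of $X$ and any automorphism carrying $X$ to a congruent tetrahedron restricts to congruences of the faces. The substantive content, and the reason the auxiliary lemma is needed, is to recover all four faces directly from the data of Theorem \ref{general}: the distinguished vertex $x_{1}$, the vector $\rho(X)$, and the matrix $\mathcal{M}(X)$.

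From that data I would first read off, for each pair $i,j\in\{2,3,4\}$, the data set $S''=\{\delta_{1i},\delta_{1j},\operatorname*{kos}\nolimits_{1}(i,j)\}$ of the face $\{x_{1},x_{i},x_{j}\}$, the distances being entries of $\rho(X)$ and the $\operatorname*{kos}$ values entries of $\mathcal{M}(X)$; by Theorem \ref{triangle} this fixes the congruence class of each of the three faces at $x_{1}$. Converting each $S''$ to the corresponding $S'$ as in Section \ref{invariants}, using (\ref{length}), also produces the third side length and the angular invariant of each of these faces.

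The main obstacle is the remaining face $\{x_{2},x_{3},x_{4}\}$, which avoids $x_{1}$ and so is not directly visible in $\rho(X)$ and $\mathcal{M}(X)$; recovering it is the content of the lemma invoked at the end, and I would argue via the $S'$ criterion of Theorem \ref{triangle}. Its three side lengths $\delta_{23},\delta_{24},\delta_{34}$ are exactly the sides opposite $x_{1}$ in the three faces at $x_{1}$, hence are already determined by the previous step, while its angular invariant is supplied by the cocycle identity (\ref{kkocycle}), rewritten as
\[
\alpha(2,3,4)=\alpha(1,2,3)+\alpha(3,4,1)-\alpha(4,1,2),
\]
whose right-hand terms, by the cyclic symmetry of (\ref{def ang}), are the angular invariants $\alpha_{123},\alpha_{134},\alpha_{124}$ of the three faces already treated. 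The delicate point is less the algebra than a consistent choice of branch of $\arg$: the observation after (\ref{def ang}) that $\operatorname{Re}k>0$ on the kernels of $DA(Z)$ is what renders each $\alpha$ and the cocycle unambiguous, so that the value produced for $\alpha(2,3,4)$ is the genuine signed angular invariant. With $S'=\{\delta_{23},\delta_{24},\delta_{34},\alpha(2,3,4)\}$ assembled, Theorem \ref{triangle} returns the congruence class of the fourth face and completes the argument.
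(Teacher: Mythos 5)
Your proposal is correct and follows the paper's own route: Condition (5) of Theorem \ref{reduction} for the direction in which the faces determine $X$, the side lengths from $\rho(X)$ and $\operatorname*{kos}$ values from $\mathcal{M}(X)$ fed into Theorem \ref{triangle} for the three faces at $x_{1}$, and matching side lengths together with the cocycle identity (\ref{kkocycle}) to pin down the fourth face via the $S'$ criterion. The only difference is presentational: the paper delegates the fourth-face step to the subsequent lemma, whose proof is precisely your rearranged cocycle argument, whereas you carry it out inline and additionally make explicit the $S''\to S'$ conversion of Section \ref{invariants} (including the branch-of-$\arg$ point) that the paper leaves implicit.
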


\begin{proposition}
\label{onlythree}Suppose $\left\{  T_{i}\right\}  _{1}^{4}$ is a set of four
triangles in $\mathbb{CH}^{n}$ which satisfy the matching side length
conditions and the cocycle condition (\ref{kkocycle}) then
\end{proposition}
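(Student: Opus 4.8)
The plan is to prove the (evidently intended) conclusion that the congruence class of the remaining triangle is determined by the congruence classes of the other three. The route is to reconstruct the full data set $S^{\prime}$ of that triangle -- its three pseudohyperbolic side lengths together with its angular invariant -- from the other three triangles, and then to invoke Brehm's criterion (Theorem \ref{2}), which asserts that $S^{\prime}$ determines the congruence class of a triangle in $\mathbb{CH}^{n}$. First I would fix the labelling so that three of the four triangles meet at a common distinguished vertex $x_{1}$, say with vertices $\{x_{1},x_{2},x_{3}\}$, $\{x_{1},x_{3},x_{4}\}$, and $\{x_{1},x_{2},x_{4}\}$, while the fourth triangle is the opposite face $\{x_{2},x_{3},x_{4}\}$.

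For the side lengths I would observe that each of the three sides $x_{2}x_{3}$, $x_{3}x_{4}$, $x_{2}x_{4}$ of the opposite face is also a side of exactly one of the three faces at $x_{1}$. Hence the matching side length hypothesis forces the three lengths $\delta(x_{2},x_{3})$, $\delta(x_{3},x_{4})$, $\delta(x_{2},x_{4})$ to be read off from the ordered congruence classes of the three faces at $x_{1}$, so they are determined.

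For the angular invariant I would recover $\alpha(2,3,4)$ from the cocycle identity (\ref{kkocycle}). Using the cyclic invariance of $\alpha$ one has $\alpha(3,4,1)=\alpha(1,3,4)$ and $\alpha(4,1,2)=\alpha(1,2,4)$, so (\ref{kkocycle}) rearranges to
\[
\alpha(2,3,4)=\alpha(1,2,3)+\alpha(1,3,4)-\alpha(1,2,4).
\]
The three quantities on the right are precisely the angular invariants of the three faces at $x_{1}$, and so are determined by their congruence classes. (Here one invokes the remark after (\ref{def ang}) that for kernel functions of $DA(Z)$ one has $\operatorname{Re}k>0$, so no branch of $\arg$ need be chosen and each $\alpha$ is an honest real number.) With all four entries of $S^{\prime}$ in hand, Theorem \ref{2} determines the congruence class of the opposite face; by the symmetry of the argument the same conclusion holds for any one of the four triangles in terms of the other three, since each face is the opposite face of exactly one vertex.

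The step I expect to require the most care is the bookkeeping of the permutation symmetries of $\alpha$ -- cyclic invariance together with the sign change under a transposition -- needed to check that (\ref{kkocycle}) isolates $\alpha(2,3,4)$ exactly in terms of the three angular invariants based at $x_{1}$. Beyond that I do not anticipate a genuine obstacle: there is no existence or admissibility question to resolve, because the fourth triangle is assumed from the outset to be an actual triangle, so the recovered $S^{\prime}$ automatically satisfies the constraint (\ref{aaa}) of Theorem \ref{2}.
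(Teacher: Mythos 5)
Your proof is correct and follows the paper's own route exactly: recover the side lengths of the fourth triangle from the matching side length conditions, recover its angular invariant from the cocycle identity (your explicit rearrangement $\alpha(2,3,4)=\alpha(1,2,3)+\alpha(1,3,4)-\alpha(1,2,4)$, using cyclic invariance of $\alpha$, is precisely the step the paper leaves implicit), and then invoke Theorem \ref{2} with the data set $S^{\prime}$, noting as the paper does that no admissibility check on (\ref{aaa}) is needed since $T_{4}$ is assumed to exist. One caveat: the statement as completed in the paper (its conclusions were displaced into the following lemma environment) contains two further parts --- that three faces of an actual tetrahedron determine the fourth, and that $\left\{ T_{i}\right\} _{1}^{3}\rightrightarrows R_{3}$ already yields $\left\{ T_{i}\right\} _{1}^{4}\rightrightarrows R_{4}$ --- both of which the paper deduces quickly from the part you proved together with the observation that the faces of a tetrahedron automatically satisfy the matching and cocycle hypotheses, so your argument supplies the core determination but does not address those two companion assertions.
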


\begin{lemma}
\begin{enumerate}
\item The congruence classes of $\left\{  T_{i}\right\}  _{1}^{3}$ determines
the congruence class of $T_{4}$.

\item The congruence classes of three triangular faces $\left\{
F_{i}\right\}  _{1}^{3}$ of a tetrahedron determines the congruence class of
the fourth face $F_{4}.$

\item There is a tetrahedron $R_{4}$ with $\left\{  T_{i}\right\}  _{1}%
^{4}\rightrightarrows R_{4}$ if and only if there is a tetrahedron $R_{3} $
with $\left\{  T_{i}\right\}  _{1}^{3}\rightrightarrows R_{3}.$
\end{enumerate}
\end{lemma}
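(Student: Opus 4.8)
The plan is to reduce the entire lemma to two ingredients already in hand: the triangle congruence criterion of Theorem \ref{triangle} through the data set $S'$ (three side lengths plus one angular invariant), and the cocycle identity (\ref{kkocycle}) for the angular invariant. Between them these let me reconstruct the congruence class of the ``missing'' fourth face from the other three, after which all three assertions follow quickly.

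First I would prove (1). Order the vertices as $x_1,\dots,x_4$ and take $T_i$ to be the face opposite $x_i$, so $T_4=\{x_1,x_2,x_3\}$ while $T_1,T_2,T_3$ are $\{x_2,x_3,x_4\},\{x_1,x_3,x_4\},\{x_1,x_2,x_4\}$. Each of the three sides of $T_4$, namely $x_1x_2$, $x_1x_3$, $x_2x_3$, appears in one of $T_3,T_2,T_1$ respectively, so the matching side length conditions pin down all three pseudohyperbolic side lengths of $T_4$. It remains to recover the angular invariant of $T_4$. Since $\alpha$ is alternating, hence invariant under cyclic permutation, the four face invariants are $\alpha(2,3,4),\alpha(1,3,4),\alpha(1,2,4),\alpha(1,2,3)$, and (\ref{kkocycle}) rearranges (using $\alpha(3,4,1)=\alpha(1,3,4)$ and $\alpha(4,1,2)=\alpha(1,2,4)$) to $\alpha(1,2,3)=\alpha(2,3,4)-\alpha(1,3,4)+\alpha(1,2,4)$. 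Thus the angular invariant of $T_4$ is a fixed signed combination of those of $T_1,T_2,T_3$, so the congruence classes of $T_1,T_2,T_3$ determine the full $S'$ data of $T_4$; by Theorem \ref{triangle} they determine its congruence class.

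Part (2) is the specialization of (1) to a genuine tetrahedron: the four faces $\{F_i\}$ automatically share edges, so the matching side length conditions hold, and their angular invariants automatically satisfy (\ref{kkocycle}), which is an identity valid for any four kernel functions. Hence (1) applies to $\{F_i\}$ and $F_1,F_2,F_3$ determine $F_4$. For (3), the forward implication is immediate: if $\{T_i\}_1^4\rightrightarrows R_4$ then the first three triangles already embed, so $R_3=R_4$ works. For the converse, suppose $\{T_i\}_1^3\rightrightarrows R_3$; then $R_3$ is a tetrahedron three of whose faces realize $T_1,T_2,T_3$ under the prescribed scheme, and we let $F_4$ be its fourth face. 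By (2) the side lengths of $F_4$ are the matched edge lengths, which agree with the prescribed sides of $T_4$, and its angular invariant equals $\alpha(2,3,4)-\alpha(1,3,4)+\alpha(1,2,4)$, which by the cocycle hypothesis imposed on $\{T_i\}_1^4$ is exactly the angular invariant of $T_4$. Hence $F_4\cong T_4$ by Theorem \ref{triangle}, so all four triangles embed in $R_3$ and $\{T_i\}_1^4\rightrightarrows R_3$.

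I expect the only delicate point to be the orientation and labeling bookkeeping, which is the $\widetilde{T_3}$ subtlety noted earlier: in $\mathbb{CH}^n$ reversing the order of a triangle's vertices conjugates $\operatorname*{kos}$ and negates $\alpha$, so one must fix consistent orientations of the four faces as faces of the prospective tetrahedron in order that the matching side length conditions and the signs appearing in the rearranged cocycle line up correctly. Once that convention is fixed the whole argument is a direct application of Theorem \ref{triangle} together with the identity (\ref{kkocycle}), with the vector $\rho$ playing no role.
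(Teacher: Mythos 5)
Your proposal is correct and follows the paper's own argument essentially verbatim: matching side lengths plus the cocycle identity (\ref{kkocycle}) recover the $S'$ data of $T_4$, Theorem \ref{triangle} then fixes its congruence class, and parts (2) and (3) follow by specializing to the faces of a tetrahedron and comparing the two identical determinations of the fourth face. Your explicit rearrangement $\alpha(1,2,3)=\alpha(2,3,4)-\alpha(1,3,4)+\alpha(1,2,4)$ and the closing remark on orientation conventions merely make precise what the paper leaves implicit.
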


\begin{proof}
For the first statement, if we know the side lengths of $\left\{
T_{i}\right\}  _{1}^{3}$ then using the matching side length condition we also
know the side lengths of $T_{4}$. By the cocycle condition (\ref{kkocycle})
the values of the angular invariants for the $\left\{  T_{i}\right\}  _{1}%
^{3}$ determines the angular invariant for $T_{4}.$ Hence by the results in
Theorem \ref{triangle} for the data set $S^{\prime}$ the congruence class of
$T_{4}$ is determined.

The second statement follows because the collection of faces of a tetrahedron
automatically satisfy the matching side length conditions and the cocycle condition.

One half of the third statement is automatic. In the other direction, suppose
we are given $\left\{  T_{i}\right\}  _{i=1}^{4}$ and know$\left\{
T_{i}\right\}  _{i=1}^{3}\rightrightarrows R_{3}$ for a tetrahedron $R_{3}.$
Denote the faces of $R_{3}$ by $\left\{  F_{i}\right\}  _{i=1}^{4}$ with the
first three congruent to the $\left\{  T_{i}\right\}  _{i=1}^{3}.$ By the
second statement the congruence class of $F_{4}$ is determined by the
$\left\{  F_{i}\right\}  _{i=1}^{3}$. By the first statement the congruence
class of $T_{4}$ is determined by the $\left\{  T_{i}\right\}  _{i=1}^{3}.$
Furthermore, those two determinations use the same argument, in one case with
data from the $\left\{  T_{i}\right\}  $ in the other case data from the
$\left\{  F_{i}\right\}  .$ But the $\left\{  T_{i}\right\}  $ and the
$\left\{  F_{i}\right\}  $ are congruent so those two data sates are the same.
Hence $T_{4}\sim F_{4}$ and hence $R_{3}$ is the required $R_{4}.$
\end{proof}

We now make two reductions which simplify Question 1. First, taking note of
the previous lemma, we can replace $\left\{  T_{i}\right\}  _{1}^{4}$ which
satisfy a matching side length condition and a cocycle condition with a set of
three triangles $\left\{  T_{2},T_{3},T_{4}\right\}  $ which satisfy a
matching side length condition. Second, by Proposition \ref{vertex}, knowing
if those three triangles can be assembled as faces of a tetrahedron is
independent of knowing the side lengths and is determined by the values of
$\operatorname*{kos}$ at the distinguished vertices.

Thus we are reduced to a situation of being told $\left\{  T_{i}\right\}
_{i=1}^{3}\rightrightarrows\,??$ and asking if there is an tetrahedron $X$
with $\left\{  T_{i}\right\}  _{i=1}^{3}\rightrightarrows X$. That notation
includes the assumption that the congruence relations between the vertices of
the triangles, or between the subspaces of the associated $DA$ spaces, have
been specified. We will describe those relations in detail for this particular question.

We start with three triangles $\left\{  T_{i}\right\}  _{i=2}^{4}$ in
$\mathbb{CH}^{k}$ and we denote their coordinates by $T_{i}=$ $\left\{
t_{i1},t_{ia},t_{ib}\right\}  .$ We want to know if we can assemble the
$\left\{  T_{i}\right\}  _{i=2}^{4}$ into a tetrahedron $X=\left\{
x_{i}\right\}  _{i=1}^{4}$ using congruences which take $\left\{
t_{21},t_{2a},t_{2b}\right\}  $ to the triangular face $\left\{  x_{1}%
,x_{2},x_{3}\right\}  ,$ $\left\{  t_{31},t_{3a},t_{3b}\right\}  $ to the
triangular face $\left\{  x_{1},x_{3},x_{4}\right\}  ,$ and $\left\{
t_{41},t_{4a},t_{4b}\right\}  $ to the triangular face $\left\{  x_{1}%
,x_{4},x_{2}\right\}  .$ For that to happen certain points must be identified,
for instance $t_{21},$ $t_{31},$ and $t_{41}$ would be identified at the point
$x_{1}.$ Also certain side lengths will match, for instance because two
triangle sides will coincide as the edge $x_{1}x_{2}$ of $X$ we must have
$\delta(t_{21},t_{2a})=\delta(t_{41},t_{4b}).$

If there is no $X$ then the $\left\{  x_{i}\right\}  $ are only bookkeeping
symbols but the same considerations lead to the same coherence rules.

We now use Theorem \ref{general} to see that $\left\{  T_{i}\right\}  _{2}%
^{4}\rightrightarrows X$ $\ $if and only if $\mathcal{M}\left(  \left\{
T_{i}\right\}  \right)  \succcurlyeq0$. First, suppose $\mathcal{M}\left(
\left\{  T_{i}\right\}  \right)  \succcurlyeq0.$ The side length data from the
$\left\{  T_{i}\right\}  $ is enough to construct $\rho^{\ast}$ the candidate
for $\rho(X).$ By the second statement of that theorem, given that
$\mathcal{M}(\left\{  T_{i}\right\}  )$ $\succcurlyeq0$ then there is an $X$
with $\mathcal{M}\left(  \left\{  T_{i}\right\}  \right)  =\mathcal{M(}%
X\mathcal{)}$ and $\rho^{\ast}=\rho(X).$ From $\mathcal{M}\left(  \left\{
T_{i}\right\}  \right)  =\mathcal{M(}X\mathcal{)}$ and $\rho^{\ast}=\rho(X)$
and the condition in Theorem \ref{reduction} on the data set $S^{\prime\prime
}$ we see that the faces of $X$ are congruent to the $\left\{  T_{i}\right\}
$ and thus $\left\{  T_{i}\right\}  _{2}^{4}\rightrightarrows X.$ In the other
direction, if we knew $\left\{  T_{i}\right\}  _{2}^{4}\rightrightarrows X$ we
would know $\mathcal{M}\left(  \left\{  T_{i}\right\}  \right)  =\mathcal{M(}%
X\mathcal{)}$ and hence $\mathcal{M}\left(  \left\{  T_{i}\right\}  \right)
\succcurlyeq0$.

The entries of $\mathcal{M(}X)$ would be values of $K_{ij}=\operatorname*{kos}%
(\mathbf{V}_{ij})$ for the bivalent vertices of $X$ at $x_{1}.$ If $\left\{
T_{i}\right\}  _{2}^{4}\rightrightarrows X$ then those vertices are congruent
to vertices of the $\left\{  T_{i}\right\}  _{2}^{4}.$ Specifically, writing
$\operatorname*{kos}(T_{i})$ for the value of $\operatorname*{kos}$ at the
vertex $t_{i1}$ in the triangle $T_{i}$ we would have $K_{23}%
=\operatorname*{kos}(T_{2}),$ $\ K_{34}=\operatorname*{kos}(T_{3}),$ and
$K_{24}=\overline{\operatorname*{kos}(T_{4})}.$ ( The final complex
conjugation because $T_{4}$ was placed "backwards" in terms of our numbering
of the edges.). Thus $\mathcal{M(}\left\{  T_{i}\right\}  \mathcal{)}$ is
given by%
\begin{equation}
\mathcal{M(}\left\{  T_{i}\right\}  \mathcal{)}=%
\begin{pmatrix}
1 & \operatorname*{kos}(T_{2}) & \overline{\operatorname*{kos}(T_{4})}\\
\overline{\operatorname*{kos}(T_{2})} & 1 & \operatorname*{kos}(T_{3})\\
\operatorname*{kos}(T_{4}) & \overline{\operatorname*{kos}(T_{3})} & 1
\end{pmatrix}
. \label{mt}%
\end{equation}
We will be interested in positivity properties of matrices of that form.

\begin{lemma}
\label{matrix condition}Suppose we have $\left\{  a_{i}\right\}  _{i=1}%
^{3}\subset\mathbb{C}$ and%
\[
\mathcal{N}=%
\begin{pmatrix}
1 & a_{1} & a_{2}\\
\overline{a_{1}} & 1 & a_{3}\\
\overline{a_{2}} & \overline{a_{3}} & 1
\end{pmatrix}
.
\]
Suppose that for some $i$ $\left\vert a_{i}\right\vert <1$ or that for all $i
$ $\left\vert a_{i}\right\vert \leq1.$ The following are equivalent:

\begin{enumerate}
\item $0\preccurlyeq\mathcal{N}$,

\item $0\leq\det\mathcal{N},$

\item $0\leq1+2\operatorname{Re}a_{1}\overline{a_{2}}a_{3}-\left\vert
a_{1}\right\vert ^{2}-\left\vert a_{2}\right\vert ^{2}-\left\vert
a_{3}\right\vert ^{2},$

\item $\left\vert a_{1}\overline{a_{2}}-a_{3}\right\vert ^{2}\leq(1-\left\vert
a_{1}\right\vert ^{2})(1-\left\vert a_{2}\right\vert ^{2}).$
\end{enumerate}

\begin{proof}
By Sylvester's criterion the first condition implies the second. The second,
third conditions are equivalently definition. That the third and fourth are
equivalent can be seen by expanding both sides of the fourth statement giving%
\[
|a_{1}\overline{a_{2}}|^{2}-2\operatorname{Re}a_{1}\overline{a_{2}}%
a_{3}+\left\vert a_{3}\right\vert ^{2}\leq1-\left\vert a_{1}\right\vert
^{2}-\left\vert a_{2}\right\vert ^{2}+\left\vert a_{1}a_{2}\right\vert ^{2}.
\]
Cancellation and rearrangement shows that is equivalent to the third statement.

To go back to the first statement we want to use Sylvester's criterion. That
states that the first statement is a consequence of the nonnegativity of the
seven principal minors. Three are the determinants of the $1\times1$ matrices
given by the diagonal entries and they are positive, One is the positivity of
$\det\mathcal{N}$ which is insured by the second statement. The other three
requirements are equivalent to the conditions that each $\left\vert
a_{i}\right\vert \leq1$ and hence if we had assumed the $\left\vert
a_{i}\right\vert \leq1$ then we would be done. Otherwise note that the right
hand side of the third statement is nonnegative. That lets us see that if, for
instance, $\left\vert a_{1}\right\vert <1$ then $\left\vert a_{2}\right\vert
\leq1$ and a similar argument applies to $a_{3}.$ Thus we have reduced to the
case of all $\left\vert a_{i}\right\vert \leq1.$
\end{proof}
\end{lemma}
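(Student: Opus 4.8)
The plan is to dispose of the purely algebraic equivalences among (2), (3), and (4) first, and then to concentrate on the one implication with genuine content, namely the passage from a scalar inequality back to positive semidefiniteness.

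First I would expand $\det\mathcal{N}$ by cofactors along the first row. Because $\mathcal{N}$ is Hermitian with unit diagonal, this produces exactly $1-|a_1|^2-|a_2|^2-|a_3|^2+2\operatorname{Re}(a_1\overline{a_2}a_3)$, so that (2) and (3) are literally the same inequality. To link (3) with (4) I would expand the modulus square on the left of (4) and the product on the right, cancel the common $|a_1a_2|^2$ term, and rearrange; what remains is precisely the inequality in (3). Neither of these two steps uses any hypothesis on the sizes of the $a_i$.

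The implication $(1)\Rightarrow(2)$ is immediate, since a positive semidefinite matrix has nonnegative determinant (Lemma \ref{sylvester}). The heart of the lemma is the converse $(2)\Rightarrow(1)$, for which I would again appeal to Sylvester's criterion: $\mathcal{N}\succcurlyeq0$ exactly when all seven principal minors are nonnegative. The three $1\times1$ minors are the diagonal entries, each equal to $1$; the single $3\times3$ minor is $\det\mathcal{N}$, nonnegative by (2). Thus everything reduces to the three $2\times2$ principal minors $1-|a_1|^2$, $1-|a_2|^2$, and $1-|a_3|^2$; that is, to showing $|a_i|\le1$ for every $i$. If all the $|a_i|\le1$ by hypothesis there is nothing left to do.

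The step I expect to be the \emph{main obstacle} is the remaining case, in which only one magnitude, say $|a_1|<1$, is known to be small. Here I would use that the left-hand side of (4) is a modulus square and hence nonnegative, which forces its right-hand side $(1-|a_1|^2)(1-|a_2|^2)$ to be nonnegative as well; since $1-|a_1|^2>0$, this yields $1-|a_2|^2\ge0$, i.e.\ $|a_2|\le1$. Running the same rearrangement of $\det\mathcal{N}\ge0$ relative to the index pair $\{1,3\}$ (legitimate because $\det\mathcal{N}$ is unchanged under simultaneous permutation of rows and columns) gives the companion inequality whose nonnegative right-hand side $(1-|a_1|^2)(1-|a_3|^2)$ forces $|a_3|\le1$ as well. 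All three $2\times2$ minors are then nonnegative, and Sylvester's criterion delivers (1). The only point needing care is verifying that the determinant identity really does rearrange into such a pairing inequality for each of the three index pairs, not merely for the pair $\{1,2\}$ displayed in (4); the permutation symmetry of $\det\mathcal{N}$ makes this routine once it is noticed.
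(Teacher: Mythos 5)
Your proof is correct and takes essentially the same route as the paper's: cofactor expansion identifies (2) with (3), expansion and cancellation identify (3) with (4), and Sylvester's criterion reduces $(2)\Rightarrow(1)$ to the three $2\times2$ minors, which you obtain from the nonnegativity of the left-hand side of (4) together with its permuted companions when only one strict bound $\left\vert a_{i}\right\vert <1$ is assumed. The paper argues identically, merely leaving the permutation-symmetry step implicit in the phrase ``a similar argument applies to $a_{3}$,'' which you spell out explicitly.
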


We have collected all of the pieces to answer Question 1.

\begin{theorem}
\label{Q 1}With the numbering and naming scheme just described

\begin{enumerate}
\item If $\left\{  T_{i}\right\}  \rightrightarrows X$ then $\mathcal{M}%
(\left\{  T_{i}\right\}  )=\mathcal{M}(X)\succcurlyeq0$.

\item If $\mathcal{M}(\left\{  T_{i}\right\}  )$ $\succcurlyeq0$ then there is
an $X$ with $\left\{  T_{i}\right\}  \rightrightarrows X$ in which case
$\mathcal{M}(\left\{  T_{i}\right\}  )=\mathcal{M}(X).$

\item The previous lemma applies to the statements $\mathcal{M}(\left\{
T_{i}\right\}  )\succcurlyeq0$ and $\mathcal{M}(X)\succcurlyeq0.$ In
particular $\mathcal{M}(X)\succcurlyeq0$ if and only if
\begin{equation}
\left\vert K_{34}-\overline{K_{23}}K_{24}\right\vert ^{2}\leq(1-\left\vert
K_{23}\right\vert ^{2})(1-\left\vert K_{24}\right\vert ^{2}). \label{hopw}%
\end{equation}

\end{enumerate}
\end{theorem}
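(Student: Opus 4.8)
The plan is to recognize that parts (1) and (2) are, in effect, already established in the discussion preceding the statement, so the proof is largely a matter of collecting earlier results; the one place that requires genuine (if modest) attention is part (3), which I would obtain by a direct application of Lemma \ref{matrix condition} to the explicit matrix (\ref{mt}).

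For part (1) I would invoke the forward direction of the equivalence set up just before the statement: if $\{T_i\}\rightrightarrows X$, then the bivalent vertices of $X$ at $x_1$ are congruent to the distinguished vertices of the $T_i$, so the values $\operatorname*{kos}(T_2),\operatorname*{kos}(T_3),\operatorname*{kos}(T_4)$ computed from the $T_i$ fill the entries $K_{23},K_{34},K_{24}$ of $\mathcal{M}(X)$ as displayed in (\ref{mt}); this gives $\mathcal{M}(\{T_i\})=\mathcal{M}(X)$, and $\mathcal{M}(X)\succcurlyeq0$ is then immediate from part (1) of Theorem \ref{general}. For part (2) I would run the converse: assuming $\mathcal{M}(\{T_i\})\succcurlyeq0$, I form the candidate vector $\rho^{\ast}$ from the matched side lengths and apply part (2) of Theorem \ref{general} to produce an $X$ with $\mathcal{M}(X)=\mathcal{M}(\{T_i\})$ and $\rho(X)=\rho^{\ast}$. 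The $S''$ congruence criterion in Theorem \ref{triangle}, together with the reduction in Theorem \ref{reduction}, then certifies that the faces of $X$ are congruent to the $T_i$, that is, $\{T_i\}\rightrightarrows X$.

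For part (3) the substantive step is to match $\mathcal{M}(X)$, in its explicit form (\ref{mt}), against the matrix $\mathcal{N}$ of Lemma \ref{matrix condition}. Reading off the superdiagonal entries gives $a_1=K_{23}$, $a_2=K_{24}$, $a_3=K_{34}$; here I would be careful with the conjugation on $\operatorname*{kos}(T_4)$ coming from the ``backwards'' placement of $T_4$ and with which $\operatorname*{kos}$ value sits in which off-diagonal slot, so that the lemma's condition (4), equivalently its determinant form condition (3), specializes to (\ref{hopw}) in the stated orientation rather than to a conjugated or transposed variant. Before the lemma applies I must check its standing hypothesis, that some $|a_i|<1$ or all $|a_i|\le1$: since each $a_i$ is a value of $\operatorname*{kos}$ at a bivalent vertex, estimate (\ref{kos estimate}) of Theorem \ref{triangle} gives $|a_i|\le1$ for every $i$, so the hypothesis holds outright. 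With that in hand, the equivalence of conditions (1), (3), and (4) in Lemma \ref{matrix condition} yields $\mathcal{M}(X)\succcurlyeq0 \Longleftrightarrow (\ref{hopw})$.

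Because the geometric content has been front-loaded into Theorem \ref{general}, Lemma \ref{matrix condition}, and the assembly discussion, I do not expect a real obstacle. The single point deserving care is the role of the hypothesis $|a_i|\le1$: it is exactly this bound, supplied by (\ref{kos estimate}), that makes the full Sylvester chain collapse to the single determinant inequality, so that $\det\mathcal{M}(X)\ge0$ already forces $\mathcal{M}(X)\succcurlyeq0$. Verifying that bound, and keeping the index bookkeeping of (\ref{mt}) straight, is the whole of the work.
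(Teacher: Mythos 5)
Your proposal is correct and follows essentially the same route as the paper: parts (1) and (2) come from the assembly discussion preceding the theorem together with Theorem \ref{general}, and part (3) comes from Lemma \ref{matrix condition} combined with the bound $|K_{ij}|\leq 1$ supplied by statement (6) of Theorem \ref{triangle} (i.e.\ (\ref{kos estimate})), which is exactly the paper's citation. Your extra care with the conjugation bookkeeping in (\ref{mt}) so that the lemma specializes to (\ref{hopw}) in the stated orientation is a sound refinement of the same argument, not a departure from it.
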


\begin{proof}
The first two statements follow from the discussion before the theorem
together with Theorem \ref{general}. The third statement follows from the
previous lemma together with statement (6) of Theorem \ref{2} which insures
that each $\left\vert K_{ij}\right\vert \leq1.$
\end{proof}

The condition (\ref{hopw}) was obtained by specializing general results. It is
interesting to also see how it follows from working directly with coordinates.
We can assume $X=\Delta$ as described in (\ref{tetra}) in which case the
$K_{ij}$ can be computed using the points $\{\widehat{x_{i}}\}_{i=2}^{4}$ on
the unit sphere with coordinates%

\begin{align}
&  \widehat{x}_{2}=(1,0,0),\text{ }\widehat{x}_{3}=(\xi,\beta,0),\text{
}\widehat{x}_{4}=(\eta,\zeta,\gamma);\nonumber\\
&  \beta,\gamma\geq0;\text{ }\xi,\eta,\zeta\in\mathbb{C};\text{ }\nonumber\\
&  \left\vert \xi\right\vert ^{2}+\beta^{2}=\left\vert \eta\right\vert
^{2}+\left\vert \zeta\right\vert ^{2}+\gamma^{2}=1. \label{gamma}%
\end{align}

\noindent For $2\leq i,j\leq4$ we have $K_{ii}=1,$ $K_{ij}=\overline{K_{ji}}.$
The rest of the story is given by%
\[
K_{23}=\bar{\xi},\text{ \ }K_{24}=\bar{\eta},\text{ \ }K_{34}=\xi\bar{\eta
}+\beta\bar{\zeta}.
\]
We also have
\[
\left\vert \beta\right\vert ^{2}=1-\left\vert K_{23}\right\vert ^{2};\text{
}\left\vert \bar{\zeta}\right\vert ^{2}=1-\left\vert K_{24}\right\vert
^{2}-\gamma^{2}.
\]
Hence, noting that for all $i,j,$ $\left\vert K_{ij}\right\vert \leq1,$ we
must have%
\begin{equation}
\left\vert K_{34}-\overline{K_{23}}K_{24}\right\vert ^{2}\leq(1-\left\vert
K_{23}\right\vert ^{2})(1-\left\vert K_{24}\right\vert ^{2}). \label{cauchy 2}%
\end{equation}
which is (\ref{hopw}).

In the other direction it is not hard to start from $\left\{  K_{ij}\right\}
$ which satisfy these conditions and find coordinates of points on the sphere
which generate this data.

The first two statements in Theorem \ref{Q 1} are special cases of both
Theorem \ref{v1} and Theorem \ref{v2}. The next result can be seen as a
simpler variant of Theorem \ref{v3}.

If $T_{2}$ and $T_{3}$ of $\left\{  T_{i}\right\}  _{i=2}^{4}$ are specified
then we can use the previous result to give conditions on $T_{4}$ which insure
that $\left\{  T_{i}\right\}  _{i=2}^{4}\rightrightarrows X$. Specifically if
we are given a $T_{2}$ and $T_{3}$ and the coherence conditions then the side
lengths of $T_{2}$ and $T_{3}$ are enough to fully specify $\rho(X).$ The
value of $\operatorname*{kos}$ at the distinguished vertex of $T_{2}$ will
give the value of $K_{23}$ for the matrix $\mathcal{M(}\left\{  T_{2}%
,T_{3}\right\}  ).$ Similarly for $T_{3}$ and $K_{24}.$ For us to assemble the
triangles into the tetrahedron the lengths of the sides of $T_{4}$ must match
appropriately lengths of sides of $T_{2}$ and $T_{3}.$ However the data from
$T_{2}$ and $T_{3}$ is not enough to compute $K_{42.}$ which would be the
value of $\operatorname*{kos}$ at its distinguished vertex of $T_{4}$. Hence
the congruence class of $T_{4}$ is indeterminate. To go forward we set
$K_{42.}=z$ and using $z$ we complete the matrix $\mathcal{M(}\left\{
T_{2},T_{3}\right\}  )$ to a matrix $\mathcal{M(}\left\{  T_{2},T_{3}\right\}
,z)$ which has no missing values. We can then apply the previous theorem to
that matrix.

\begin{corollary}
If $\left\{  T_{2},T_{3}\right\}  \rightrightarrows\,??$ then there is a third
triangle $T_{4}$ and a tetrahedron $X$ with $\left\{  T_{2},T_{3}%
,T_{4}\right\}  \rightrightarrows X$ if and only if the Euclidean ball in
$\mathbb{C}^{1}$%
\[
B=B\left(  \overline{K_{23}}K_{24},(1-\left\vert K_{23}\right\vert ^{2}%
)^{1/2}(1-\left\vert K_{24}\right\vert ^{2})^{1/2}\right)
\]
is nonempty. In that case the pairing of $z$ with the value $K_{42}$
establishes a one to one correspondence between $z\in B$ and the congruence
class of the possible third triangle $T_{4}.$ If $B$ is empty then there are
no such $T_{4}.$

\begin{proof}
Putting $z$ into (\ref{hopw}) gives $\left\vert z-\overline{K_{23}}%
K_{24}\right\vert ^{2}\leq(1-\left\vert K_{23}\right\vert ^{2})(1-\left\vert
K_{24}\right\vert ^{2}).$
\end{proof}
\end{corollary}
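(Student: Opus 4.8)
The plan is to recognize the corollary as a one-parameter Hermitian completion problem and to read the answer off Theorem~\ref{Q 1}. Given $T_2$ and $T_3$ together with the coherence relations, the matched side lengths determine the vector $\rho$ of Theorem~\ref{general} and also fix two of the three off-diagonal entries of the $3\times 3$ matrix $\mathcal{M}$ attached to the hoped-for tetrahedron, namely $K_{23}$ and $K_{24}$. The one entry not pinned down by $T_2,T_3$ is the value of $\operatorname*{kos}$ at the distinguished vertex of the missing face, and I would introduce it as a free complex parameter $z$, forming the Hermitian matrix $\mathcal{M}(\{T_2,T_3\},z)$ with ones on the diagonal. The entire question then reduces to: for which $z$ is $\mathcal{M}(\{T_2,T_3\},z)\succcurlyeq 0$?

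First I would note that since $T_2$ and $T_3$ are genuine triangles, part~(6) of Theorem~\ref{2} gives $|K_{23}|\le 1$ and $|K_{24}|\le 1$, so the standing hypothesis of Lemma~\ref{matrix condition} holds and positive semidefiniteness of the completion is equivalent to the single inequality~(\ref{hopw}). Substituting $z$ for the undetermined entry turns~(\ref{hopw}) into $|z-\overline{K_{23}}K_{24}|^{2}\le(1-|K_{23}|^{2})(1-|K_{24}|^{2})$, which says exactly that $z$ lies in the closed Euclidean ball $B$. Hence $\mathcal{M}(\{T_2,T_3\},z)\succcurlyeq 0$ if and only if $z\in B$.

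The two directions of the equivalence then come from Theorem~\ref{general}. If some $z\in B$ exists, part~(2) of that theorem produces an $X\subset\mathbb{CH}^{3}$ with $\mathcal{M}(X)=\mathcal{M}(\{T_2,T_3\},z)$ and with $\rho(X)$ equal to the vector forced by the matched lengths; the $S^{\prime\prime}$ criterion of Theorem~\ref{2} then identifies two faces of $X$ with $T_2$ and $T_3$ and exhibits a well-defined third face $T_4$, so $\{T_2,T_3,T_4\}\rightrightarrows X$. Conversely, any assembly $\{T_2,T_3,T_4\}\rightrightarrows X$ forces $\mathcal{M}(X)\succcurlyeq 0$ with free entry $z=\operatorname*{kos}$ at the distinguished vertex of $T_4$, hence $z\in B$. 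Therefore an admissible $X$ (and $T_4$) exists precisely when $B\neq\emptyset$.

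The step that needs the most care, and where I expect the real work to sit, is the claimed one-to-one correspondence between $z\in B$ and congruence classes of $T_4$. For injectivity I would use that $z$ is literally the value of $\operatorname*{kos}$ at the distinguished vertex of $T_4$, so by the $S^{\prime\prime}$ congruence criterion of Theorem~\ref{2}---the two relevant side lengths already being fixed---distinct values of $z$ yield noncongruent $T_4$; equivalently, by part~(3) of Theorem~\ref{general} the pair $(\mathcal{M}(X),\rho(X))$ determines $X$ up to congruence while $\rho(X)$ is frozen, so $z$ alone indexes the class. Surjectivity is the routine remark that every admissible $T_4$ supplies its own $z\in B$. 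A point worth flagging, though not a genuine obstacle, is the empty-ball clause: the radius $(1-|K_{23}|^{2})^{1/2}(1-|K_{24}|^{2})^{1/2}$ is real and $B$ contains its center whenever $|K_{23}|,|K_{24}|\le 1$, which always holds for genuine triangles, so in this setting $B$ is in fact never empty and the final sentence of the statement is only a formal safeguard.
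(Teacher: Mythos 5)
Your proposal is correct and takes essentially the same route as the paper: the paper's one-line proof simply substitutes $z$ into (\ref{hopw}), relying on the discussion immediately preceding the corollary, which you have faithfully reconstructed (completing $\mathcal{M}(\left\{ T_{2},T_{3}\right\} ,z)$, invoking Lemma \ref{matrix condition} through Theorem \ref{Q 1}, producing $X$ via Theorem \ref{general}(2), and using the $S^{\prime\prime}$ criterion of Theorem \ref{2} for the bijection with congruence classes of $T_{4}$). Your closing observation is also sound: since $\left\vert K_{23}\right\vert ,\left\vert K_{24}\right\vert \leq1$ always holds by Theorem \ref{2}(6), the closed ball $B$ contains its center $\overline{K_{23}}K_{24}$ and is never empty, so the final clause of the statement is indeed only a formal safeguard.
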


\subsection{Question 2\label{question 2}}

We saw in Section \ref{equivalence} that Question 2 is equivalent to Question
1, Having answered Question 1 we now reconfigure that answer to apply to
Question 2. We will be informal.

We start with three dimensional $\left\{  J_{i}\right\}  _{i=1}^{4}%
\subset\mathcal{CPP}.$ We assume the matching side length conditions and the
cocycle condition, now defined using the invariants $\delta$ and $\alpha$
defined directly from reproducing kernels of the $\left\{  J_{i}\right\}
_{i=1}^{4}$. We want to know if there is a four dimensional $H\in
\mathcal{CPP}$ whose four three dimensional regular subspaces are rescalings
of the $\left\{  J_{i}\right\}  .$ As in the previous section it suffices to
consider only the three spaces $\left\{  J_{i}\right\}  _{i=2}^{4}$. Given
those spaces we can construct the matrix $\mathcal{M}(\left\{  J_{i}\right\}
),$ the matrix that will equal $\operatorname*{KOS}(H,1)$ if there is an $H.$
The entries of $\mathcal{M}(\left\{  J_{i}\right\}  )$ are the values
$\left\{  L_{ij}\right\}  $ of $\operatorname*{kos}$ for the $\left\{
J_{i}\right\}  ,$ we obtain%
\begin{equation}
\mathcal{M}(\left\{  J_{i}\right\}  )=%
\begin{pmatrix}
1 & L_{23} & L_{24}\\
L_{32} & 1 & L_{34}\\
L_{42} & L_{43} & 1
\end{pmatrix}
. \label{ell}%
\end{equation}

\begin{theorem}
\label{Q 2}Given $\left\{  J_{i}\right\}  _{i=2}^{4}\subset\mathcal{CPP}$ and
$\left\{  J_{i}\right\}  _{i=1}^{4}\rightrightarrows\,??$ there is a four
dimensional $H\in\mathcal{CPP}$ with $\left\{  J_{i}\right\}  _{i=2}%
^{4}\rightrightarrows\,H$ if and only if $\mathcal{M}(\left\{  J_{i}\right\}
)\succcurlyeq0.$
\end{theorem}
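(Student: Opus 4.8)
The plan is to transport the answer to Question 1, Theorem \ref{Q 1}, to the Hilbert-space setting by means of the equivalence established in Section \ref{equivalence}. First I would use Theorem \ref{reduction}: each $J_i$ is a three-dimensional member of $\mathcal{CPP}$, so there is a triangle $T_i$ in some $\mathbb{CH}^k$ with $J_i \sim DA(T_i)$. I fix such rescalings for $i = 2,3,4$, with the labeling of the vertices of $T_i$ chosen to match the labeling that the coherence scheme $\{J_i\}_{i=1}^4 \rightrightarrows\,??$ assigns to the kernel functions of $J_i$. Because $\delta$, the angular invariant $\alpha$, and $\operatorname*{kos}$ are all invariant under rescaling, the matching side-length relations and the cocycle condition (\ref{kkocycle}) pass from the $\{J_i\}$ to the $\{T_i\}$, so that $\{T_i\}_{i=1}^4 \rightrightarrows\,??$ holds with the corresponding scheme.

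The second point is that the matrix in question is unchanged by the rescaling. The entries $L_{ij}$ of $\mathcal{M}(\{J_i\})$ in (\ref{ell}) are values of $\operatorname*{kos}$ read off from reproducing kernels of the $J_i$, and rescaling invariance makes them equal to the corresponding values of $\operatorname*{kos}$ for the triangles $T_i$. Hence $\mathcal{M}(\{J_i\})$ equals, entry for entry, the triangle matrix $\mathcal{M}(\{T_i\})$ of Theorem \ref{Q 1}; in particular one is positive semidefinite exactly when the other is.

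With these two identifications in hand the proof is a chain of equivalences. By Proposition \ref{equivalent} there is an $H \in \mathcal{CPP}$ with $\{J_i\} \rightrightarrows H$ if and only if there is a tetrahedron $X$ with $\{T_i\} \rightrightarrows X$, and in that case $H \sim DA(X)$. Using the reduction from four triangles to three carried out just before Theorem \ref{Q 1}, the existence of such an $X$ is equivalent to the assembly question for $\{T_i\}_{i=2}^4$, which by Theorem \ref{Q 1} is solvable precisely when $\mathcal{M}(\{T_i\}) \succcurlyeq 0$. Combining this with the equality $\mathcal{M}(\{J_i\}) = \mathcal{M}(\{T_i\})$ yields the theorem.

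I expect the only real difficulty to be bookkeeping rather than mathematics: I must verify that the vertex labelings chosen when passing from $J_i$ to $DA(T_i)$ make the coherence scheme for the spaces coincide with the coherence scheme for the triangles, and that no ambiguity in the rescaling factors $\gamma$ disturbs this. Since the invariants used to state both the coherence conditions and the matrix $\mathcal{M}$ are rescaling invariant, and since the bijection of Corollary \ref{allthesame} respects passage to subspaces, this consistency is automatic; once it is checked, the positivity analysis needs nothing beyond what Theorem \ref{Q 1} and Lemma \ref{matrix condition} already supply.
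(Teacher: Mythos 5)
Your proposal is correct and is essentially the paper's own argument: the paper proves Theorem \ref{Q 2} informally by exactly this transport of Theorem \ref{Q 1} through the equivalence of Proposition \ref{equivalent}, using the rescaling invariance of $\operatorname*{kos}$ to identify the entries $L_{ij}$ of $\mathcal{M}(\{J_{i}\})$ with the corresponding entries of $\mathcal{M}(\{T_{i}\})$, together with the reduction from four triangles (or spaces) to three via Lemma \ref{onlythree}. Your closing bookkeeping check on vertex labelings and rescaling factors is likewise what the paper's conventions on ordered rescalings and coherence schemes are designed to make automatic, so nothing beyond your chain of equivalences is required.
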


Because $\left\{  J_{i}\right\}  _{i=2}^{4}\subset\mathcal{CPP}$ we know from
Theorem \ref{2} that each $\left\vert L_{ij}\right\vert \leq1.$ Hence Lemma
\ref{matrix condition} can be applied and that gives several conditions
equivalent to $\mathcal{M}(\left\{  J_{i}\right\}  )\succcurlyeq0$ including
$\det\mathcal{M}(\left\{  J_{i}\right\}  )\geq0.$ However if we only know that
$\left\{  J_{i}\right\}  _{i=2}^{4}\subset\mathcal{RK}$ then deriving
$\mathcal{M}(\left\{  J_{i}\right\}  )\succcurlyeq0$ from $\det\mathcal{M}%
(\left\{  J_{i}\right\}  )\geq0$ requires the additional assumption that the
$\left\vert L_{ij}\right\vert \leq1.$ (However this is not actually a
different formulation.. By Theorem \ref{2}, adding the assumptions that
$\left\vert L_{ij}\right\vert \leq1$ is equivalent to passing from the
assumption that $\left\{  J_{i}\right\}  _{i=2}^{4}\subset\mathcal{RK}$ to the
assumption that $\left\{  J_{i}\right\}  _{i=2}^{4}\subset\mathcal{CPP}.)$

\subsection{Quiggin's Example\label{quiggen}}

If a three dimensional RKHSI $H$ has the Pick property then it has the
complete Pick property. This can be seen from the implication
$(4)\Longrightarrow\left(  2\right)  $ in Theorem \ref{triangle} and is not
hard to prove directly. It is then natural to ask if a four dimensional $H$
with the Pick proper must have the $CPP.$ It was shown by Quiggin in his
thesis that this is not true, \cite{Q}, \cite[Page 94]{AM}. He produced a
family of four dimensional RKHSI $H_{x},$ $0<x<1,$ with the Pick property (and
hence whose three dimensional regular subspaces each have the Pick property,
and hence also the CPP) and showed that the space $H_{1/4}$ failed the CPP.

Here we consider the spaces $\left\{  H_{x}\right\}  $ using the results from
the previous sections. We do not verify that the $\left\{  H_{x}\right\}  $
have the Pick property (that is done in \cite[pg, 84]{Q}), but we will show
that for each $H_{x}$ the regular three dimensional subspaces have the CPP. We
also show that none of the $H_{x}$ have the CPP.

Following Quiggin we introduce a family $\left\{  H_{x}:0<x<1\right\}
\subset\mathcal{RK}$ of four dimensional spaces by specifying their Gram
matrices, $\operatorname*{Gr}(H_{x}).$ For $0<x<1$ and $s=(1-x)\sqrt{x}$ set%
\[
\operatorname*{Gr}(H_{x})=%
\begin{pmatrix}
1 & x & x & x+is\\
x & 1 & x-is & x\\
x & x+is & 1 & x\\
x-is & x & x & 1
\end{pmatrix}
.
\]
To show this is the Gram matrix of a RKHSI we need to show that
$\operatorname*{Gr}(H_{x})\succ0.$ By Lemma \ref{sylvester} we can do that by
checking the signs of the leading principal minors. They are
\[
\left(  1+x\right)  ^{2}\left(  1-x\right)  ^{4},\left(  1+x\right)  \left(
1-x\right)  ^{2},\left(  1+x\right)  \left(  1-x\right)  ,1
\]
and, by inspection, are all positive for $0<x<1$. (Those computations and the
determinant computations below were done using computer algebra.)

Earlier we used the matrices $\operatorname*{KOS}(\operatorname*{Gr}%
(H_{x}),1)$ from (\ref{ell}). Here for ease in computing we use the matrices
$\operatorname*{MQ}(H_{x},1)=$ $\left(  \delta_{1i}\delta_{1j}%
\operatorname*{kos}\nolimits_{1}(i,j)\right)  _{i,j=2}^{4}$ mentioned in
(\ref{MQx}). The two have determinants of the same sign as do their square submatrices.

From the definitions we have
\[
\operatorname*{MQ}(H_{x},1)=%
\begin{pmatrix}
1-x^{2} & 1-\frac{x^{2}}{x-is} & 1-x-is\\
1-\frac{x^{2}}{x+is} & 1-x^{2} & 1-x-is\\
1-x+is & 1-x+is & \left(  1-x\right)  (1+x^{2})
\end{pmatrix}
\]
{}

Fix $x.$ We want to know that $\left\{  J_{xi}\right\}  _{1}^{4},$ the regular
three dimensional subspaces of $H_{x},$ have the CPP. By (5) of Theorem
\ref{triangle} we know that $J_{x2}\in\mathcal{CPP}$ if the matrix
$\mathcal{J}_{2}$ obtained by deleting the first row and first column of
$\operatorname*{MQ}(H_{x},1)$ satisfies $\mathcal{J}_{2}\succcurlyeq0$. That
will follow if we show $\det\mathcal{J}_{2}\geq0.$ Similarly for $J_{x3}$ and
$J_{x4}$. For $J_{x1}$ we follow the same path but starting with
$\operatorname*{MQ}(H_{x},2)$ rather than $\operatorname*{MQ}(H_{x},1).$ To
show that $H_{x}\notin\mathcal{CPP}$ we will show that $\det\operatorname*{MQ}%
(H_{x},1)<0$ and hence $\operatorname*{MQ}(H_{x},1)\succcurlyeq0$ fails. All
these things can be seen in the explicit formulas for the determinants. Note
that for $0<x<1,$ we have $x^{2}-x+1>0.$ We have
\begin{align*}
\det\mathcal{J}_{1}  &  =\det\mathcal{J}_{2}=\det\mathcal{J}_{3}=x^{2}\left(
x+1\right)  \left(  x-1\right)  ^{2},\\
\det\mathcal{J}_{4}  &  =\frac{x^{3}\left(  x+1\right)  \left(  x-1\right)
^{2}}{x^{2}-x+1},\\
\det\operatorname*{MQ}(H_{x},1)  &  =-\frac{x^{3}\left(  x+1\right)
^{2}\left(  x-1\right)  ^{4}}{x^{2}-x+1}.
\end{align*}

This shows that the matching distances property together with the cocycle
property are not sufficient to insure that a set of four three dimensional
spaces with the CPP can be assembled into a four dimensional space with the CPP.

\begin{proposition}
Fix $x,$ $0<x<1,$ let $\left\{  J_{xi}\right\}  _{i=1}^{4}$ be the three
dimensional regular subspaces of $H_{x}$. Then

\begin{enumerate}
\item The $\left\{  J_{i}\right\}  _{i=1}^{4}\subset\mathcal{CPP}$ and
$\left\{  J_{i}\right\}  _{i=1}^{4}\rightrightarrows\,??$.

\item There is an $H\in\mathcal{RK}$ with $\left\{  J_{i}\right\}  _{i=1}%
^{4}\rightrightarrows H$.

\item There is no $H\in\mathcal{CPP}$ with $\left\{  J_{i}\right\}  _{i=1}%
^{4}\rightrightarrows H$.
\end{enumerate}
\end{proposition}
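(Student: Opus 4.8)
The plan is to dispatch the three assertions in order, relying throughout on Corollary \ref{our mq}, which reduces the CPP to a single positive-semidefiniteness condition, and on the explicit determinant formulas already recorded in this section.

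First I would settle statement (2) and, with it, the coherence half of statement (1). Since the leading principal minors of $\operatorname*{Gr}(H_{x})$ were computed to be $(1+x)^{2}(1-x)^{4}$, $(1+x)(1-x)^{2}$, $(1+x)(1-x)$, and $1$, all strictly positive for $0<x<1$, Lemma \ref{sylvester} gives $\operatorname*{Gr}(H_{x})\succ0$, so $H_{x}\in\mathcal{RK}$. By construction the four regular three-dimensional subspaces of $H_{x}$ are exactly the $\{J_{xi}\}_{i=1}^{4}$, so $\{J_{i}\}\rightrightarrows H_{x}$ via the identity rescalings; this proves (2) and simultaneously witnesses the coherence relations $\{J_{i}\}\rightrightarrows\,??$, since any two overlapping subspaces are literally common regular subspaces of the single ambient space $H_{x}$.

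Next I would prove that each $J_{xi}\in\mathcal{CPP}$, completing (1). Each $J_{xi}$ is three-dimensional, so by condition (5) of Theorem \ref{triangle} it lies in $\mathcal{CPP}$ exactly when the associated $2\times2$ matrix is positive semidefinite; because the $\operatorname*{MQ}$ and $\operatorname*{KOS}$ matrices share the signs of all their square-submatrix determinants, I would work with the matrices $\mathcal{J}_{i}$ extracted from $\operatorname*{MQ}(H_{x},1)$ (and from $\operatorname*{MQ}(H_{x},2)$ for $J_{x1}$, which does not contain $k_{1}$). For a $2\times2$ Hermitian matrix with positive diagonal, Sylvester's criterion reduces positivity to nonnegativity of the determinant. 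The diagonal entries $1-x^{2}$ and $(1-x)(1+x^{2})$ are positive on $(0,1)$, and the recorded formulas $\det\mathcal{J}_{1}=\det\mathcal{J}_{2}=\det\mathcal{J}_{3}=x^{2}(x+1)(x-1)^{2}$ and $\det\mathcal{J}_{4}=x^{3}(x+1)(x-1)^{2}/(x^{2}-x+1)$ are nonnegative there, so each $\mathcal{J}_{i}\succcurlyeq0$ and hence each $J_{xi}\in\mathcal{CPP}$.

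Finally, for (3) I would argue by contradiction. Suppose some $H\in\mathcal{CPP}$ satisfies $\{J_{i}\}\rightrightarrows H$. Each off-diagonal entry $\operatorname*{kos}_{1}(i,j)$ of $\operatorname*{KOS}(H,1)$ is supplied by the unique $J_{xk}$ whose index set contains $\{1,i,j\}$, so the coherence data fills $\operatorname*{KOS}(H,1)$ completely, and by the discussion of assembly these imputed values must equal the corresponding entries of $\operatorname*{KOS}(H_{x},1)$; thus $\operatorname*{KOS}(H,1)=\operatorname*{KOS}(H_{x},1)$. By Corollary \ref{our mq}, membership $H\in\mathcal{CPP}$ forces $\operatorname*{KOS}(H,1)\succcurlyeq0$, whence $\det\operatorname*{KOS}(H,1)\geq0$. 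But $\det\operatorname*{KOS}(H_{x},1)$ has the same sign as $\det\operatorname*{MQ}(H_{x},1)=-x^{3}(x+1)^{2}(x-1)^{4}/(x^{2}-x+1)$, which is strictly negative for $0<x<1$, so $\operatorname*{KOS}(H_{x},1)$ is not positive semidefinite. This contradiction shows no such $H$ exists. The hard part here is not any calculation, all of which is already in hand, but the conceptual step that any CPP space assembled from the $\{J_{i}\}$ is forced to reproduce the single matrix $\operatorname*{KOS}(H_{x},1)$; once that rigidity is in place, the negative determinant does the rest.
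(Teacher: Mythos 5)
Your proof is correct and takes essentially the same route as the paper: the same Gram-matrix leading principal minors for (2) and the tautological choice $H=H_{x}$, the same $2\times2$ determinants $\det\mathcal{J}_{i}\geq0$ via condition (5) of Theorem \ref{triangle} for (1), and the same computation $\det\operatorname{MQ}(H_{x},1)<0$ combined with Corollary \ref{our mq} for (3). The only difference is presentational: you make explicit the rigidity step for (3) --- that the coherence data forces any candidate $H$ to satisfy $\operatorname{KOS}(H,1)=\operatorname{KOS}(H_{x},1)$, since every entry $\operatorname{kos}_{1}(i,j)$ is supplied by the $J_{xk}$ with $k\notin\{1,i,j\}$ and $\operatorname{kos}$ is rescaling invariant --- a point the paper leaves implicit in its assembly machinery (Theorem \ref{Q 2}), and which is genuinely needed because the section's displayed computation by itself only shows $H_{x}\notin\mathcal{CPP}$ rather than the stronger claim that no $H\in\mathcal{CPP}$ exists.
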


\begin{proof}
We verified above that the $\left\{  J_{i}\right\}  $ all have the CPP. The
coherence is automatic because the $\left\{  J_{i}\right\}  $ are the three
dimensional regular subspaces of a four dimensional RKHSI. The second
statement is tautological, $H=H_{x}$ will suffice. It is included to emphasize
that there is no obstruction to assembling the $\left\{  J_{i}\right\}  $ into
a containing RKHSI $H,$ just not one with the CPP.
\end{proof}

Because $\left\{  J_{xi}\right\}  \subset\mathcal{CPP}$ there are triangles
$\left\{  T_{xi}\right\}  $ in $\mathbb{CH}^{n}$ with $J_{xi}\sim DA(T_{xi}).$
The coherence of the $\left\{  J_{xi}\right\}  $ insures that the $\left\{
T_{xi}\right\}  $ satisfy the coherence conditions for assembly as a
tetrahedron. Hence the previous result can be recast as a result about the
possible assembly of those triangles.

\subsection{Tetrahedra in $\mathbb{RH}^{k}$}

The study of polyhedra in $\mathbb{RH}^{k}$ is a very rich topic which is
considered in many places; for instance the books \cite{Co}, \cite{F},
\cite{An}, surveys \cite{J}, \cite{MP}, and research papers \cite{HR},
\cite{Di} \cite{W}. The relation between HSRK and sets in $\mathbb{RH}^{k}$ is
studied in \cite{BIM}, \cite{M} and \cite[Sec. 7]{Ro}. Here we look at what
happens when the results related to Theorem \ref{general} are specialized to
tetrahedra in $\mathbb{RH}^{k}.$

We will call sets in $\mathbb{CH}^{n}$ that are totally geodesic submanifolds
isometric to $\mathbb{RH}^{k}$ \textit{copies of} $\mathbb{RH}^{k}.$ A four
point set in a copy of $\mathbb{RH}^{k}$ is a real hyperbolic tetrahedron. For
those sets the value of $\operatorname*{kos}$ at a bivalent vertex specializes
as the the cosine of the vertex angle. This leads to simplifications of some
of the previous results and to new geometric questions.

There are two technical points to discuss before going forward. First, there
is an ambiguity in saying that two such sets in a copy $\mathbb{RH}^{k}$
inside of $\mathbb{CH}^{n}$ are congruent. We could mean that there is an
automorphism of $\mathbb{CH}^{n}$ which takes one to the other. Alternatively
we might mean that there is an automorphism of $\mathbb{RH}^{k} $ which takes
one set to the other, with no mention of an automorphism of $\mathbb{CH}^{n}.$
Fortunately these two notions are equivalent \cite{Go}. A similar comment
holds for sets inside two different copies of $\mathbb{RH}^{k}$ inside
different copies $\mathbb{CH}^{n}.$

Second, we have been studying $X$ using invariants derived from $DA(X$) but
have only defined $DA(X)\ $for $X$ inside a copy of $\mathbb{RH}^{k}$ inside
some $\mathbb{CH}^{n}.$ To have definitions that apply to an $X$ in any
version of $\mathbb{RH}^{k}$ we select $\Lambda,$ an isometric map of that
$\mathbb{RH}^{k}$ onto a copy of $\mathbb{RH}^{k}$ inside a $\mathbb{CH}^{n},$
and then define $DA(X)$ to be $DA(\Lambda(X)).$ The construction depends on
the choice of $\Lambda$ but changing $\Lambda$ produces a rescaling of $DA(X)$
and that does not change the values of the invariants we work with.

\subsection{Sets Inside Copies of $\mathbb{RH}^{k}$\label{rhk}}

If $S$ is a copy of $\mathbb{RH}^{k}$ with $k=1$ then $S$ is an ordinary
geodesic, if $k=2$ then, recalling $BK_{2}$ from Section \ref{hg},
$S=\phi(BK_{2})$ for some $\phi\in$ $\operatorname*{Aut}\mathbb{B}_{n}.$ The
cases $k>2$ follow a similar pattern, those sets are automorphic images of the
set $BK_{k}$ obtained by intersecting $\mathbb{B}_{n}$ with a copy of
$\mathbb{R}^{k}$ which is inside $\mathbb{C}^{n}.$ These ideas are discussed
in more detail in \cite{Go} and \cite{BI}. The fact that $X\subset
\mathbb{CH}^{n}$ is inside such an $S$ is invariant under automorphisms of the
ambient $\mathbb{CH}^{n}.$ There are various intrinsic characterizations of
sets $X$ with this property; the following proposition is consequence of
\cite[Lemma 2.1]{BI}.

\begin{proposition}
\label{real}$\left\{  x_{i}\right\}  _{i=1}^{s}\subset\mathbb{CH}^{n}$ is
inside a copy of $\mathbb{RH}^{k}$ if and only if all the numbers
$\operatorname*{kos}_{i}(p,q)$ are real.
\end{proposition}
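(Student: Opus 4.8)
The plan is to prove both implications directly from the coordinate formula (\ref{simple kos}) for $\operatorname*{kos}$ at a vertex placed at the origin, packaging the values $\operatorname*{kos}_1(p,q)$ as a Gram matrix via Theorem \ref{general}. Throughout I would use that $\operatorname*{kos}$ is invariant under automorphisms and that a copy of $\mathbb{RH}^k$ passing through the origin is, after a unitary, the standard slice $BK_k=\mathbb{R}^k\cap\mathbb{B}_n$ (the higher-dimensional analog of the $BK_2$ of Section \ref{hg}; see \cite{Go}, \cite{BI}).

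For the forward direction, suppose $X$ lies in a copy $C$ of $\mathbb{RH}^k$. To evaluate a single $\operatorname*{kos}_i(p,q)$ I would first apply an automorphism $\psi$ with $\psi(x_i)=0$; then $\psi(C)$ is a copy of $\mathbb{RH}^k$ through the origin, so after a further origin-fixing unitary $U$ I may assume $U\psi(C)=BK_k$, whence every $U\psi(x_j)$ has real coordinates. Since $\operatorname*{kos}$ is invariant under $U\psi$ and the origin-vertex of the transported configuration is $U\psi(x_i)=0$, formula (\ref{simple kos}) gives $\operatorname*{kos}_i(p,q)=\langle\langle \widehat{U\psi x_p},\widehat{U\psi x_q}\rangle\rangle$, the Hermitian inner product of two real unit vectors, which is real. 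As $i,p,q$ were arbitrary, every $\operatorname*{kos}_i(p,q)$ is real.

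For the converse I would use only the values based at $x_1$. Move $x_1$ to the origin by an automorphism; this preserves both the reality hypothesis and the conclusion sought. By (\ref{simple kos}) the matrix $\mathcal{M}(X)$ of Theorem \ref{general} is then the Gram matrix $(\langle\langle\widehat{x_p},\widehat{x_q}\rangle\rangle)$ of the radial projections, and by part (1) of that theorem it is positive semidefinite. The hypothesis makes its entries real, so $\mathcal{M}(X)$ is a real symmetric positive semidefinite matrix of some rank $k$; factoring $\mathcal{M}(X)=LL^{T}$ realizes it as the Gram matrix of real vectors $w_2,\dots,w_{s}\in\mathbb{R}^k\subset\mathbb{R}^n\subset\mathbb{C}^n$. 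Now $\{\widehat{x_p}\}$ and $\{w_p\}$ have identical Gram matrices, so they are carried to one another by a unitary $U$ of $\mathbb{C}^n$. Because $U$ fixes the origin and $\delta(0,\cdot)=|\cdot|$, we get $Ux_p=|x_p|\,U\widehat{x_p}=|x_p|\,w_p\in\mathbb{R}^k$, whence $U(X)\subset\mathbb{R}^k\cap\mathbb{B}_n=BK_k$, a copy of $\mathbb{RH}^k$; applying $U^{-1}$ shows $X$ lies in a copy of $\mathbb{RH}^k$.

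I expect the delicate points to lie entirely in the converse. The first is the linear-algebra passage from a real symmetric positive semidefinite Gram matrix to an honest real realization, together with the ``equal Gram matrices imply unitary equivalence'' step; both are standard but require the ambient dimension to be large enough, which is harmless since we may enlarge $n$ as in Section \ref{HSRK}. The second, and the genuinely geometric, point is the identification that a configuration of real vectors through the origin lands in the \emph{totally real} slice $BK_k$, so that it is a copy of $\mathbb{RH}^k$ rather than a submanifold carrying complex directions; this is automatic because real vectors span a totally real subspace, but it is exactly where the distinction between $\mathbb{RH}$ and $\mathbb{CH}$ enters. I would also remark that the whole statement can alternatively be deduced from \cite[Lemma 2.1]{BI}, but the route above keeps everything inside the $\operatorname*{kos}$/Gram-matrix framework of Theorem \ref{general}.
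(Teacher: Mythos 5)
Your proposal is correct, but it is a genuinely different route from the paper's: the paper gives no argument at all for this proposition, deriving it in one line as a consequence of \cite[Lemma 2.1]{BI}, whereas you give a self-contained proof entirely inside the $\operatorname{kos}$/Gram-matrix framework of Theorem \ref{general}. Both halves of your argument check out. In the forward direction the one fact you lean on -- that a copy of $\mathbb{RH}^k$ through the origin is a unitary image of $BK_k$ -- is justified because the involutions $\phi_a$ with $a$ real preserve the slice $\mathbb{R}^k\cap\mathbb{B}_n$, so the stabilizer of $BK_k$ acts transitively on it, and an origin-fixing automorphism of the ball is unitary (Cartan); this is exactly the standard material in \cite{Go}, \cite{BI} you point to. In the converse, your worry about enlarging $n$ is in fact unnecessary: the rank $k$ of $\mathcal{M}(X)$ equals the complex dimension of the span of the radial projections $\{\widehat{x_p}\}$, so $k\leq n$ automatically and the unitary matching the two Gram-equal tuples exists inside $\mathbb{C}^n$. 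What your route buys beyond the citation is twofold: first, it produces the optimal $k$ (the rank of $\mathcal{M}(X)$) rather than an unspecified one; second, and more interestingly, your converse proves a strictly stronger statement than the proposition -- reality of the $\operatorname{kos}$ values at a \emph{single} base point $x_1$ already forces $X$ into a copy of $\mathbb{RH}^k$ -- which generalizes to arbitrary finite sets what the paper establishes only for triangles in Corollary \ref{real kos} (there via the model triangle $\Gamma$ of (\ref{tri})). The trade-off is that the citation to \cite{BI} imports a characterization valid in a broader setting, while your argument, though longer, keeps the paper logically self-contained and makes visible exactly where totally real geometry enters, namely in the identification $\mathbb{R}^k\cap\mathbb{B}_n=BK_k$.
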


Recall that there is a notion of the hyperbolic angle of intersection for
curves in $\mathbb{RH}^{2}.$

\begin{corollary}
\label{real kos} The triangle $T=\left\{  x_{1},x_{2},x_{3}\right\}
\subset\mathbb{CH}^{n}$ is in a copy of $\mathbb{RH}^{2}$ if and only if
$\operatorname*{kos}_{1}(2,3)$ is real. In that case $\operatorname*{kos}%
_{1}(2,3)=\cos va_{23}$ where $va_{23}$ is hyperbolic angle at the vertex
formed by the hyperbolic geodesics $x_{1}x_{2}$ and $x_{1}x_{3}.$
\end{corollary}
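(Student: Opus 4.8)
The plan is to derive both implications from Proposition \ref{real} together with the model computation (\ref{computed kos}), and then to identify the resulting value of $\operatorname*{kos}$ with a cosine using (\ref{simple kos}) and the conformality of $BK_2$ with Euclidean geometry at the origin.

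For the forward implication I would argue directly from Proposition \ref{real} applied to the three-point set $\{x_1,x_2,x_3\}$: if $T$ lies in a copy of $\mathbb{RH}^2$, then every $\operatorname*{kos}_i(p,q)$ is real, and in particular $\operatorname*{kos}_1(2,3)$ is real. For the converse I would first use an automorphism to place $T$ in the model position $\Gamma=\{(0,0),(a,0),(x,b)\}$ of (\ref{tri}), with $x_1$ at the origin; this is legitimate because both the property of lying in a copy of $\mathbb{RH}^2$ and the value of $\operatorname*{kos}_1(2,3)$ are automorphism invariant. By (\ref{computed kos}) we have $\operatorname*{kos}_1(2,3)=\bar{x}/\|x_3\|$ with $\|x_3\|=\sqrt{|x|^2+b^2}>0$, so $\operatorname*{kos}_1(2,3)$ is real precisely when $x$ is real. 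When $x$ is real all coordinates of $\Gamma$ are real, whence $\Gamma\subset\mathbb{R}^2\cap\mathbb{B}_n=BK_2$, which is exactly a copy of $\mathbb{RH}^2$; thus $T$ lies in a copy of $\mathbb{RH}^2$, as required.

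For the final sentence I would keep $x_1$ at the origin and invoke (\ref{simple kos}), which gives $\operatorname*{kos}_1(2,3)=\left\langle\left\langle\widehat{x_2},\widehat{x_3}\right\rangle\right\rangle$. Since $x_2,x_3$ are now real vectors, the right-hand side is the ordinary Euclidean inner product of the unit vectors $\widehat{x_2},\widehat{x_3}$ in $\mathbb{R}^2$, hence equals the cosine of the Euclidean angle between the segments $0x_2$ and $0x_3$. As recorded in Section \ref{hg}, the geometry of $BK_2$ is conformal with the Euclidean geometry at the origin, so this Euclidean angle equals the hyperbolic vertex angle $va_{23}$ at $x_1$; therefore $\operatorname*{kos}_1(2,3)=\cos va_{23}$.

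I expect the only genuine subtlety to be the bookkeeping in the converse: Proposition \ref{real} is phrased in terms of \emph{all} of the numbers $\operatorname*{kos}_i(p,q)$ being real, whereas the corollary hypothesizes only that the single value $\operatorname*{kos}_1(2,3)$ is real. The point to get right is that for a three-point set this one reality condition already forces the whole configuration into a totally real plane, which is why I would bypass Proposition \ref{real} in the converse direction and instead exhibit the copy of $\mathbb{RH}^2$ explicitly via the model coordinates, where reality of $x$ is visibly equivalent to $\Gamma\subset BK_2$.
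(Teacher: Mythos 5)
Your proposal is correct and takes essentially the same route as the paper: place $T$ in the model position $\Gamma$ of (\ref{tri}), use (\ref{computed kos}) to see that reality of $\operatorname*{kos}_1(2,3)$ is equivalent to reality of $x$ (hence of all coordinates), and then identify $\operatorname*{kos}_1(2,3)=\cos va_{23}$ via (\ref{simple kos}) and the conformality of $BK_2$ with the Euclidean metric at the origin. The only (harmless) variation is in the converse, where you exhibit the containing copy $BK_2=\mathbb{R}^2\cap\mathbb{B}_n$ directly, whereas the paper observes that real coordinates force \emph{all} values of $\operatorname*{kos}$ to be real and then cites Proposition \ref{real}, adding that $k=2$ suffices for a three-point set --- your shortcut cleanly disposes of the very bookkeeping subtlety you flagged.
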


\begin{proof}
Using the model triangle $\Gamma$ in (\ref{tri}) it is easy to check that if
$\operatorname*{kos}_{1}(2,3)$ is real then the coordinates of the points of
$\Gamma$ are real and hence also so are the other values of
$\operatorname*{kos}.$ It then follows from the previous proposition that $T$
is in a copy of $\mathbb{RH}^{k}.$ Because $\Gamma$ only has three points we
can take $k=2.$ Using (\ref{simple kos}) we see that $\operatorname*{kos}%
_{1}(2,3)=\left\langle \left\langle \widehat{x_{2}},\widehat{x_{3}%
}\right\rangle \right\rangle .$ That inner product equals the cosine of the
Euclidean angle at the origin of $\mathbb{R}^{2}$ between the segments
$0\widehat{x_{2}}$ and $0\widehat{x_{3}}$. At the origin the Euclidean metric
on $\mathbb{R}^{2}$ is conformal with the hyperbolic metric and hence the
Euclidean angle whose cosine we found is also the hyperbolic angle.
\end{proof}

\subsubsection{Vertex Angles\label{rht}}

Angles such as $va_{23}$ in the previous corollary are called \textit{vertex
angles}. (On a polyhedron they are also sometimes called face angles or facial
angles.) The previous corollary shows that if $X=\left\{  x_{1},x_{2}%
,x_{3},x_{4}\right\}  $ is in a copy of $\mathbb{RH}^{n}$ then
$\operatorname*{kos}_{i}(j,k)$ is the cosine of the vertex angle at the vertex
$x_{j}x_{1}x_{i}.$ Denoting that angle by $va_{ij}$ the entries of
$\mathcal{M}(X)=$ $\operatorname*{KOS}(DA(X),1)$ are $K_{ij}%
=\operatorname*{kos}\nolimits_{i}(j,k)=\cos va_{jk}$. Theorem \ref{Q 1}
applies to the matrix and the formulas simplify slightly because the $K_{ij}$
are real. To emphasize this change we introduce a modified notation for the
matrix $\operatorname*{KOS}(DA(X),1).$ We set%
\begin{equation}
\mathcal{M}_{CV\!\!A}(X)=\operatorname*{KOS}(DA(X),1)=%
\begin{pmatrix}
1 & \cos va_{23} & \cos va_{24}\\
\cos va_{32} & 1 & \cos va_{34}\\
\cos va_{42} & \cos va_{43} & 1
\end{pmatrix}
. \label{CVA}%
\end{equation}
and note that $\cos va_{ij}=\cos va_{ji}.$ Thus $\mathcal{M}_{CV\!\!A}%
(X)=\mathcal{M}(X),$ the added subscript is a reminder that the matrix entries
are cosines of vertex angles.

If we have triangles $\left\{  T_{i}\right\}  _{i=2}^{4}$ in a copy of
$\mathbb{RH}^{n}$ which satisfy the coherence conditions for assembly into a
tetrahedron, $\left\{  T_{i}\right\}  _{i=2}^{4}\rightrightarrows\,??$ we set
$\mathcal{M}_{CV\!\!A}(\left\{  T_{i}\right\}  _{i=2}^{4})=\mathcal{M}%
(\left\{  T_{i}\right\}  _{i=2}^{4}).$ Thus if $\left\{  T_{i}\right\}
_{i=2}^{4}\rightrightarrows\,X$ then $\mathcal{M}_{CV\!\!A}(\left\{
T_{i}\right\}  _{i=2}^{4})=\mathcal{M}_{CV\!\!A}(X).$ Those entries are real
and hence, by the previous proposition, if there is an $X$ then it is in a
\underline{real} hyperbolic space.

Theorem \ref{Q 1} applies in this context and gives

\begin{corollary}
\label{first real} Given triangles $\left\{  T_{i}\right\}  _{i=1}^{4}$ in
$\mathbb{RH}^{n}$ with $\left\{  T_{i}\right\}  _{i=2}^{4}\rightrightarrows
\,?? $ there is a tetrahedron $X$ in $\mathbb{RH}^{n}$ such that $\left\{
T_{i}\right\}  _{i=2}^{4}\rightrightarrows X$ if and only if $\mathcal{M}%
_{CV\!\!A}(\left\{  T_{i}\right\}  _{i=2}^{4})\succcurlyeq0.$ That condition
is equivalent to each of the following three conditions:
\begin{align}
\det\mathcal{M}_{CV\!\!A}(\left\{  T_{i}\right\}  _{i=2}^{4})  &
\geq0\label{det}\\
(\cos va_{34}-\cos va_{23}\cos va_{42})^{2}  &  \leq(1-\cos^{2}va_{23}%
)(1-\cos^{2}va_{42}),\label{trig va}\\
\left(  \frac{\cos va_{34}-\cos va_{23}\cos va_{42}}{\sin va_{23}\sin va_{42}%
}\right)  ^{2}  &  \leq1, \label{cauchy 3}%
\end{align}

\end{corollary}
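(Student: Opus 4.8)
The plan is to obtain this corollary essentially for free by specializing Theorem \ref{Q 1} to the case where the constituent triangles lie in real hyperbolic space, and then rewriting the resulting matrix inequality in trigonometric form. The organizing observation is that, by Corollary \ref{real kos}, each vertex value $\operatorname*{kos}\nolimits_{i}(j,k)$ for a triangle in a copy of $\mathbb{RH}^{n}$ is real and equals the cosine of the corresponding vertex angle. Hence the entries $K_{23},K_{24},K_{34}$ of $\mathcal{M}_{CV\!\!A}(\{T_{i}\})$ from (\ref{CVA}) are precisely $\cos va_{23},\cos va_{24},\cos va_{34}$, all real and (by statement (6) of Theorem \ref{2}) of modulus at most $1$.

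First I would dispatch the main equivalence. By statements (1) and (2) of Theorem \ref{Q 1}, there is an $X$ with $\{T_{i}\}_{i=2}^{4}\rightrightarrows X$ if and only if $\mathcal{M}(\{T_{i}\})=\mathcal{M}_{CV\!\!A}(\{T_{i}\})\succcurlyeq0$, and in that case $\mathcal{M}_{CV\!\!A}(\{T_{i}\})=\mathcal{M}_{CV\!\!A}(X)$. The only point needing care is that Theorem \ref{Q 1} a priori produces a tetrahedron in $\mathbb{CH}^{3}$, whereas the corollary asserts one in $\mathbb{RH}^{n}$; but since all entries of $\mathcal{M}_{CV\!\!A}(X)$ are real, Proposition \ref{real} guarantees that $X$ lies in a copy of $\mathbb{RH}^{k}$, so $X$ is a genuine real hyperbolic tetrahedron, as required.

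Next I would verify the three equivalent reformulations. Because each $\lvert\cos va_{ij}\rvert\leq1$, Lemma \ref{matrix condition} applies to $\mathcal{M}_{CV\!\!A}(\{T_{i}\})$; its equivalence (1)$\iff$(2) gives $\mathcal{M}_{CV\!\!A}(\{T_{i}\})\succcurlyeq0\iff\det\mathcal{M}_{CV\!\!A}(\{T_{i}\})\geq0$, which is (\ref{det}). For (\ref{trig va}) I would simply invoke statement (3) of Theorem \ref{Q 1}: the condition (\ref{hopw}) reads $\lvert K_{34}-\overline{K_{23}}K_{24}\rvert^{2}\leq(1-\lvert K_{23}\rvert^{2})(1-\lvert K_{24}\rvert^{2})$, and substituting the real values $K_{23}=\cos va_{23}$, $K_{24}=\cos va_{42}$, $K_{34}=\cos va_{34}$ (using $va_{24}=va_{42}$) collapses the modulus and the conjugation to yield exactly (\ref{trig va}). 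Finally, (\ref{cauchy 3}) follows from (\ref{trig va}) by dividing both sides by $(1-\cos^{2}va_{23})(1-\cos^{2}va_{42})$ and applying the Pythagorean identity $1-\cos^{2}\theta=\sin^{2}\theta$.

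I do not expect a serious obstacle; the work is a translation rather than a new argument. The only genuine subtleties are the two already flagged: confirming via Proposition \ref{real} that the assembled tetrahedron can be realized in real hyperbolic space, and the division step producing (\ref{cauchy 3}), which is only valid when $\sin va_{23}$ and $\sin va_{42}$ are nonzero. In the degenerate case where a vertex angle equals $0$ or $\pi$ (equivalently some $\lvert\cos va_{ij}\rvert=1$), the denominator vanishes; there (\ref{trig va}) forces the equality $\cos va_{34}=\cos va_{23}\cos va_{42}$, and (\ref{cauchy 3}) should be read as the corresponding limiting statement. I would note this briefly rather than belabor it, since (\ref{det}) and (\ref{trig va}) already give a complete and degeneracy-free characterization.
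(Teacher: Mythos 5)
Your proposal is correct and follows essentially the same route as the paper: the paper's proof likewise dismisses $\mathcal{M}_{CV\!\!A}\succcurlyeq0$, (\ref{det}), and (\ref{trig va}) as specializations of Theorem \ref{Q 1} (with Lemma \ref{matrix condition}), treats (\ref{cauchy 3}) as a mere rewriting of (\ref{trig va}), and handles the realization in \emph{real} hyperbolic space exactly as you do, via the reality of the matrix entries and Proposition \ref{real} in the paragraph preceding the corollary. Your added caveat about vanishing $\sin va_{ij}$ in (\ref{cauchy 3}) is a small point of care the paper leaves implicit, but it does not change the argument.
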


\begin{proof}
The conditions $\mathcal{M}_{CV\!\!A}(\left\{  T_{i}\right\}  _{i=2}%
^{4})\succcurlyeq0,$ (\ref{det}), and (\ref{trig va}) are repeats of parts of
Theorem \ref{Q 1}. The final statement is a rewriting of (\ref{trig va}) which
will be useful later.
\end{proof}

Each of these conditions is necessary and sufficient conditions for the vertex
angles of a tetrahedron in $\mathbb{RH}^{2}.$ We will see a much simpler
equivalent condition in Proposition \ref{conditions}.

\subsubsection{Dihedral Angles\label{da}}

Going forward we will suppose that the tetrahedra we consider in
$\mathbb{RH}^{k}$ are nondegenerate, they are not congruent to tetrahedra in
$\mathbb{RH}^{2}.$ It will be convenient now realize $\mathbb{RH}^{3}$ as the
Poincar\'{e} ball model. In that model the manifold is the real three ball,
$\mathbb{RB}_{3},$ the group action is the group of conformal automorphisms of
the ball, and the "sphere at infinity" of $\mathbb{RH}^{3}$ is the Euclidean
two sphere $\mathbb{S}_{2}=\partial\mathbb{RB}_{3}.$ These choices let us use
Euclidean coordinate geometry.

If $X=\left\{  x_{i}\right\}  _{i=1}^{4}$ is a tetrahedron in $\mathbb{RH}%
^{3}$ then we know from Proposition \ref{vertex} that the vertex angles
$\left\{  va_{ij}\right\}  $ describe the congruence class of the trivalent
vertex at $x_{1}.$ That vertex can also be described using the angles between
faces, the \textit{dihedral angles}. In fact the dihedral angles are used more
commonly than vertex angles in describing real hyperbolic polyhedra; see for
instance \cite{W}, \cite{FG}, \cite{HR}, \cite{Roe}, and the references there.
We now look briefly at describing the dihedral angles of a tetrahedron in our
framework and at their relation to the vertex angles.

The work in this section is influenced by the work of Roeder in \cite{Roe} and
there are overlaps. In \cite{Roe} tetrahedra are described using the matrix of
negatives of cosines of dihedral angles. The matrix $\mathcal{M}_{-CDA}(X)$ we
introduce below is a submatrix of that matrix and some of the results about
submatrices in Theorem 1 in \cite{Roe} match some of the results in Theorem
\ref{Q 1 DA} below. There are also differences. In particular the work in
\cite{Roe} restricts attention to tetrahedra with non-obtuse dihedral angles.

In the next few paragraphs we will use the indices $r,s,t$ to denote three
different indices from the set $\left\{  2,3,4\right\}  .$ Given the
tetrahedron $X$ in $\mathbb{RH}^{3}$ with $x_{1}$ at the origin denote the
triangular face with vertices $\left\{  x_{1},x_{i},x_{j}\right\}  $ by
$F_{ij}.$ The dihedral angle along edge $s,$ $da_{s},$ $s=2,3,4,$ is the angle
between the faces $F_{rs}$ and $F_{st}.$ If this were a Euclidean tetrahedron
we could find $da_{s}$ by finding the inward pointing unit normals, $n_{rs}$
for the face $F_{rs},$ and using the fact that $-\cos da_{s}=\left\langle
n_{rs},n_{st}\right\rangle .$ The requirement that $n_{rs}$ be inward pointing
is the requirement that $\left\langle n_{rs},x_{t}\right\rangle \geq0.$
However the formula for $da_{s}$ is unchanged if the normals are replaced by
their negatives and hence it is enough to construct the normals so that the
inner products $\left\langle n_{rs},x_{t}\right\rangle $ all have the same
sign. Taking note of the fact that $\left\langle a,b\times c\right\rangle
=\left\langle b,c\times a\right\rangle $ for vectors in $\mathbb{R}^{3}$ we
see that the choices
\begin{equation}
n_{rs}=\frac{x_{r}\times x_{s}}{\left\Vert x_{r}\times x_{s}\right\Vert
}=\frac{\widehat{x_{r}}\times\widehat{x_{s}}}{\left\Vert \widehat{x_{r}}%
\times\widehat{x_{s}}\right\Vert } \label{normal}%
\end{equation}
insures that the sign of $\left\langle n_{rs},x_{t}\right\rangle $ is
unchanged by cyclic permutation of the indices.\ Hence we can compute
\begin{align}
\cos da_{s}  &  =-\,\left\langle \left\langle n_{rs},n_{st}\right\rangle
\right\rangle \label{cos da}\\
&  =-\left\langle \left\langle \frac{\widehat{x_{r}}\times\widehat{x_{s}}%
}{\left\Vert \widehat{x_{r}}\times\widehat{x_{s}}\right\Vert },\frac
{\widehat{x_{s}}\times\widehat{x_{t}}}{\left\Vert \widehat{x_{s}}%
\times\widehat{x_{t}}\right\Vert }\right\rangle \right\rangle \nonumber\\
&  =-\frac{\left\langle \left\langle \widehat{x_{r}},\widehat{x_{s}%
}\right\rangle \right\rangle \left\langle \left\langle \widehat{x_{s}%
},\widehat{x_{t}}\right\rangle \right\rangle -\left\langle \left\langle
\widehat{x_{r}},\widehat{x_{t}}\right\rangle \right\rangle \left\langle
\left\langle \widehat{x_{s}},\widehat{x_{s}}\right\rangle \right\rangle
}{\left\Vert \widehat{x_{r}}\times\widehat{x_{s}}\right\Vert \left\Vert
\widehat{x_{s}}\times\widehat{x_{t}}\right\Vert }\nonumber\\
&  =\frac{\cos va_{rt}-\cos va_{rs}\cos va_{st}}{\sin va_{rs}\sin va_{st}%
}.\nonumber
\end{align}

This was a Euclidean computation but the hyperbolic geometry at the origin is
conformal to the Euclidean geometry. Hence the appropriate hyperbolic
computations, using tangent lines to geodesics and tangent planes to faces,
all with their tangencies at the origin, leads to that Euclidean computation.

We will study the dihedral angles of the $X$ at $x_{1}$ using the matrix
\[
\mathcal{M}_{-CDA}(X)=%
\begin{pmatrix}
1 & -\cos da_{23} & -\cos da_{24}\\
-\cos da_{32} & 1 & -\cos da_{34}\\
-\cos da_{42} & -\cos da_{43} & 1
\end{pmatrix}
.
\]
Here the subscript is a reminder that the matrix entries are the negatives of
cosines of dihedral angles.

Our computation of dihedral angles gives one statement of the hyperbolic law
of cosines.

\begin{definition}
Suppose that for $2\leq i,j\leq4$ we are given angles $\left\{  va_{ij}%
\right\}  $ and $\left\{  da_{ij}\right\}  $ with, for all $i,j,$
$va_{ij}=va_{ji},$ $da_{ij}=da_{ji},$ $va_{ii}=0,$ $da_{ii}=\pi;$ set
$VA_{ii}=0,$ $DA_{ii}=\pi.$ Define
\begin{align}
DA_{rs}  &  =\frac{\cos va_{rs}-\cos va_{tr}va_{ts}}{\sin va_{tr}\sin va_{ts}%
},\label{hyperbolic cosine}\\
VA_{rs}  &  =\frac{\cos da_{rs}+\cos da_{tr}\cos da_{ts}}{\sin da_{tr}\sin
da_{ts}}. \label{hyperbolic cosine 2}%
\end{align}

\end{definition}

The vertex angles and dihedral angles of a hyperbolic tetrahedron in
$\mathbb{RH}^{n}$are related by those two formulas.

\begin{lemma}
[Hyperbolic Law of Cosines]\label{hlc}If $X=\left\{  x_{1}\right\}  _{i=1}%
^{4}\subset\mathbb{RH}^{n}$ has vertex angles at $x_{1}$ $\left\{
va_{ij}\right\}  _{i,j=2}^{4}$ and dihedral angles $\left\{  da_{ij}\right\}
_{i,j=2}^{4},$ then, with the notation (\ref{hyperbolic cosine}) and
(\ref{hyperbolic cosine 2}),%
\[
\cos da_{ij}=DA_{ij},\text{ \ \ }\cos va_{ij}=VA_{ij}.,\text{ \ \ }2\leq
i,j\leq4.
\]

\end{lemma}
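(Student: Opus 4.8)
The plan is to recognize both displayed identities as the two classical laws of cosines for a single spherical triangle, the link of the vertex $x_1$, so that essentially no new computation is needed beyond what precedes the lemma. The first identity $\cos da_{ij} = DA_{ij}$ requires only bookkeeping: the computation (\ref{cos da}) already produced the dihedral angle as $\cos da_s = (\cos va_{rt} - \cos va_{rs}\cos va_{st})/(\sin va_{rs}\sin va_{st})$, and the definition (\ref{hyperbolic cosine}) is exactly this quotient rewritten in the two-index notation (reading the numerator's second term as the product $\cos va_{tr}\cos va_{ts}$). So the first step is to fix once and carefully the correspondence between the two-index symbol $da_{ij}$ and the single-index $da_s$ of the surrounding text — identifying which edge each $da_{ij}$ runs along and which index plays the role of the apex — and then observe that $\cos da_{ij} = DA_{ij}$ is a transcription of (\ref{cos da}).

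For the second identity I would make the spherical picture explicit. Working in the Poincar\'{e} ball model with $x_1$ at the origin, the hyperbolic metric is conformal to the Euclidean one at $x_1$, so the three rays $x_1x_2$, $x_1x_3$, $x_1x_4$ cut a small geodesic sphere about $x_1$ in three points forming a genuine spherical triangle $\mathcal{T}$. By construction, and as recorded in (\ref{normal})--(\ref{cos da}), the side lengths of $\mathcal{T}$ are the vertex angles $va_{ij}$ and the interior angles of $\mathcal{T}$ are the dihedral angles $da_{ij}$. Under this identification the first identity is precisely the spherical law of cosines for the \emph{sides} of $\mathcal{T}$, and the desired second identity is the law of cosines for the \emph{angles} of $\mathcal{T}$.

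The cleanest route to the angle law is polar duality. I would pass to the polar triangle $\mathcal{T}^{*}$, whose sides are $\pi$ minus the angles of $\mathcal{T}$ and whose angles are $\pi$ minus the sides of $\mathcal{T}$, and then apply the already-established side law to $\mathcal{T}^{*}$. Using $\cos(\pi-\theta) = -\cos\theta$ and $\sin(\pi-\theta) = \sin\theta$, every product of two cosines acquires a net sign change, and solving the resulting relation for the relevant $\cos va_{ij}$ yields exactly the quotient $(\cos da_{ij} + \cos da_{tr}\cos da_{ts})/(\sin da_{tr}\sin da_{ts})$ of (\ref{hyperbolic cosine 2}); the sign flip is what converts the minus in the numerator of the side law into the plus here. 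As an alternative that avoids any appeal to polar duality, one could instead treat the three equations of the first identity as a system and invert it directly, solving for the $\cos va_{ij}$ in terms of the $\cos da_{ij}$ — elementary but messier.

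I expect the main obstacle to be notational rather than mathematical: the lemma uses the two-index symbol $da_{ij}$, which the text introduces only through the matrix $\mathcal{M}_{-CDA}(X)$ and never defines in words (it works with the single-index $da_s$). The real care is in pinning down that correspondence and then checking that the apex indices appearing in (\ref{hyperbolic cosine}) and (\ref{hyperbolic cosine 2}) line up correctly with the vertices of $\mathcal{T}$ and their opposite sides. Finally, nondegeneracy of the tetrahedron guarantees that $\mathcal{T}$ is a nondegenerate spherical triangle, so all the sines in the denominators are nonzero and the divisions are legitimate.
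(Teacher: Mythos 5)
Your proposal is correct and follows essentially the route the paper itself indicates: the paper offers no proof of Lemma \ref{hlc} beyond citing classical references and remarking that the formulas follow either from Euclidean vector computations or from the spherical law of cosines applied to the link triangle $\hat{X}$, and your argument is exactly that --- reading $\cos da_{ij}=DA_{ij}$ off the computation (\ref{cos da}) already done in the text, and obtaining $\cos va_{ij}=VA_{ij}$ as the dual (angle) law of cosines for $\hat{X}$ via polar duality, the same duality the paper exploits in its subsequent $\mathcal{GVA}$/$\mathcal{GDA}$ proposition. Your flagged care points (the two-index versus single-index correspondence for $da_{ij}$, and nondegeneracy keeping the sines nonzero) are exactly right and complete the details the paper delegates to \cite{Roe} and \cite{An}.
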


These formulas are classical, for instance \cite{Co}, \cite{J}. They can be
obtained by doing Euclidean vector computations. Alternatively they are
consequences of the spherical law of cosines applied to the triangle $\hat{X}
$ along with the relation between the geometry of $X$ and of $\hat{X}.$
Details are in \cite{Roe} and Chapter 5 of \cite{An} and the references there.

We separated the definition from the lemma because we can use the definitions
even if the $va_{ij}$ are not known to be data from a tetrahedron. If we have
a set of triangles $\left\{  T_{i}\right\}  _{i=2}^{4}$ in $\mathbb{RH}^{k}$
which satisfy the coherence conditions for assembly into a tetrahedron,
$\left\{  T_{i}\right\}  _{i=2}^{4}\rightrightarrows\,??$, then we can
construct the matrix $\mathcal{M}_{CV\!\!A}(\left\{  T_{i}\right\}  _{i=2}%
^{4}),$ Using that data in (\ref{hyperbolic cosine}) we can compute imputed
values of the $DA_{ij}.$ If $\left\{  T_{i}\right\}  _{i=2}^{4}%
\rightrightarrows X$ for a tetrahedron $X$ then those values will be cosines
of angles and satisfy $\left\vert D_{ij}\right\vert \leq1.$ In fact the
condition $\left\vert D_{ij}\right\vert \leq1$ is also sufficient for there to
be an $X;$ comparing the formula (\ref{hyperbolic cosine}) with
(\ref{cauchy 3}) we obtain

\begin{corollary}
If $\left\{  T_{i}\right\}  _{i=2}^{4}\rightrightarrows\,??$ and $DA_{ij}$ is
computed using the $va_{ij}$ values from $\mathcal{M}_{CV\!\!A}(\left\{
T_{i}\right\}  _{i=2}^{4})$ and (\ref{hyperbolic cosine}) then there is a
tetrahedron $X$ with $\left\{  T_{i}\right\}  _{i=2}^{4}\rightrightarrows X$
if and only if for some $i,j$ $\left\vert DA_{ij}\right\vert \leq1.$
\end{corollary}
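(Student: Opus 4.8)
The plan is to show that the quantity $DA_{ij}$ defined by (\ref{hyperbolic cosine}) is, after a choice of index labels, exactly the expression already appearing in (\ref{cauchy 3}), so that the corollary is essentially a transcription of Corollary \ref{first real}. First I would recall from that corollary that, given $\{T_i\}_{i=2}^4\rightrightarrows\,??$, a tetrahedron $X$ with $\{T_i\}_{i=2}^4\rightrightarrows X$ exists if and only if $\mathcal{M}_{CV\!\!A}(\{T_i\}_{i=2}^4)\succcurlyeq0$, and that this is equivalent to each of (\ref{det}), (\ref{trig va}), and (\ref{cauchy 3}). Since the off-diagonal entries of $\mathcal{M}_{CV\!\!A}(\{T_i\})$ are cosines of vertex angles, each has modulus at most $1$, which is precisely the hypothesis needed to invoke Lemma \ref{matrix condition}.

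Next comes the key comparison. Specializing (\ref{hyperbolic cosine}) to $r=3$, $s=4$, $t=2$ gives
\[
DA_{34}=\frac{\cos va_{34}-\cos va_{23}\cos va_{24}}{\sin va_{23}\sin va_{24}},
\]
and since $va_{24}=va_{42}$ this is exactly the quantity whose square stands on the left of (\ref{cauchy 3}). Hence $|DA_{34}|\le1$ is literally condition (\ref{cauchy 3}), which by the previous paragraph is equivalent to the existence of $X$. To account for the phrase ``for some $i,j$'', I would observe that the equivalence of items (3) and (4) in Lemma \ref{matrix condition} is symmetric in the three off-diagonal entries: for any distinct $r,s,t$ the inequality $|DA_{rs}|\le1$, after clearing the positive denominator $\sin va_{tr}\sin va_{ts}$, expands to the single inequality $0\le\det\mathcal{M}_{CV\!\!A}(\{T_i\})$. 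Thus every $DA_{ij}$ with nonvanishing denominator satisfies $|DA_{ij}|\le1$ precisely when $\det\mathcal{M}_{CV\!\!A}(\{T_i\})\ge0$; so the existence of a single index pair with $|DA_{ij}|\le1$ already forces every defined one to satisfy it, and is equivalent to $\det\ge0$, hence to the existence of $X$.

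The hard part will be the degenerate case in which a denominator $\sin va_{tr}\sin va_{ts}$ vanishes, i.e.\ some vertex angle equals $0$ or $\pi$ and the corresponding $DA_{ij}$ is undefined; this is exactly where the ``for some $i,j$'' wording earns its keep. I would check that if $\det\mathcal{M}_{CV\!\!A}(\{T_i\})\ge0$ while one off-diagonal cosine equals $\pm1$, then the determinant is forced to vanish and, with $c=ab$, at least one of the remaining $DA$ expressions retains a nonzero denominator and evaluates to $\pm1$, so that some $|DA_{ij}|\le1$ persists. The only configuration in which all three denominators vanish simultaneously is the fully collinear one, corresponding to four points on a single geodesic, which is ruled out by the standing nondegeneracy assumption of Section \ref{da}. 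Addressing this boundary behavior is the main subtlety; the remaining work is just the routine algebraic expansions already recorded in Lemma \ref{matrix condition}.
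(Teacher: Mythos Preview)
Your proposal is correct and follows exactly the approach the paper takes: the paper's entire argument is the one-line observation that formula (\ref{hyperbolic cosine}) specializes to the expression in (\ref{cauchy 3}), so $|DA_{ij}|\le1$ is literally that condition from Corollary \ref{first real}. You have simply filled in the details the paper leaves implicit, including the symmetry-in-indices point and the degenerate-denominator discussion, neither of which the paper addresses at all.
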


That corollary is a special case of a more general observation. If we have
$\left\{  T_{i}\right\}  _{i=2}^{4}\rightrightarrows\,??$ then we can
construct $\mathcal{M}_{CV\!\!A}(\left\{  T_{i}\right\}  _{i=2}^{4}).$ Using
the data from that matrix and the formula (\ref{hyperbolic cosine}) we can
construct $\mathcal{M}_{-CDA}(\left\{  T_{i}\right\}  _{i=2}^{4}),$ a matrix
that would be $\mathcal{M}_{-CDA}(X)$ if we knew there were an $X.$ {}Both
$\mathcal{M}_{CV\!\!A}(\left\{  T_{i}\right\}  _{i=2}^{4})$ and $\mathcal{M}%
_{-CDA}(\left\{  T_{i}\right\}  _{i=2}^{4})$ are matrices to which Lemma
\ref{matrix condition} applies. The formulas (\ref{hyperbolic cosine}) and
(\ref{hyperbolic cosine 2}) give equivalences between the conditions from
Lemma \ref{matrix condition} which insure $\mathcal{M}_{CV\!\!A}(\left\{
T_{i}\right\}  _{i=2}^{4})\succcurlyeq0$ and the conditions from that lemma
which would insure, $\mathcal{M}_{-CDA}(\left\{  T_{i}\right\}  _{i=2}%
^{4})\succcurlyeq0.$

Those equivalences are an instance of a systematic duality between results
about vertex angles and results about dihedral angles$.$ Here is another
instance. Given a set of three angles $\Gamma=\left\{  \gamma_{2},\gamma
_{3},\gamma_{4}\right\}  $ it may or may not be true that those angles can
arise as the vertex angles of a nondegenerate trivalent vertex $\mathbb{RH}%
^{k}.$ If they can we say that $\Gamma$ are \textit{good vertex angles} and
write $\Gamma\in\mathcal{GVA}$. It also may of may not be true that those
angles can arise as a set of dihedral angles of a nondegenerate trivalent
vertex in $\mathbb{RH}^{k}.$ If so we call them \textit{good dihedral angles}
and write $\Gamma\in\mathcal{GDA}$. We have the following duality.

\begin{proposition}
Given the set of angles $\Gamma=\left\{  \gamma_{2},\gamma_{3},\gamma
_{4}\right\}  $ and the set $\Pi-\Gamma=$ $\left\{  \pi-\gamma_{2},\pi
-\gamma_{3},\pi-\gamma_{4}\right\}  ,$ then $\Gamma\in\mathcal{GVA}$ if and
only if $\Pi-\Gamma\in\mathcal{GDA}$.
\end{proposition}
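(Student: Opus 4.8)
The plan is to characterize membership in $\mathcal{GVA}$ and in $\mathcal{GDA}$ by the positive definiteness of the two $3\times3$ matrices already in play, and then to read off the duality from the elementary identity $-\cos(\pi-\gamma)=\cos\gamma$. Write $\Gamma=\{\gamma_2,\gamma_3,\gamma_4\}$, let $\mathcal{M}_{CV\!\!A}(\Gamma)$ denote the matrix (\ref{CVA}) with $1$'s on the diagonal and the $\cos\gamma_i$ off the diagonal, and let $\mathcal{M}_{-CDA}(\Gamma)$ be the analogous matrix with $-\cos\gamma_i$ off the diagonal. First I would record that $\Gamma\in\mathcal{GVA}$ if and only if $\mathcal{M}_{CV\!\!A}(\Gamma)\succ0$. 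This is the strict (nondegenerate) case of the vertex description: by Corollary \ref{vertexx} a trivalent vertex in $\mathbb{RH}^k$ is recorded, up to congruence, by the Gram matrix $\mathcal{M}(\mathbf V)$ of its three unit direction vectors $\hat x_i$, whose entries are the $\cos va_{ij}$; these entries are real by Proposition \ref{real}, and the vertex is nondegenerate (not contained in a copy of $\mathbb{RH}^2$) exactly when that Gram matrix is positive definite rather than merely semidefinite, as in the general position corollary.

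The dual statement is that $\Gamma\in\mathcal{GDA}$ if and only if $\mathcal{M}_{-CDA}(\Gamma)\succ0$. For the forward direction I would use (\ref{normal}) and (\ref{cos da}): for a nondegenerate trivalent vertex the three unit face normals $n_{rs}$ satisfy $\langle\langle n_{rs},n_{st}\rangle\rangle=-\cos da_s$, so $\mathcal{M}_{-CDA}$ is the Gram matrix of $\{n_{rs}\}$, hence positive semidefinite, and positive definite precisely when the three normals are linearly independent, i.e. when the vertex is nondegenerate. For the converse I would begin from a positive definite matrix with $1$'s on the diagonal and $-\cos\delta_i$ off the diagonal, realize it as the Gram matrix of three linearly independent unit vectors in $\mathbb{R}^3$ (unique up to an orthogonal map), regard these as prescribed face normals, and reconstruct from them a nondegenerate trivalent vertex whose dihedral angles are the $\delta_i$. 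Conceptually this is spherical polar duality: the $\hat x_i$ are the vertices of a nondegenerate spherical triangle whose side lengths are the $va_{ij}$ and whose interior angles are the $da_s$, and the normals $n_{rs}$ are the vertices of the polar triangle.

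With both characterizations in hand the conclusion is immediate. Because $-\cos(\pi-\gamma_i)=\cos\gamma_i$, the matrix $\mathcal{M}_{-CDA}(\Pi-\Gamma)$ coincides entrywise with $\mathcal{M}_{CV\!\!A}(\Gamma)$. Hence
\[
\Gamma\in\mathcal{GVA}\iff\mathcal{M}_{CV\!\!A}(\Gamma)\succ0\iff\mathcal{M}_{-CDA}(\Pi-\Gamma)\succ0\iff\Pi-\Gamma\in\mathcal{GDA},
\]
which is the proposition.

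The main obstacle is the converse half of the dihedral characterization in the second paragraph: turning a prescribed positive definite $\mathcal{M}_{-CDA}$ back into an actual trivalent vertex and verifying that the reconstructed dihedral angles are the $\delta_i$ themselves rather than their supplements. This is where the inward-pointing sign convention in (\ref{normal}) must be tracked carefully, and where one invokes that the polar of a nondegenerate spherical triangle is again nondegenerate and that polarity is an involution, so that every nondegenerate triple of normals does arise from a genuine vertex. The remaining steps are the already-established vertex description and a one-line trigonometric identity.
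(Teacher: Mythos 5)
Your proposal is correct, and its mathematical core coincides with the paper's, but the logical organization is genuinely inverted, which is worth noting. The paper proves the proposition directly: given a tetrahedron realizing one set of angles, it uses Theorem \ref{Q 1} to produce a dual tetrahedron $Y$ with $\mathcal{M}_{CV\!\!A}(Y)=\mathcal{M}_{-CDA}(X)$, and then identifies the dihedral angles of $Y$ by the explicit double cross-product computation $\left(  x_{i}^{\ast}\times x_{k}^{\ast}\right)  \times\left(  x_{j}^{\ast}\times x_{k}^{\ast}\right)  \parallel x_{k}^{\ast}$, with the inward-pointing convention of (\ref{normal}) fixing the sign; only afterwards does it extract the free-standing dihedral characterization as Theorem \ref{Q 1 DA}. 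You instead prove the two matrix characterizations first --- $\Gamma\in\mathcal{GVA}$ iff $\mathcal{M}_{CV\!\!A}(\Gamma)\succ0$ and $\Gamma\in\mathcal{GDA}$ iff $\mathcal{M}_{-CDA}(\Gamma)\succ0$ --- so that the proposition drops out of the one-line identity $-\cos(\pi-\gamma)=\cos\gamma$; in effect you establish (a strict form of) Theorem \ref{Q 1 DA} up front and derive the proposition as a corollary, whereas the paper derives \ref{Q 1 DA} from the proposition. Your route buys a cleaner statement of the duality and delivers the dihedral characterization as a byproduct; the paper's route avoids proving the converse half of that characterization ab initio, getting it for free from Theorem \ref{Q 1} plus the involution computation. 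The one step you leave as a sketch --- checking that reconstructing a vertex from prescribed normals returns the angles $\delta_i$ rather than their supplements --- is exactly the step the paper carries out in full, and your appeal to the involutivity of spherical polar duality with sign tracking is the correct way to close it (note also that your strict-definiteness bookkeeping is sound: the directions $\widehat{x_{i}}$ are linearly independent iff their pairwise cross products are, via the adjugate, so nondegeneracy passes through both Gram matrices). So there is no gap, only a deferral of the same computation the paper performs explicitly; one small point in your favor is that you are more careful than the paper about $\succ$ versus $\succcurlyeq$, since the definitions of $\mathcal{GVA}$ and $\mathcal{GDA}$ require nondegenerate vertices while the paper's proof invokes Theorem \ref{Q 1} only in its semidefinite form.
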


Note that $\Pi-(\Pi-\Gamma)=\Gamma$ and hence the proposition is symmetric in
the two sets of angles.

\begin{proof}
First suppose $\Gamma\in\mathcal{GDA}$. In that case there is a tetrahedron
$X$ with dihedral angles $\Gamma$ and the negatives of the cosines of elements
of $\Gamma$ as entries in $\mathcal{M}_{-CDA}(X).$ We want to find a
tetrahedron $Y$ which shows $\Pi-\Gamma\in\mathcal{GVA}$. To do that consider
the matrix $\mathcal{M}_{-CDA}(X).$ Because it was constructed as a Gram
matrix (of three normal vectors) we know it is positive semidefinite. Hence
from Theorem \ref{Q 1} here is a tetrahedron $Y$ with $\mathcal{M}%
_{CV\!\!A}(Y)=\mathcal{M}_{-CDA}(X).$ Noting the identity $-\cos\gamma
=\cos(\pi-\gamma)$ we see that the entries of $\mathcal{M}_{CV\!\!A}(Y)$ are
the cosines of the angles in $\Pi-\Gamma.$ Hence this is the required $Y.$

Suppose now that $\Gamma\in\mathcal{GVA}$, specifically $\Gamma$ are the
vertex angles of a tetrahedron $X.$ As before the matrix $\mathcal{M}%
_{-CDA}(X)$ equals $\mathcal{M}_{CV\!\!A}(Y)$ for some tetrahedron $Y=\left\{
y_{i}\right\}  _{i=1}^{4}.$ Let $\Gamma_{Y}$ be the set of dihedral angles of
$Y$ and hence $\Gamma_{Y}\in\mathcal{GDA}$. We will be finished if we show
$\Pi-\Gamma_{Y}=\Gamma$ or, equivalently, that the cosines of elements of
$\Gamma.$are the negatives of the cosines of elements of $\Gamma_{Y}$. To see
this we analyze the construction of the cosines of the dihedral angles of $Y.$
The normal vectors to a face of $Y$ will be parallel to a vector $y_{i}^{\ast
}\times y_{j}^{\ast}$. By construction $y_{i}^{\ast}$ is parallel to
$x_{r}^{\ast}\times x_{s}^{\ast}$ where neither $r$ nor $s$ is equal to $i.$
Thus the normal vector will be parallel to ($x_{r}^{\ast}\times x_{s}^{\ast
})\times(x_{p}^{\ast}\times x_{q}^{\ast})$ where, also, neither $p$ nor $q$ is
equal to $j.$Because we only have three indices the normal must be parallel to
($x_{i}^{\ast}\times x_{k}^{\ast})\times(x_{j}^{\ast}\times x_{k}^{\ast})$
where $k$ is the index that is not $i$ or $j$. The normal is perpendicular to
the first factor and hence in the plane spanned by $x_{i}^{\ast}\ $and
$x_{k}^{\ast}. $ Similarly, looking at the second factor we see the normal is
in the span of $x_{i}^{\ast}\ $and $x_{k}^{\ast}.$ Hence it is in the
intersection of those spans which is the span of $x_{k}^{\ast}$ (recall that
we assumed nondegeneracy). The normal vector is a unit vector in the span
$x_{k}^{\ast} $ and hence is $\pm x_{k}^{\ast}.$ We want the inward facing
normal and hence it is $x_{k}^{\ast}.$The inner products of these normal
vectors are, by definition, the negatives of the cosines of the dihedral
angles of $Y;$ they are also, by inspection, the cosines of the vertex angles
of $X.$ Thus the two are equal, as we wanted.
\end{proof}

(The previous results were stated and proved in the language of vertex angles
and dihedral angles of a hyperbolic tetrahedron. However the results can also
be formulated and proved in the language of spherical geometry. The previous
result is the relationship between the angles in the spherical triangle
$\hat{X}=\{\hat{x}_{2},\hat{x}_{3},\hat{x}_{4}\}\subset\mathbb{S}_{2}$ and the
angles in its polar dual. Alternatively the results are about the Euclidean
geometry of $\mathbb{R}^{3}$ associated with the cross product.)

Using the details of the previous proof lets us give a version of Theorem
\ref{Q 1} for dihedral angles. Let $\mathcal{L}=\left(  L_{ij}\right)
_{i,j=2}^{4}$ be the symmetric matrix with real entries given by
\[
\mathcal{L=}%
\begin{pmatrix}
1 & L_{23} & L_{24}\\
L_{32} & 1 & L_{24}\\
L_{42} & L_{42} & 1
\end{pmatrix}
\]

\begin{theorem}
\label{Q 1 DA}The matrix $\mathcal{L}$ is the matrix $\mathcal{M}_{-CDA}(X)$
of a tetrahedron $X$ in $\mathbb{RH}^{k}$ if and only $\mathcal{L}%
\succcurlyeq0.$

\begin{proof}
We saw in the previous proof that for any $X$ we have $\mathcal{M}%
_{-CDA}(X)\succcurlyeq0.$ Suppose now we have $\mathcal{L}\succcurlyeq0$ then
by Theorem \ref{Q 1} $\mathcal{L}=\mathcal{M}_{CV\!\!A}(Y)$ for a tetrahedron
$Y$ in $\mathbb{RH}^{k}$. We now follow the pattern of the previous proof.
From $\mathcal{M}_{CV\!\!A}(Y)$ we obtain $\mathcal{M}_{-CDA}(Y),$ and then,
by Theorem \ref{Q 1}, $\mathcal{M}_{-CDA}(Y)=\mathcal{M}_{CV\!\!A}(Z)$ for
some $Z.$ Now consider the matrix $\mathcal{M}_{-CDA}(Z).$ The argument in the
previous proof shows that this type of repeated passage from vertex angles to
dihedral angles reproduces the original data; in particular $\mathcal{L}%
=\mathcal{M}_{-CDA}(Z)$ which shows that $\mathcal{L}$ is of the required form.
\end{proof}
\end{theorem}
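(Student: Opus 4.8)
The plan is to treat the two implications separately, with essentially all of the content living in the converse. For the forward implication, suppose $\mathcal{L}=\mathcal{M}_{-CDA}(X)$ for a tetrahedron $X$ in $\mathbb{RH}^{k}$. By the construction recorded in (\ref{normal})–(\ref{cos da}), the entries $-\cos da_{s}$ are exactly the pairwise inner products $\langle\langle n_{rs},n_{st}\rangle\rangle$ of the three unit normals $n_{rs}=\widehat{\widehat{x_{r}}\times\widehat{x_{s}}}$. Hence $\mathcal{M}_{-CDA}(X)$ is the Gram matrix of those normals, and a Gram matrix is automatically positive semidefinite; this is the same observation already used in the duality proposition proved just above. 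Therefore $\mathcal{L}\succcurlyeq0$.

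For the converse I would run the double--dualization scheme. Starting from $\mathcal{L}\succcurlyeq0$ with $1$'s on the diagonal (its $2\times2$ principal minors force each $|L_{ij}|\leq1$, and its entries are real by hypothesis), Theorem \ref{Q 1} produces a tetrahedron $Y$ with $\mathcal{M}_{CV\!\!A}(Y)=\mathcal{L}$; since $\mathcal{L}$ is real the realizing vectors $\widehat{y_{i}}$ may be taken in $\mathbb{R}^{n}$, so $Y$ lies in a copy of $\mathbb{RH}^{k}$ (cf. Proposition \ref{real}). Next I would form $\mathcal{M}_{-CDA}(Y)$ from the vertex--angle data of $Y$ through (\ref{cos da}); by the forward implication this matrix is again positive semidefinite, so a second application of Theorem \ref{Q 1} yields a tetrahedron $Z$ with $\mathcal{M}_{CV\!\!A}(Z)=\mathcal{M}_{-CDA}(Y)$. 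The claim is then that $\mathcal{M}_{-CDA}(Z)=\mathcal{L}$, whence $X:=Z$ is the required tetrahedron.

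The decisive step, and the place I expect the real work, is verifying that this repeated passage from vertex angles to dihedral angles is an involution at the level of Gram matrices. This is precisely the cross--product bookkeeping from the proof of the duality proposition: realizing $\mathcal{M}_{CV\!\!A}(Z)=\mathcal{M}_{-CDA}(Y)$ means the unit vectors $\widehat{z_{i}}$ defining $Z$ are, up to a rotation, the face normals $\widehat{\widehat{y_{r}}\times\widehat{y_{s}}}$ of $Y$; computing the normals $\widehat{\widehat{z_{i}}\times\widehat{z_{j}}}$ of $Z$ and invoking the identity $(u\times w)\times(v\times w)\parallel w$ together with nondegeneracy shows that these reduce to $\pm\widehat{y_{k}}$. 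Consequently $\mathcal{M}_{-CDA}(Z)$ is the Gram matrix of $\{\widehat{y_{k}}\}$, which is $\mathcal{M}_{CV\!\!A}(Y)=\mathcal{L}$. I would emphasize that the Gram matrix is insensitive to the rotational ambiguity inherent in realizing a positive semidefinite matrix by vectors, so the equality $\mathcal{M}_{-CDA}(Z)=\mathcal{L}$ is unambiguous. The one delicate point is the boundary case $\det\mathcal{L}=0$, since the cross--product argument lives in $\mathbb{R}^{3}$ and presumes the nondegeneracy standing assumption of this subsection; there I would either appeal to that standing assumption or recover the singular case by continuity from $\mathcal{L}\succ0$.
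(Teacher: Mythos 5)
Your proposal is correct and takes essentially the same route as the paper's proof: the forward direction via the observation that $\mathcal{M}_{-CDA}(X)$ is a Gram matrix of unit normals, and the converse via the same double dualization through Theorem \ref{Q 1}, with the cross-product computation from the duality proposition showing the passage from vertex angles to dihedral angles is an involution. Your added care about the rotational ambiguity in realizing a Gram matrix and about the boundary case $\det\mathcal{L}=0$ merely makes explicit what the paper covers by its standing nondegeneracy assumption in this subsection.
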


If we suppose that the $\left\vert L_{ij}\right\vert \leq1,$ for instance if
the $\left\{  L_{ij}\right\}  $ are the negatives of cosines of angles, then
we can apply Lemma \ref{matrix condition} to obtain various statements
equivalent to $\mathcal{L}\succcurlyeq0.$ If $\mathcal{L}=\left(
L_{ij}\right)  $ with $L_{ij}=-\cos da_{ij}$ for some angles $da_{ij}=da_{ji}$
and $da_{ii}=\pi$ then those angles are the dihedral angles at the vertex of a
tetrahedron in $\mathbb{RH}^{k}$ if and only if $\mathcal{L}\succcurlyeq0$ and
that happens if and only if
\begin{equation}
(\cos da_{34}+\cos da_{23}\cos da_{42})^{2}\leq(1-\cos^{2}da_{23})(1-\cos
^{2}da_{42}) \label{trig da}%
\end{equation}
(or the similar result with a different distinguished index). This condition
and the earlier (\ref{trig va}) are analogs of (\ref{hopw}) for the values of
$\operatorname*{kos}$ at the vertex of a tetrahedron in $\mathbb{CH}^{n}.$
However in this case the fact that we are working with real numbers allows
very substantial simplifications.

\begin{proposition}
\label{conditions}

\begin{enumerate}
\item If $\Gamma=\left\{  \alpha,\beta,\gamma\right\}  \subset\left(
0,\pi\right)  $ then $\Gamma\in\mathcal{GVA}$ if and only if
\begin{equation}
\alpha\leq\beta+\gamma. \label{tia}%
\end{equation}
That inequality is sometimes called the "triangle inequality for angles".

\item If $\Gamma=\left\{  A,B,C\right\}  \subset\left(  0,\pi\right)  $ then
$\Gamma\in\mathcal{GDA}$ if and only if%
\begin{equation}
\pi\leq A+B+C \label{tid}%
\end{equation}

\end{enumerate}
\end{proposition}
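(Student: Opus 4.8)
The plan is to reduce each part to a positivity statement about a $3\times 3$ real symmetric matrix and then read off the inequalities from the classical factorization of its determinant. For part (1), Corollary \ref{first real} (which is Theorem \ref{Q 1} specialized to real entries) tells us that a set $\Gamma=\{\alpha,\beta,\gamma\}$ lies in $\mathcal{GVA}$ exactly when the matrix $\mathcal{M}_{CV\!\!A}$ carrying these cosines off the diagonal satisfies $\mathcal{M}_{CV\!\!A}\succcurlyeq 0$. Geometrically this matrix is the Gram matrix of the three unit ray-directions $\widehat{x_2},\widehat{x_3},\widehat{x_4}$, so $\mathcal{M}_{CV\!\!A}\succcurlyeq 0$ says precisely that three unit vectors with the prescribed pairwise angles exist, i.e.\ that there is a spherical triangle on the link of the vertex with side lengths $\alpha,\beta,\gamma$. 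First I would record this dictionary explicitly, since it is what makes the ``triangle inequality for angles'' the natural condition.

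The computational heart is the determinant. Since every entry is a real cosine of absolute value at most $1$, Lemma \ref{matrix condition} reduces $\mathcal{M}_{CV\!\!A}\succcurlyeq 0$ to $\det\mathcal{M}_{CV\!\!A}\geq 0$, i.e.\ to
\[
1-\cos^{2}\alpha-\cos^{2}\beta-\cos^{2}\gamma+2\cos\alpha\cos\beta\cos\gamma\geq 0 .
\]
The next step is the spherical Heron identity: with $s=(\alpha+\beta+\gamma)/2$ this quantity equals $4\sin s\,\sin(s-\alpha)\,\sin(s-\beta)\,\sin(s-\gamma)$, which I would verify by a direct product-to-sum expansion. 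For $\alpha,\beta,\gamma\in(0,\pi)$ one has $s-\alpha\in(-\pi/2,\pi)$, so $\sin(s-\alpha)\geq 0$ is equivalent to $\alpha\leq\beta+\gamma$, and likewise for the other two factors, while $\sin s\geq 0$ is equivalent to $\alpha+\beta+\gamma\leq 2\pi$. A short sign bookkeeping then shows the product can be nonnegative only when \emph{all four} factors are nonnegative: if one triangle inequality fails the other three factors are strictly positive, forcing the product negative; and a failure of $\alpha+\beta+\gamma\le 2\pi$ itself forces all three triangle inequalities, again leaving a single negative factor. Reading $\alpha\le\beta+\gamma$ in the set-wise sense ``the largest angle is at most the sum of the other two'' disposes of the three triangle inequalities at once and yields the displayed condition.

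For part (2) I would not repeat the computation but invoke the duality proposition just established, namely $\Gamma\in\mathcal{GDA}$ if and only if $\Pi-\Gamma=\{\pi-A,\pi-B,\pi-C\}\in\mathcal{GVA}$. Applying part (1) to the complementary set, the perimeter constraint $(\pi-A)+(\pi-B)+(\pi-C)\le 2\pi$ is exactly $\pi\le A+B+C$, which is the stated condition, while the three triangle inequalities for $\Pi-\Gamma$ become the reverse inequalities $A+B-C\le\pi$ (cyclically). Thus the angle-sum inequality is the representative constraint for dihedral angles in the same way the triangle inequality is for vertex angles.

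The step I expect to be delicate is matching each single displayed inequality against the genuinely several conditions that $\det\mathcal{M}_{CV\!\!A}\ge 0$ encodes. The matrix positivity is the conjunction of the three triangle inequalities with the perimeter bound $\alpha+\beta+\gamma\le 2\pi$ (dually, the angle-sum bound together with $A+B-C\le\pi$). The set-wise reading absorbs the three triangle inequalities into one, but the perimeter/angle-sum bound is a logically separate constraint, as the example $\alpha=\beta=\gamma$ close to $\pi$ shows; the honest statement is that it holds automatically for a nondegenerate solid vertex (the face angles of a trihedral angle always sum to less than $2\pi$, and dually for the dihedral angles). I would therefore spend the most care making precise that, under the nondegeneracy built into $\mathcal{GVA}$ and $\mathcal{GDA}$, the factor $\sin s$ never changes sign, so that the single displayed inequality is indeed equivalent to realizability.
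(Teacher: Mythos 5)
Your route is genuinely different from the paper's. The paper proves Proposition \ref{conditions} by rewriting the quadratic condition (\ref{trig va}) as the two-sided bound $\cos(\beta+\gamma)\leq\cos\alpha\leq\cos\left\vert \beta-\gamma\right\vert$ and then using monotonicity of cosine on $(0,\pi)$, handling part (2) by the substitution $-\cos A=\cos(\pi-A)$ inside the same sandwich. You instead reduce to $\det\mathcal{M}_{CV\!\!A}\geq0$ via Lemma \ref{matrix condition}, factor the determinant in the style of spherical Heron as $4\sin s\,\sin(s-\alpha)\sin(s-\beta)\sin(s-\gamma)$, and do sign bookkeeping, then obtain part (2) from the duality $\Gamma\in\mathcal{GDA}$ iff $\Pi-\Gamma\in\mathcal{GVA}$. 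That is essentially the alternative the paper itself only sketches afterwards, in the Factorization subsection around (\ref{one}). Your sign analysis is correct and in fact more informative than the paper's argument, because it displays all four necessary conditions simultaneously: the three triangle inequalities and the perimeter bound $\alpha+\beta+\gamma\leq2\pi$.

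However, your closing move is a genuine error. Having correctly observed that positivity of $\mathcal{M}_{CV\!\!A}$ is the conjunction of the triangle inequalities with the perimeter bound, and that the latter is independent of (\ref{tia}) (your own example $\alpha=\beta=\gamma$ near $\pi$), you then assert that ``under the nondegeneracy built into $\mathcal{GVA}$'' the single inequality (\ref{tia}) is equivalent to realizability. That cannot be repaired this way: nondegeneracy is a property of a realization, while the perimeter bound is a constraint on the candidate angle data. For $\alpha=\beta=\gamma=3\pi/4$ the inequality (\ref{tia}) holds, yet the Gram matrix has the eigenvalue $1+2\cos(3\pi/4)<0$, so no trivalent vertex --- degenerate or not --- realizes these angles and $\Gamma\notin\mathcal{GVA}$; dually, $A=B$ near $\pi$ with $C$ small satisfies (\ref{tid}) while $A+B-C>\pi$, so $\Gamma\notin\mathcal{GDA}$. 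Thus the ``if'' directions fail without the extra hypotheses you yourself isolated. It is worth noting that the paper's own proof only derives (\ref{tia}) from (\ref{trig va}) and never addresses the converse, so your factorization in effect explains why no converse argument exists for the statement as written. The honest repair is to add $\alpha+\beta+\gamma\leq2\pi$ to part (1) and the cyclic inequalities $A+B-C\leq\pi$ to part (2), not to appeal to nondegeneracy.
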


\begin{proof}
We start with (\ref{trig va}) and then pass to equivalent formulations:%

\begin{align*}
(\cos\alpha-\cos\beta\cos\gamma)^{2}  &  \leq(1-\cos^{2}\beta)(1-\cos
^{2}\gamma)\\
\left\vert \cos\alpha-\cos\beta\cos\gamma\right\vert  &  \leq\sin\beta
\sin\gamma\\
-\sin\beta\sin\gamma+\cos\beta\cos\gamma &  \leq\cos\alpha\leq\sin\beta
\sin\gamma+\cos\beta\cos\gamma\\
\cos\left(  \beta+\gamma\right)   &  \leq\cos\alpha\leq\cos\left\vert
\beta-\gamma\right\vert
\end{align*}

If $\gamma+\beta<\pi$ then the three angles in the previous line are in the
range $\left(  0,\pi\right)  $ where the cosine is monotone decreasing. In
that case the first inequality gives $\alpha\leq\beta+\gamma.$ In the other
case we have $\alpha\leq\pi\leq\beta+\gamma.$ In both cases we have (\ref{tia}).

For the dihedral angles we first note that $-\cos A=\cos\left(  \pi-A\right)
.$ We then follow the previous pattern and obtain
\[
\cos\left(  B+C\right)  \leq\cos\left(  \pi-A\right)  \leq\cos\left\vert
B-C\right\vert .
\]
Following the same analysis as before we obtain $\pi-A<B+C$ which is
equivalent to (\ref{tid}).
\end{proof}

These are elementary results in real hyperbolic geometry and certainly can be
given much more direct proofs. On the other hand this approach gives
interesting insight into how results such as Theorem \ref{Q 1} can be seen as
extensions to complex hyperbolic geometry of basic facts from real hyperbolic geometry.

\subsection{Factorization}

The expressions $\det\mathcal{M}_{-CDA}(X)$ and $\det\mathcal{M}%
_{CV\!\!A}(X)(X)$ arise in hyperbolic trigonometry where they are called
amplitudes. Among their many properties are trigonometric factorizations. The
formulas can be obtained by trigonometric analysis \cite[pg. 107]{F},
\cite[(2.88) - (2,94)]{J} or, as Roeder points out \cite{Roe}, by direct
computation with complex exponentials.

\begin{proposition}
Set
\begin{align*}
s  &  =(va_{23}+va_{24}+va_{34})/2\\
S  &  =(da_{23}+da_{24}+da_{34})/2
\end{align*}
then%
\begin{align}
&  1+2\cos va_{23}\cos va_{24}\cos va_{34}-\cos^{2}va_{23}-\cos^{2}%
va_{24}-\cos^{2}va_{34}\label{one}\\
&  \qquad\qquad=4\sin\left(  s\right)  \sin(s-va_{23})\sin(s-va_{24}%
)\sin(s-va_{34})\nonumber
\end{align}

\end{proposition}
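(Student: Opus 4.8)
The plan is to observe first that the left-hand side of (\ref{one}) is nothing but $\det\mathcal{M}_{CV\!\!A}(X)$: it is the real specialization of the expression appearing in condition (3) of Lemma \ref{matrix condition}, with $a_{1}=\cos va_{23}$, $a_{2}=\cos va_{24}$, $a_{3}=\cos va_{34}$, since for real entries $\operatorname{Re}(a_{1}\overline{a_{2}}a_{3})=\cos va_{23}\cos va_{24}\cos va_{34}$. Having made that identification, the proposition is a pure trigonometric identity in the three angles, and no geometry is needed. To keep the notation light I would write $A=va_{23}$, $B=va_{24}$, $C=va_{34}$ and let $E$ denote the left-hand side.

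The key step is to rewrite $E$ as a difference of two squares. Using $1-\cos^{2}C=\sin^{2}C$ and completing the square in $\cos A$, one passes through the intermediate form $E=\sin^{2}C-(\cos A-\cos B\cos C)^{2}-\cos^{2}B\,\sin^{2}C$ and then collects the two $\sin^{2}C$ terms to obtain
\[
E=\sin^{2}B\,\sin^{2}C-\bigl(\cos A-\cos B\cos C\bigr)^{2}.
\]
This single regrouping is where a little care with the algebra is required, but it is entirely elementary.

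Next I would factor the difference of squares and recognize each factor as a difference of cosines via $\cos(B\mp C)=\cos B\cos C\pm\sin B\sin C$:
\[
E=\bigl[\cos(B-C)-\cos A\bigr]\bigl[\cos A-\cos(B+C)\bigr].
\]
Applying the sum-to-product identity $\cos X-\cos Y=-2\sin\tfrac{X+Y}{2}\sin\tfrac{X-Y}{2}$ to each bracket and substituting $s=(A+B+C)/2$ gives $\cos(B-C)-\cos A=2\sin(s-B)\sin(s-C)$ and $\cos A-\cos(B+C)=2\sin s\,\sin(s-A)$; multiplying the two factors yields the claimed product $4\sin s\,\sin(s-va_{23})\sin(s-va_{24})\sin(s-va_{34})$.

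The argument is conceptually routine, and the only genuine obstacle is sign tracking in the sum-to-product step: two of the half-angle arguments emerge negated and must be flipped using $\sin(-t)=-\sin t$, which is exactly what turns the leading minus signs into the final plus sign. I would remark, as the paper notes following Roeder, that the same identity can instead be obtained by expanding every cosine into complex exponentials and collecting terms, but the difference-of-squares route above is shorter. Finally, the companion amplitude $\det\mathcal{M}_{-CDA}(X)$ is handled by the identical computation after replacing each $\cos va_{ij}$ by $-\cos da_{ij}=\cos(\pi-da_{ij})$, which is precisely the substitution underlying (\ref{trig da}) and the proof of Proposition \ref{conditions}.
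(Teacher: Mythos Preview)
Your proof is correct. The paper itself does not give a detailed argument for this proposition: it simply remarks that the formulas follow either by ``trigonometric analysis'' (citing Fenchel and Janson) or, following Roeder, by direct expansion into complex exponentials. Your difference-of-squares computation is exactly the kind of trigonometric analysis the paper alludes to, so you have supplied the details the paper omits rather than taken a different route.
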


and%

\begin{align}
&  1-2\cos da_{23}\cos da_{24}\cos da_{34}-\cos^{2}da_{23}-\cos^{2}%
da_{24}-\cos^{2}da_{34}\label{two}\\
&  \qquad\qquad=-4\cos\left(  S\right)  \cos(S-da_{23})\cos(S-da_{24}%
)\cos(S-da_{34})\nonumber
\end{align}
Given a tetrahedron $X\subset\mathbb{RH}^{n}$ the left hand side of
(\ref{one}) is $\det\mathcal{M}_{CV\!\!A}(X)$ and hence by (\ref{det}) must be
positive. We can use that as a starting point and study the signs of the
individual factors and derive (\ref{tia}). Similarly in \cite{Roe} Roeder
analyzes the sign of the factors in (\ref{two}) and obtains (\ref{tid}) for
acute angles $A,B,C.$

\section{Final Comments\label{final}}

\textbf{Cayley Equations: }If we replace the trigonometric variables in the
statements $\det\mathcal{M}_{-CDA}(X)=0$ and $\det\mathcal{M}_{CV\!\!A}%
(X)(X)=0$ with algebraic variables we obtain two algebraic equations:%
\begin{align}
1-2xyz-x^{2}-y^{2}-z^{2}  &  =0,\nonumber\\
p(x,y,z)=1+2xyz-x^{2}-y^{2}-z^{2}  &  =0. \label{22}%
\end{align}
These equations were studied by Cayley in his classic study of cubic equations
and sometimes carry his name \cite{H}. For us the region in $\mathbb{R}^{3}$
where the variables have absolute value at most one and $p(x,y,z)>0$
parameterizes nondegenerate tetrahedra in $\mathbb{RH}^{3}.$ The boundary
surface $\Omega,$ where $p(x,y,z)=0,$ correspond to degenerate tetrahedra. The
smooth points of $\Omega$ correspond to simple degenerations, degenerate
tetrahedra that become nondegenerate when a single vertex is moved a small
amount. The singular points of $\Omega$ correspond to more complicated,
nongeneric, degeneracies. For instance, let $T$ be a triangle $\left\{
w,y,z\right\}  $ in the ball model of $\mathbb{RH}^{3},$ $\mathbb{RB}_{3},$
which is in the plane $\mathbb{RH}^{2}$ specified by the vanishing of the
third coordinate and which has the origin of that plane in its interior. Form
tetrahedra $X_{\varepsilon}$ by adjoining a fourth vertex, which will be the
distinguished vertex $x_{1},$ with Euclidean coordinates $\left(
0,0,\varepsilon\right)  $ for a small positive $\varepsilon.$ The
$X_{\varepsilon}$ are proper tetrahedra but the limiting $X_{0}$ is a
degenerate tetrahedron$.$ The vertices of $X_{0}$ are all in the plane
$\mathbb{RH}^{2},$ and include the origin of that plane as the distinguished
vertex. Thus for $X_{0}$ the values of $\operatorname*{kos}_{1}$ are the
cosines of the angles formed by connecting the origin to the other vertices.
We are in a plane so those angles sum to 2$\pi.$ Thus the corresponding
$\left(  x,y,z\right)  $ values are $\left(  \cos\alpha,\cos\beta,\cos\left(
2\pi-\alpha-\beta\right)  \right)  $ for some angles $\alpha$ and $\beta.$
These values satisfy (\ref{22}), as can be checked by noting that equality
holds in (\ref{cauchy 3}). That point is a smooth point on the surface
$\Omega$ and the degeneracy of $X_{0}$ can be removed by moving the vertex at
the origin slightly to obtain an $X_{\varepsilon}.$

The singular points of $\Omega$ are the points ($\pm1,\pm1,\pm1)$ with an even
number of minus signs. The corresponding tetrahedra have four points on a
single real geodesic. Those tetrahedra are have nongeneric degeneracy, they
remain degenerate if any of the points is moved slightly.

Some related discussion is in \cite{H}.

\textbf{Larger Polyhedra: }A polyhedron in $\mathbb{RH}^{n}$ is determined by
a set of vertex points together with combinatoric data telling which vertices
are connected by edges, which edges bound faces, etc. For tetrahedra the
combinatorics are trivial, every pair of vertices bounds an edge, every triple
of edges bound a face. Hence the study of tetrahedra is essentially equivalent
to the study of the geometry of four point sets. Also, for four point sets we
obtained relatively clean results because the dimensions were small.

For larger polyhedra we obtain information that is more complicated. This is
seen for instance in Theorem \ref{v3}. Also the information from Theorem
\ref{general} and related results says nothing about the polyhedral
combinatorics the set might have. It would be interesting if there were a
natural way to encode the polyhedral structure as a layer of structure in the
Hilbert spaces $DA(X).$

The appropriate definition of a polyhedron in $\mathbb{CH}^{n}$ is not clear.
An indication that the situation is very different from $\mathbb{RH}^{n}$ can
already be seen with triangles. Any three points in $\mathbb{RH}^{n}$ are
contained in a totally geodesically embedded $\mathbb{RH}^{2}$ and that allows
a natural definition of the face bounded by the geodesics connecting the
vertices. However there is no similar construction for triples of points in
$\mathbb{CH}^{n}$, in particular there is no natural notion of the face of a
triangle. This shows up, for instance, in the fact that symplectic area of a
triangle, a substitute for classically defined area, is defined in a way that
is independent of the choice of real two manifold joining the geodesic edges.

(Some thought has been given to defining and describing polyhedra in
$\mathbb{C}^{n}$ \cite{Co} but that work does not seem directly relevant here.)

\textbf{Vertices at Infinity:} The study of tetrahedra in real hyperbolic
space, and to some extent in complex hyperbolic space, is not restricted to
classical bounded tetrahedra but also includes consideration of ideal
tetrahedra, tetrahedra with one or more vertices in the ideal boundary (i.e.
the "sphere at infinity", $\partial\mathbb{B}_{n}$). Although some of the
previous discussion extends to those contexts it is not clear if there are
spaces similar to the $DA(X)$ which are useful in studying those geometries.
It may be that the adjoining ideal boundary points to hyperbolic space is
analogous to adjoining vectors of infinite norm to the $DA(X).$

There is analysis of congruence of finite sets in the closure $\overline
{\mathbb{CH}^{n}}$ in several places including \cite{Go}, \cite{HS}, \cite{G},
and \cite{CG}.

\textbf{The Physics Literature:} The question of characterizing triples of
triangles in $\mathbb{RH}^{3}$ which can be assembled as part of a tetrahedron
is also studied in the physics literature, sometimes with the name "closure
questions", for instance \cite{CL}, \cite{BDGL}, \cite{HHR}. In contrast to
the work here, those papers make substantial use of the descriptive and
analytical properties of the automorphism group.

\textbf{Other Hilbert Spaces, Other Geometries:} The relation between RKHSI
and geometry extends to general RKHSI and includes relations to other
classical geometries. We have focused on the relation between the $DA$ spaces
and hyperbolic geometry \ There are similar relations between the
Segal--Bargmann--Fock spaces and the Hermitian geometry of $\mathbb{C}^{n},$
and between the Hilbert spaces of spin coherent states \cite{BZ}, and the
geometry of complex spheres and projective spaces.

More general relations between geometry and spaces such as $DA(X)$ for $X$ in
$\mathbb{RH}^{n}$ and $\mathbb{CH}^{n}$ are suggested by the work in \cite{M}.

\end{document}